\def\temp{&} \catcode`&=\active \let&=\temp
\newcommand{\TAB}{\quad}
\newcommand{\from}{\leftarrow}
\newcommand{\NOT}{\centernot}
\newcommand{\dd}{\,\mathrm{d}}
\newcommand{\CSTAR}{\ensuremath{C^*}}
\newcommand{\STARHOMS}{$*$-homomorphisms}
\DeclareMathSymbol{\mlq}{\mathord}{operators}{'134}
\DeclareMathSymbol{\mrq}{\mathord}{operators}{'42}
\newcommand{\mathquote}[1]{\mlq #1 \mrq}
\newcommand{\BLANK}{{-}}
\newenvironment{TIKZCD}{\[\begin{tikzcd}}{\end{tikzcd}\]\ignorespacesafterend}
\newenvironment{smalltikzcd}{\begin{tikzcd}[column sep=small]}{\end{tikzcd}}
\newcommand{\SHORTTIKZCD}[1]{\begin{tikzcd} #1 \end{tikzcd}}
\newcommand{\MATRIX}[1]{\left(\hskip\arraycolsep\begin{matrix}#1\end{matrix}\hskip\arraycolsep\right)}
\newcommand{\mediumMATRIX}[1]{
  \begingroup
  \setlength\arraycolsep{2pt}
  \renewcommand*{\arraystretch}{0.6}
  \left(\hskip\arraycolsep\begin{matrix}#1\end{matrix}\hskip\arraycolsep\right)
  \endgroup}
\newcommand{\rowMATRIX}[1]{
    \begingroup
    \setlength\arraycolsep{1pt}
    \renewcommand*{\arraystretch}{0.6}
    \left(\hskip\arraycolsep\begin{matrix}#1\end{matrix}\hskip\arraycolsep\right)
    \endgroup}
\renewcommand{\subset}{\subseteq}
\renewcommand{\supset}{\supseteq}
\renewcommand{\phi}{\varphi}
\renewcommand{\epsilon}{\varepsilon}
\renewcommand{\emptyset}{\varnothing}
\newcommand{\xRightarrow}[2][]{\ext@arrow 0359\Rightarrowfill@{#1}{#2}}
\newcommand{\NATURALS}{\mathbb{N}}
\newcommand{\INTEGERS}{\mathbb{Z}}
\newcommand{\RATIONALS}{\mathbb{Q}}
\newcommand{\REALS}{\mathbb{R}}
\newcommand{\COMPLEX}{\mathbb{C}}
\newcommand{\TORUS}{\mathbb{T}}
\newcommand{\inv}{^{-1}}
\DeclareMathOperator{\coker}{coker}
\DeclareMathOperator{\im}{im}
\DeclareMathOperator{\coequalizer}{coeq}
\newcommand{\ketbra}[2]{\mathord{|#1\rangle\!\langle #2|}}
\DeclareMathOperator{\supp}{supp}
\DeclareMathOperator*{\CLOSURE}{closure}
\DeclareMathOperator*{\CLOSESPAN}{\overline{span}}
\DeclareMathOperator{\ver}{ver}
\DeclareMathOperator{\edges}{edges}
\DeclareMathOperator{\her}{her}
\DeclareMathOperator{\source}{source}
\newcommand{\SPACE}{\mathrm{space}}
\newcommand{\vertices}{\mathrm{vertices}}
\newcommand{\hereditary}{\mathrm{hereditary}}
\newcommand{\regular}{\mathrm{regular}}
\newcommand{\singular}{\mathrm{singular}}
\newcommand{\sources}{\mathrm{sources}}
\newcommand{\finRECEIVERS}{\mathrm{fin~receivers}}
\DeclareMathOperator{\range}{range}
\DeclareMathOperator{\reg}{reg}
\DeclareMathOperator{\cov}{cov}
\DeclareMathOperator{\pos}{pos}
\newcommand{\ILEQ}{\trianglelefteq}
\newcommand{\act}{\curvearrowright}
\newcommand{\DISK}{\mathbb{D}}
\newcommand{\ADJOINTS}{\mathcal{L}}
\newcommand{\COMPACTS}{\mathbb{K}}
\newcommand{\FOCK}{\mathcal{F}}
\newcommand{\TOEPLITZ}{\mathcal{T}}
\newcommand{\PIMSNER}{\mathcal{O}}
\newcommand{\CONTINUOUS}{C}
\newcommand{\tensor}{\mathbin{\otimes}}
\tikzset{every loop/.style={ distance=1cm, out=120, in=60, -> }}
\tikzset{help lines/.style={very thin, color=lightgray, dashed}}
\tikzset{axis lines/.style={very thin, color=lightgray}}
\def\mathcolor#1#{\@mathcolor{#1}}
\def\@mathcolor#1#2#3{%
  \protect\leavevmode
  \begingroup
    \color#1{#2}#3%
  \endgroup}
\let\originalleft\left
\let\originalright\right
\renewcommand{\left}{\mathopen{}\mathclose\bgroup\originalleft}
\renewcommand{\right}{\aftergroup\egroup\originalright}
\newtheoremstyle{MYBREAK}% name
  {}          % Space above, empty = `usual value'
  {}          % Space below
  {\itshape}  % Body font
  {}          % Indent amount (empty = no indent, \parindent = para indent)
  {\bfseries} % Thm head font
  {:}         % Punctuation after thm head
  {\newline}  % Space after thm head: \newline = linebreak
  {}          % Thm head spec
\newtheoremstyle{MYPLAIN}
  {\topsep}   % ABOVESPACE
  {\topsep}   % BELOWSPACE
  {\itshape}  % BODYFONT
  {15pt}          % INDENT (empty value is the same as 0pt)
  {\bfseries} % HEADFONT
  {}         % HEADPUNCT
  {5pt plus 1pt minus 1pt} % HEADSPACE
  {}          % CUSTOM-HEAD-SPEC
\theoremstyle{definition}
\newtheorem{DEF}{Definition}[section]
\theoremstyle{plain}
\newtheorem{PROP}[DEF]{Proposition}
\newtheorem{LMM}[DEF]{Lemma}
\newtheorem{THM}[DEF]{Theorem}
\newtheorem{COR}[DEF]{Corollary}
\newtheorem{EXM}[DEF]{Example}
\theoremstyle{MYBREAK}
\newtheorem{BREAKPROP}[DEF]{Proposition}
\newtheorem{BREAKLMM}[DEF]{Lemma}
\newtheorem{BREAKTHM}[DEF]{Theorem}
\newtheorem{BREAKCOR}[DEF]{Corollary}
\newtheorem{BREAKREM}[DEF]{Remark}
\newtheorem{BREAKEXM}[DEF]{Example}
\title{Relative Cuntz--Pimsner algebras:\\Classification of gauge-invariant ideals:\\a simple and complete picture}
\author{Alexander Frei}
\date{\today}
\begin{document}
\maketitle

%%%%%%%%%%%%%%%%
%%  Abstract  %%
%%%%%%%%%%%%%%%%
We give a simple and complete picture on the classification of relative \mbox{Cuntz--Pimsner algebras}
(and so also of gauge--equivariant representations)
using their intuitive \textbf{parametrisation by kernel--covariance pairs.}

For this we first present a \textbf{classification of kernel and cokernel} morphisms (in the general category of correspondences)
which builds on the concept of invariant ideals as realised and coined by Kajiwara--Pinzari--Watatani (and implicitely by Pimsner).
The existence of all such kernel and cokernel morphisms then enable us to reduce the general classification problem to the faithful case of correspondences within ambient operator algebras.

The second component arises from an \textbf{observation made by Katsura}:\linebreak
We unravel Katsura's observation as an obstruction on the range of covariance ideals for correspondences embedded in ambient operator algebras, which comprises the second component of kernel--covariance pairs:
As such our parametrisation runs over \textbf{invariant ideals} (as a discrete range of kernel ideals)
and on the other hand over \textbf{bounded ideals} below some maximal covariance (as an upper bound on the range of covariance ideals).

We then illustrate the \textbf{lattice of relative Cuntz--Pimsner algebras} (and so also of every gauge--equivariant representation)
along the range of kernel--covariance pairs.
Following, we provide the general version of the gauge-invariant uniqueness theorem by its reduction to the faithful case,\linebreak
for which we further recall a \textbf{simplified proof by Evgenios Kakariadis.}

This establishes the first half in our classification:
Every gauge--equivariant representation arises as a relative Cuntz--Pimsner algebra (for its own kernel--covariance pair) and whence the class of gauge--equivariant representations coincides with the class of relative Cuntz--Pimsner algebras. As such the kernel--covariance pairs \textbf{exhaust the gauge--equivariant representations.}

For the second half in our classification we aim to uniquely determine the relative \mbox{Cuntz--Pimsner algebras} by their parametrising kernel--covariance pairs:
More precisely, we will recover every abstract kernel--covariance pair as the actual kernel and covariance from its relative Cuntz--Pimsner algebra, and as such \textbf{our kernel--covariance pairs are also classifying.}

With our found classification by kernel--covariance pairs we then further investigate the lattice structure of gauge--equivariant representations.\linebreak
In particular, we elaborate the existence of \textbf{connecting morphisms} between cokernel strands (given by the kernel component of kernel--covariance pairs) and illustrate our results on examples of graph algebras.

Along this discussion we further clarify Katsura's description (using T-pairs) as a simple translation of kernel--covariance pairs
(which had been already covered by Katsura himself)
with the second component however not taken and further pursued as describing the range of covariance ideals.

Altogether \textbf{our classification is a simple reduction} of the gauge--invariant uniqueness theorem (along cokernel morphisms) together with the identification of kernel and covariance for relative Cuntz--Pimsner algebras.

Finally we provide a realisation of relative Cuntz--Pimsner algebras as absolute Cuntz--Pimsner algebra by maximal dilation and reveal Katsura's construction as the canonical dilation given by the maximal covariance.\linebreak
We further reveal the canonical dilation as a familiar construction from graph algebras and provide an example to illustrate \textbf{the lack of minimal dilations.}

\vspace{0.5\baselineskip}

As an application we provide a systematic approach for an earlier pullback result by Robertson--Szymanski which we extend to the general context in upcoming work \textbf{with Mariusz Tobolski and Piotr~M.~Hajac.}

%%%%%%%%%%%%%%%%%%%%%
%%  Preliminaries  %%
%%%%%%%%%%%%%%%%%%%%%
%%%%%%%%%%%%%%%%%%%%%%%
%%  Correspondences  %%
%%%%%%%%%%%%%%%%%%%%%%%
\section{Correspondences}
\label{sec:CORRESPONDENCES}

We begin with an introduction to correspondences and their representations.
In particular, we provide a less formal and more abstract angle.
This seeks to help to understand their gauge-equivariant representations from an abstract perpsective --- and so also to classify the entire lattice of gauge-invariant ideals.
Further, this allows one to better understand dilations and in further work the shift equivalence problem from an abstract angle.
A Hilbert module is a right module over an operator algebra (seen as coefficient algebra) that comes equipped with a pairing (compatible with the coefficient algebra) which renders the right module complete with respect to the induced norm:
\[
  \braket{\BLANK|\BLANK}: X\times X\to A:\TAB\|x\|^2:=\|\braket{x|x}\|.
\]
Given a pair of Hilbert modules over a common coefficient algebra one may introduce the notion of adjointable operators as those which admit an adjoint:
\[
  T:X\to Y,\TAB T^*:Y\to X:\TAB\braket{T\BLANK|\BLANK}=\braket{\BLANK|T^*\BLANK}.
\]
We note that such adjointable operators are automatically continuous which may be seen most easily via the closed graph theorem.
Moreover the class of adjointable operators over pairs of Hilbert modules defines a \enquote{consistent system} of Banach spaces with composition and involution
\[
  \ADJOINTS(Y|Z)\circ\ADJOINTS(X|Y) \subset\ADJOINTS(X|Z),\TAB\ADJOINTS(X|Y)^*=\ADJOINTS(Y|X)
\]
satisfying the generalized \CSTAR-identity
\[
  T\in\ADJOINTS(X|Y):\TAB\|T\|^2=\|T^*T\|.
\]
The vertical separators hereby seek to convey the \enquote{Dirac bra--ket notation},
that is we have the following identification which we term Dirac calculus:
\begin{PROP}\label{DIRAC-CALCULUS}
  The identification (and its conjugate)
  \begin{gather*}
    X=\ket X\subset\ADJOINTS(A|X),\TAB X^*=\bra X\subset\ADJOINTS(X|A):\\[2\jot]
    \ket{x}a:= \ket{xa},\TAB\bra{x}y:= \braket{x|y}:\TAB \bra{x}^*=\ket{x}
  \end{gather*}
  define an isometric embedding (and its conjugate),\\
  which renders the pairing as
  \[
    \braket{x|y}=\bra{x}\circ\ket{y}\in\ADJOINTS(X|A)\circ\ADJOINTS(A|X).
  \]
  This further renders the notion of compact operators as
  \[
    \COMPACTS(X,Y):=\CLOSESPAN\ket{Y}\bra{X}\subset\ADJOINTS(X|Y)
  \]
  and the notion of adjointable operators as
  \[
    Tx=T\circ\ket{x},\TAB\braket{Tx|y}=\bra{x}\circ T^*\circ\ket{y}=\braket{x|T^*y}.
  \]
  All of the above enables one to split expressions as composition of operators.
\end{PROP}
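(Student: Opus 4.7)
Everything reduces to a single verification: that for each $x \in X$ the right-multiplication map $\ket{x}\colon A \to X$, $a \mapsto xa$, is adjointable with adjoint given by $\bra{x}\colon X \to A$, $y \mapsto \braket{x|y}$. The first step is therefore to check, by sesquilinearity and the standard pairing $\braket{a|b} = a^*b$ on $A$ viewed as a Hilbert module over itself, the one-line identity $\braket{\ket{x}a | y} = \braket{xa | y} = a^*\braket{x|y} = \braket{a | \bra{x}y}$. This places both $\ket{\BLANK}$ and $\bra{\BLANK}$ inside the adjointable operators and yields $\bra{x}^* = \ket{x}$ automatically.

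Next I would verify that the map $x \mapsto \ket{x}$ is isometric. The composite $\bra{x}\ket{x}\colon A \to A$ sends $a \mapsto \bra{x}(xa) = \braket{x|x}a$, which is left multiplication by $\braket{x|x} \in A$ and therefore has operator norm $\|\braket{x|x}\|$. The generalized $C^*$-identity then gives $\|\ket{x}\|^2 = \|\ket{x}^*\ket{x}\| = \|\braket{x|x}\| = \|x\|^2$, and isometry of $\bra{\BLANK}$ follows by taking adjoints. The pairing formula $\braket{x|y} = \bra{x}\circ\ket{y}$ is now just the first step read under the canonical identification $A \cong \ADJOINTS(A|A)$ by left multiplication.

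For the remaining two identifications I would unwind definitions. The standard rank-one operators $\theta_{y,x}\colon z \mapsto y\braket{x|z}$ are literally the composites $\ket{y}\bra{x}$, so $\COMPACTS(X,Y) = \CLOSESPAN\ket{Y}\bra{X}$ is just a rewriting of the definition of compacts. For $T \in \ADJOINTS(X|Y)$, the $A$-linearity $T(xa) = (Tx)a$ is exactly the equality of maps $T\circ\ket{x} = \ket{Tx}$ from $A$ to $Y$; composing this equality with $\bra{y}$ on one side and taking adjoints on the other then gives the displayed formula $\braket{Tx|y} = \bra{x}\circ T^* \circ \ket{y}$ by associativity of composition.

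The whole statement is essentially bookkeeping and I do not expect any substantive obstacle. The only conceptual point --- and, mildly, the main thing to get right --- is to regard the scalars $\braket{x|y} \in A$ uniformly as composable operators $A \to A$ via left multiplication, so that the sesquilinear pairing becomes associative composition inside the single consistent system $\ADJOINTS(\BLANK|\BLANK)$. Once this identification is fixed, each clause of the proposition reduces to a one-line verification.
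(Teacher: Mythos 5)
Your proposal is correct and follows essentially the same route as the paper: view $A$ as a Hilbert module over itself with pairing $a^*b$, verify the mutual adjoint relation $\bra{x}^*=\ket{x}$ by the one-line sesquilinearity computation, and get isometry from the generalized \CSTAR-identity $\|\ket{x}\|^2=\|\bra{x}\circ\ket{x}\|=\|\braket{x|x}\|$. You additionally spell out the compact/adjointable clauses that the paper leaves as an exercise, and those verifications are fine.
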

\begin{proof}
  We first note that the coefficient algebra itself defines a Hilbert module:
  \[
    \braket{\BLANK|\BLANK}:A\times A\to A:\TAB\braket{x|y}=x^*y.
  \]
  One may now verify that the assignments define mutual adjoints
  \[
    \bra{x}^*=\ket{x}\in\ADJOINTS(A|X):\TAB
    \big\langle\braket{x|y}\big|a\big\rangle = \ldots = \big\langle{y}\big|xa\big\rangle = \big\langle{y}\big|\ket{x}a\big\rangle
  \]
  and that the identification defines an isometric embedding
  \[
    X=\ket{X}\subset\ADJOINTS(A|X):\TAB
    \|\ket{x}\|^2=\|\bra{x}\circ\ket{x}\|=\ldots=\|\braket{x|x}\|.
  \]
  We leave these as an instructive exercise for the reader.
\end{proof}
With this identification at hand, a Hilbert module conveniently reads as nothing but a right module together with an abstract pairing given by involution
\[
  (X\curvearrowleft A)\TAB XA\subset X,\TAB\TAB(X\times X\to A)\TAB X^*X\subset A
\]
which we from now on simply indicate as such formal inclusions.\\
We may now turn our attention to the notion of \CSTAR-correspondence:
Formally these are given as Hilbert modules together with a representation of the coefficient algebra as adjointable operators.
With the viewpoint from above we however obtain the alternative description as a bimodule over the coefficient algebra together with some compatible inner product pairing
\begin{equation}
  \label{CORRESPONDENCE=BIMODULE}
  AX\subset X,\TAB\TAB XA\subset X,\TAB\TAB X^*X\subset A
\end{equation}
satisfying some relations such as (now evident in Dirac formalism)
\[
  (ax)^*y=x^*(a^*y),\TAB\TAB x^*(ya) = (x^*y)a,\TAB\TAB\mathrm{etc.}
\]
We meanwhile note that the notion of correspondences has an intrinsic asymmetry by the pairing.
More precisely, the more traditional notion of Hilbert modules comes equipped with a dual pairing which renders the notion symmetric:
\begin{gather*}
  X^*X\subset A,\TAB\TAB XA\subset X,\\
  XX^*\subset A,\TAB\TAB AX\subset X.
\end{gather*}
In fact, this covers the main objective of covariant representations. We return to this aspect in the following section.
We continue with a swift introduction to the (internal) tensor product of correspondences.
Using our Dirac formalism from above we may now simply introduce those as formal powers such as
\[
  A(XY\ldots Z)\subset (AX)Y\ldots Z,\TAB\TAB (XY\ldots Z)A\subset XY\ldots(ZA)
\]
and where the inner product pairing now simply reads
\begin{gather*}
  (XY\ldots Z)^*(XY\ldots Z)
  = Z^*\ldots Y^*\Big(X^*X\subset A\Big)Y\ldots Z\\
  \subset Z^*\ldots\Big(Y^*Y\subset B\Big)\ldots Z\subset\ldots\subset\Big(Z^*Z\subset A\Big).
\end{gather*}
Moreover, this automatically entails the balanced relation as for example
\begin{gather*}
  \Big((xa)y-x(ay)\Big)^*\Big((xa)y-x(ay)\Big) = \\
  = y^*(a^*x^*)(xa)y - \ldots + (y^*a^*)(x^*x)(ay) = 0.
\end{gather*}

%%%%%%%%%%%%%%%%%%%%%%%%%%%%%%%%%%%%
%%  GRAPH EXAMPLE: TENSOR POWERS  %%
%%%%%%%%%%%%%%%%%%%%%%%%%%%%%%%%%%%%
Let us give an illustrative example for such tensor products:
\begin{BREAKEXM}[Graph correspondences: Tensor powers]
  \label{GRAPH-CORRESPONDENCES:TENSOR-POWERS}
  Consider a directed graph and regard its graph correspondence
  \[
    X=\ell^2\Bigl(~E=\edges~\Bigr),\TAB A=c_0\Bigl(~V=\mathrm{vertices}~\Bigr)
  \]
  with action and pairing given by range and source say
  \[
    \begin{gathered}
    \begin{tikzcd}[row sep=huge, column sep=huge]
      a\rar{z} & b\arrow[loop,"w"]
    \end{tikzcd}
    \end{gathered}:\TAB\TAB
    \begin{gathered}
      z^*z=a,\TAB za=z=bz,\\
      z^*z=b,\TAB wb=w=bw.
    \end{gathered}
  \]
  On the other hand we know by \cite{KALISZEWSKI-PATANI-QUIGG} that every nondegenerate correspondence over a \enquote{direct sum over a discrete set as vertices} arises as a graph correspondence:
  \[
    A=c_0\Big(\mathrm{vertices}=\mathrm{some~set}\Big)\implies X=\ell^2(\edges).
  \]
  Indeed this follows by some simple counting argument:
  \[
    a,b\in\mathrm{vertices}:\TAB bXa = \ell^2\Big(\edges:b\from a\Big)=\ell^2(bEa).
  \]
  From this argument it furthermore follows that degenerate correspondences arise the same way when allowing for edges with heads pointing into the void such as
  \begin{TIKZCD}[row sep=1cm/4]
    \phantom{\emptyset} &[-0.5cm]&&[-0.5cm] \phantom{\emptyset}\\
    & a\rar{z}\ular\dlar & b\arrow[loop, out=120, in=60, distance=1cm,"w"]\drar\\
    \phantom{\emptyset}&&&\phantom{\emptyset}
  \end{TIKZCD}
  and furthermore also any power of a graph correspondence arises as a graph correspondence itself and indeed is given by its paths of according length:
  \[
    XX\ldots X=\ell^2(EE\ldots E),\TAB A=c_0(\mathrm{vertices}).
  \]
  In other words, that is by concatenation of edges.
\end{BREAKEXM}

As such the tensor product may be seen as nothing but a formal power in words and one may equivalently consider also mixed powers with the dual space and its dual pairing (though formally only in the context of operator spaces).

We finish this section with an intrinsic characterization of Hilbert bimodules.
For this we first require the following well-known observation:

\begin{PROP}\label{MAXIMAL-FAITHFUL-ACTION}
  Consider a correspondence (in Dirac braket notation)
  \[
    \braket{X|X}\subset A,\TAB XA\subset X,\TAB AX\subset A
  \]
  and regard the orthogonal complement for the kernel (by left action),
  \begin{gather*}
    \ker(A\act X)=\{aX=0\}\subset A:\\[2\jot]
    \ker(A\act X)^\perp = \{a\ker(A\act X)=0\}\subset A.
  \end{gather*}
  This defines the largest ideal that renders the left action faithful.\\
  In particular one may simultaneously identify the orthogonal complement with its isometric image (by left action) within the space of adjointable operators.
\end{PROP}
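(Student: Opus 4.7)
The plan is to set $K := \ker(A \act X)$ and to show that $K^\perp = \{a \in A : aK = 0\}$ is both faithful (its left action on $X$ has trivial kernel) and maximal among ideals of $A$ with this property. Both halves rest on the standard $C^*$-algebraic fact that any closed two-sided ideal $K \ILEQ A$ satisfies $K \cap K^\perp = 0$, which I would verify in two lines: if $a \in K \cap K^\perp$, then $a^* \in K$ as well (ideals are self-adjoint), so $aa^* \in aK = 0$, forcing $\|a\|^2 = 0$.

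First I would establish faithfulness of the restricted action $K^\perp \act X$: if $a \in K^\perp$ satisfies $aX = 0$, then by definition $a \in K$, hence $a \in K \cap K^\perp = 0$. Next, to obtain maximality, I would pick any ideal $J \ILEQ A$ on which the left action is faithful and consider $J \cap K$: as a subset of $K$ it acts as zero on $X$, while simultaneously (by faithfulness of $J \act X$) acting injectively, whence $J \cap K = 0$. Because $J$ and $K$ are two-sided ideals, this upgrades to $JK \subset J \cap K = 0$, which is exactly the statement $J \subset K^\perp$.

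The last sentence of the proposition — the isometric identification of $K^\perp$ with its image inside $\ADJOINTS(X)$ — then follows automatically: the restricted left action $K^\perp \to \ADJOINTS(X)$ is an injective $*$-homomorphism between $C^*$-algebras, and any such map is isometric.

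I do not anticipate a genuine obstacle here; the statement is a clean packaging of the annihilator construction for ideals in a $C^*$-algebra. The one spot where care is needed is the identity $K \cap K^\perp = 0$, and even that reduces to recalling that ideals in a $C^*$-algebra are $*$-closed. Once this is in place, both the faithfulness and the maximality drop out by purely formal manipulations with two-sided ideals.
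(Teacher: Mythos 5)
Your proposal is correct and follows essentially the same route as the paper: both arguments reduce faithfulness of an ideal $J$ to the condition $J\cap\ker(A\act X)=0$, identify this with $J\ker(A\act X)=0$, i.e.\ $J\subset\ker(A\act X)^\perp$, and rely on the self-adjointness of closed two-sided ideals to get $K\cap K^\perp=0$. The only cosmetic difference is that the paper pauses to verify explicitly that $\ker(A\act X)$ is a closed, two-sided, self-adjoint ideal (via the inner product), whereas you take these routine facts as given; the final isometry claim via injectivity of a \STARHOM{} is likewise the intended argument.
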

\begin{proof}
  Before we begin with the actual argument let us make the useful observation that the kernel defines an ideal (closed and two-sided) since
  \begin{gather*}
    A\ker(A\act X)X=0,\TAB \ker(A\act X)AX=0,\\[2\jot]
    \overline{\ker(A\act X)}X\subset\overline{\ker(A\act X)X}=0
  \end{gather*}
  and so also a selfadjoint one (which one may also easily observe by hand)
  \[
    \Big\langle X\Big|\ker(A\act X)^*X\Big\rangle = \Big\langle \ker(A\act X)X\Big|X\Big\rangle=0\implies \ker(A\act X)^*X=0.
  \]
  This kernel determines now precisely those ideals that render the action faitfhul:
  \[
    K\ker(A\act X)=K\cap\ker(A\act X)=0\iff K\restriction X=0.
  \]
  As such its orthogonal complement defines the largest such ideal:
  \[
    K\ker(A\act X) = 0\iff K\subset\ker(A\act X)^\perp.
  \]
  Note that the orthogonal complement (as we have defined from the left only) defines itself an ideal (closed and two-sided)
  \begin{gather*}
    A\ker(\ldots)^\perp\ker(\ldots)=0,\TAB \ker(\ldots)^\perp A\ker(\ldots)=0,\\[2\jot]
    \overline{\ker(\ldots)^\perp}\ker(\ldots)\subset \overline{\ker(\ldots)^\perp\ker(\ldots)}0
  \end{gather*}
  and so also a selfadjoint one (which this time is not too obvious).
\end{proof}

Building on this, we may now further give an \textbf{intrinsic characterization} of Hilbert bimodules
(which we promote as theorem since it's widely non-standard even until now) and the author would like to thank \textbf{Adam Skalski and Bartosz Kwasniewski} for helping with the key detail for this idea:

\begin{THM}\label{HILBERT-BIMODULES}
  Consider a correspondence as seen in \eqref{CORRESPONDENCE=BIMODULE}\\
  and regard the orthogonal complement from Proposition \ref{MAXIMAL-FAITHFUL-ACTION}
  \[
    \ker(A\act X)^\perp = \{a\ker(A\act X)=0\}\subset A
  \]
  and simultaneously identified as in that proposition by their action
  \[
    A\supset\ker(A\act X)^\perp=\Bigl(~\ker(A\act X)^\perp\act X~\Bigr)\subset \ADJOINTS(X|X).
  \]
  Then the correspondence defines a Hilbert bimodule if and only if the compact operators (see Proposition \ref{DIRAC-CALCULUS}) all lie within the orthogonal complement
  \[
    \ketbra{X}{X}\subset
    \ker(A\act X)^\perp\subset\ADJOINTS(X|X)
  \]
  while the dual pairing is always given by
  \[
    \ketbra{\BLANK}{\BLANK}:X\times X\to A:\TAB \ketbra{X}{X}\subset\ker(A\act X)^\perp\subset A
  \]
  whence there is no ambiguity left anymore.
  As such the notion of Hilbert bimodules \textbf{defines a pure property} without \textbf{any additional structure.}
\end{THM}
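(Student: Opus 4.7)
The plan is to pin down the left $A$-valued pairing of a Hilbert bimodule by forcing it to agree with the rank-one compact operators acting on $X$, and then read off both directions from Proposition \ref{MAXIMAL-FAITHFUL-ACTION}.

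For the \emph{only if} direction, suppose $X$ carries a Hilbert bimodule structure with left pairing which I temporarily denote $[x,y]\in A$. The defining compatibility with the right pairing reads
\[
  [x,y]\cdot z = x\braket{y|z} = \ket{x}\bra{y}z,
\]
so $[x,y]\in A$ acts on $X$ exactly as the rank-one compact operator $\ketbra{x}{y}\in\ADJOINTS(X|X)$. Any two elements of $A$ acting identically on $X$ differ by an element of $\ker(A\act X)$, and by Proposition~\ref{MAXIMAL-FAITHFUL-ACTION} the orthogonal complement $\ker(A\act X)^\perp$ is the largest ideal on which the action is faithful, hence serves as a canonical splitting. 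Uniqueness of the pairing then forces $[x,y]\in\ker(A\act X)^\perp$, and under the simultaneous identification we obtain $\ketbra{x}{y}\in\ker(A\act X)^\perp\subset\ADJOINTS(X|X)$. Closure then yields the full inclusion $\COMPACTS(X,X)=\CLOSESPAN\ketbra{X}{X}\subset\ker(A\act X)^\perp$.

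For the \emph{if} direction, assume $\ketbra{X}{X}\subset\ker(A\act X)^\perp$ under the identification. Faithfulness of the action on $\ker(A\act X)^\perp$ (Proposition \ref{MAXIMAL-FAITHFUL-ACTION}) makes the left action an isometric embedding $\ker(A\act X)^\perp\hookrightarrow\ADJOINTS(X|X)$ whose image, by assumption, contains every rank-one compact $\ketbra{x}{y}$. Hence for each pair $x,y$ there is a \emph{unique} element of $\ker(A\act X)^\perp\subset A$ whose left action is $\ketbra{x}{y}$; declare this element to be $\ketbra{x}{y}\in A$. The bimodule axioms (selfadjointness $\ketbra{x}{y}^*=\ketbra{y}{x}$, bilinearity, the compatibility $\ketbra{x}{y}\cdot z = x\braket{y|z}$, and the cross relation $\braket{x|\ketbra{y}{z}w}=\braket{x|y}\braket{z|w}$) then all hold formally, since both sides are computed inside $\ADJOINTS(X|X)$ where they agree on inspection in Dirac calculus.

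The main obstacle is not a computation but the conceptual observation that makes the theorem work: the bimodule structure is a \emph{property} rather than extra data precisely because faithfulness on $\ker(A\act X)^\perp$ collapses the a priori freedom in choosing a left pairing to a single element. Once one identifies $\ker(A\act X)^\perp$ simultaneously as an ideal of $A$ and as a subalgebra of $\ADJOINTS(X|X)$, the pairing $\ketbra{\BLANK}{\BLANK}$ has a single unambiguous meaning, so the theorem amounts to the statement that compatibility of the two pairings is equivalent to the compacts landing in this intrinsically defined subspace.
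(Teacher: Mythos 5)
Your \emph{if} direction is essentially the paper's argument: pull the rank-one operators back along the isometric action of $\ker(A\act X)^\perp$ to define the left pairing, and check the compatibility axioms by associativity $\ket{x}\circ(\bra{y}\circ\ket{z})=(\ket{x}\circ\bra{y})\circ\ket{z}$ in Dirac calculus. The gap is in your \emph{only if} direction, at the step where you place the given left pairing $[x,y]$ inside $\ker(A\act X)^\perp$. You argue that elements of $A$ acting identically on $X$ differ by $\ker(A\act X)$, that $\ker(A\act X)^\perp$ ``serves as a canonical splitting'', and that ``uniqueness of the pairing'' then forces $[x,y]\in\ker(A\act X)^\perp$. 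Neither half of this works. First, $A$ does not decompose as $\ker(A\act X)\oplus\ker(A\act X)^\perp$ in general (take $A=C[0,1]$ and $\ker(A\act X)=C_0([0,\tfrac12))$: the sum of the two ideals is a proper ideal), so there is no splitting to project along, and a priori the coset $[x,y]+\ker(A\act X)$ need not even meet $\ker(A\act X)^\perp$. Second, uniqueness of the left pairing is a \emph{conclusion} of the theorem, not a hypothesis: a Hilbert bimodule comes with its left pairing as given data, and nothing in your argument excludes that this datum takes values outside $\ker(A\act X)^\perp$.

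What is actually needed --- and what the paper's proof supplies --- is that the axioms of the left Hilbert module structure already force the range of the left pairing into $\ker(A\act X)^\perp$. The one-line version: $A$-linearity of the left pairing in its first slot gives, for $k\in\ker(A\act X)$, that $k\,[x,y]=[kx,y]=[0,y]=0$, so $\ker(A\act X)\,[X,X]=0$ and hence $[X,X]\subset\ker(A\act X)^\perp$. The paper packages this as the support-ideal identity $\ker(\text{action})=\braket{X|X}^\perp$ for a one-sided Hilbert module, obtained from the Blanchard factorization $\ket{X}=\ket{X}\braket{X|X}$, and then mirrors the statement from the right-hand structure to the left-hand one. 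Once this inclusion is in place, faithfulness of the action on $\ker(A\act X)^\perp$ does pin $[x,y]$ down as the unique preimage of $\ketbra{x}{y}$, and the rest of your argument closes; but as written the decisive step is a non sequitur.
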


Before we begin with the proof, we note that the situation above may be easiest pictured in mind (and remembered) by the following illustration:
\begin{center}
  \hspace{-0.4cm}
  \begin{tikzpicture}[scale=1]
    \draw (-2.7,2.4) ellipse [x radius=5/2,y radius=5/4, rotate=-65];
    \draw[fill=gray!10] (-2.35,1.7) ellipse [x radius=1.2,y radius=0.8, rotate=-65];
    \draw[double equal sign distance] (-1,1.7) to[bend left=15] (3,1.7);
    \draw (-0.2,0) rectangle (8,5);
    \draw[fill=gray!10] (5,1.8) ellipse [x radius=1.8,y radius=1, rotate=20];
    \path[fill=gray!30,opacity=0.8] (4,2.8) ellipse [x radius=1.5,y radius=0.8, rotate=120];
    \draw (4,2.8) ellipse [x radius=1.5,y radius=0.8, rotate=120];
    \node at (-3.4,3.8) {$A$};
    \node at (1.2,2.4) {$\ker(A\act X)^\perp$};
    \node at (6.5,4.2) {$\ADJOINTS(X|X)$};
    \node at (3.8,3.2) {$\ket{X}\bra{X}$};
  \end{tikzpicture}
\end{center}
As such a correspondence in general may be seen as a Hilbert bimodule with a partial dual pairing.
With this picture in mind let us get to the proof.

\begin{proof}
  Instead of Hilbert bimodules, it suffices to consider the case of Hilbert modules (meaning the case of a single action and pairing) say given by
  \[
    \braket{X|X}\subset A,\TAB XA\subset X.
  \]
  The kernel for the single action (in this case from the right) agrees with the orthgonal complement for the pairing (and so its support ideal):
  \[
    \ker(X\curvearrowleft A) = \braket{X|X}^\perp \left(= \supp(X)^\perp\right).
  \]
  Indeed using Blanchard factorization \cite[Lemma~1.3]{BLANCHARD} this immediately follows from
  \[
    \braket{X|X}a=0\iff\Big(\ket{X}=\ket{X}\braket{X|X}\Big)a=0.
  \]
  In turn this observation tells us where to search for the pairing:
  \[
    \braket{X|X}\subset\braket{X|X}^{\perp\perp}=\ker(X\curvearrowleft A)^\perp
  \]
  Replacing the right action from our consideration above by the left action from the proposition we thus just revealed the condition from the proposition.
  Conversely, given the condition from the proposition we may simply retrieve the dual pairing using the isometric image of the orthogonal complement.
  Using Dirac calculus from Proposition \ref{DIRAC-CALCULUS} we finally obtain (simply as composition)
  \[
    \ket{x}\circ\Big(\bra{y}\circ\ket{z}\Big) = \Big(\ket{x}\circ\bra{y}\Big)\circ\ket{z}
  \]
  and so the traditional compatibility (between pairings) holds trivially.
\end{proof}

%%%%%%%%%%%%%%%%%%%%%%%
%%  Representations  %%
%%%%%%%%%%%%%%%%%%%%%%%
\section{Representations}
\label{sec:REPRESENTATIONS}

Consider an abstract correspondence as introduced in the first section:
\[
  X^*X\subset A,\TAB XA\subset X,\TAB AX\subset X.
\]
This structure is in some sense freely floating,
and so we wish to embed this structure as a whole into an ambient operator algebra as illustrated:
\begin{center}
\begin{tikzpicture}[scale=0.55]
    \draw[fill=gray!10] (1,2) circle (1) node {$X$};
    \draw[fill=gray!10] (3,0) circle (1) node {$A$};
    \draw[fill=gray!10] (5,-2) circle (1) node {$X^*$};
    \draw (5,1) edge[bend left =10,->] (9,1.3);
    % \draw (15.5,0.5) ellipse (6 and 4) node[shift={(2,1)}]{\large$B$};
    \draw (9.5,-3.5) rectangle (21.5,4.5);
    \node at (20,3) {$B$};
    \draw[fill=gray!10] (14.5,2) circle (1) node {$X$};
    \draw[fill=gray!10] (17,0) ellipse (2 and 1.5) node {$A$};
    \draw[fill=gray!10] (14,-1.5) circle (1) node {$X^*$};
\end{tikzpicture}
\end{center}
A good analogy here is the embedding of Fell bundles into any crossed product.
This is what we understand as a representation. More precisely, that is a representation of both the correspondence and the coefficient algebra into some ambient operator algebra say
\[
  \begin{tikzcd}[column sep=1cm]
    (X,A)\rar& B
  \end{tikzcd}:\TAB\TAB\tau: X\to B,\TAB\TAB\phi:A\to B
\]
so the former being a morphism of vector spaces and the latter a morphism of operator algebras
and such that the pair is coherent with the structure in between, which now reads in Dirac formalism (see Proposition \ref{DIRAC-CALCULUS}):
\[
  \phi(x^*y)=\tau(x)^*\tau(y),\TAB\TAB\tau(xa)=\tau(x)\phi(a),\TAB\TAB\tau(ax)=\phi(a)\tau(x).
\]
It is well-known that the latter follows automatically from the former two:
Indeed using the \CSTAR-identity we find (written in Dirac formalism)
\begin{gather*}
  \Big[\phi(a)\tau(x)-\tau(ax)\Big]^*\Big[\phi(a)\tau(x)-\tau(ax)\Big] = \\[\jot]
  = \phi(a)^*\tau(x)^*\tau(x)\phi(a) - \ldots + \tau(ax)^*\tau(ax)\\[\jot]
  = \phi(x^*a^*ax) - \ldots + \phi(x^*a^*ax)=0.
\end{gather*}
Consider now the structure as embedded within the ambient operator algebra
\[
  X=\tau (X)\subset B,\TAB A=\phi(A)\subset B
\]
then we may also view those as subspace and subalgebra which renders the abstract inclusions from the previous section into actual inclusions
\[
  X^*X\subset A\subset B,\TAB XA\subset X\subset B,\TAB AX\subset X\subset B.
\]
We note that while these representations are evidently not faitful in general, one may always pass to its quotient correspondence which renders the representation faithful. This will define the first parameter for the classification.

We now wish to extend a representation to higher and mixed tensor powers:
For this we recall the following result by Kajiwara--Pinzari--Watatani:
\begin{PROP}[{\cite[Lemma 2.1]{KAJIWARA-PINZARI-WATATANI}}]\label{HIGHER-MIXED}
  Representations canonically extend to the tensor product and compact operators (denoted in Dirac formalism):
  \begin{gather*}
    \left(\TAB
    \begin{gathered}
      \tau:X\to Y\\
      \phi:A\to B
    \end{gathered}
    \TAB\right)
    \implies
    \left(\TAB
    \begin{gathered}
      XX\to B:\\
      XX^*\to B:
    \end{gathered}\TAB\TAB
    \begin{aligned}
      \tau(xy)&:=\tau(x)\tau(y)\\
      \tau(xy^*)&:=\tau(x)\tau(y)^*
    \end{aligned}
    \TAB\right)
  \end{gather*}
  The latter further satisfies the relations
  \begin{gather*}
    \begin{gathered}
      KL\in (XX^*)(XX^*)\subset(XX^*):\\[2\jot]
      aKb\in A(XX^*)A\subset(XX^*):
    \end{gathered}\TAB\TAB
    \begin{gathered}
      \tau(K)\tau(L)=\tau(KL)\\[\jot]
      \phi(a)\tau(K)\phi(b)=\tau(aKb)
    \end{gathered}
  \end{gather*}
  and so defines in particular a morphism of operator algebras.\\
  Furthermore, suppose the morphism is isometric on the coefficient algebra\\
  then so also on the correspondence and on compact operators:
  \begin{align*}
    \|(A\to B)\BLANK\|=\|\BLANK\|\TAB&\implies\TAB\|(X\to B)\BLANK\|=\|\BLANK\|,\\
    \|(A\to B)\BLANK\|=\|\BLANK\|\TAB&\implies\TAB\|(XX^*\to B)\BLANK\|=\|\BLANK\|.
  \end{align*}
  By iteration the proposition includes all higher and any other mixed powers.
\end{PROP}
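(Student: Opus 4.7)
The plan is to verify each extension first on algebraic spans and then propagate them by well-chosen norm estimates. For the tensor product $X \otimes_A X$, I would first check that the prescription $\tau(x \otimes y) := \tau(x)\tau(y)$ descends past the balanced relation, since
\[
\tau(xa)\tau(y) = \tau(x)\phi(a)\tau(y) = \tau(x)\tau(ay).
\]
To extend past algebraic elementary tensors, the key computation is that for $\xi = \sum_i x_i \otimes y_i$ and its candidate image $T = \sum_i \tau(x_i)\tau(y_i)$ one has
\[
T^*T = \sum_{i,j} \tau(y_i)^* \phi(\braket{x_i|x_j}) \tau(y_j) = \phi(\braket{\xi|\xi}),
\]
using only the defining relations of a representation together with $\tau(y)^*\tau(x) = \phi(\braket{y|x})$. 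This single identity delivers simultaneously (i) well-definedness, since $\xi = 0$ forces $T^*T = 0$ and hence $T = 0$ by the \CSTAR-identity; (ii) the bound $\|T\|^2 \leq \|\xi\|^2$, allowing continuous extension to the completion; and (iii) isometry on $X \otimes X$ whenever $\phi$ is isometric on $A$. Iteration then delivers the statement on all higher tensor powers.

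For the extension to $XX^* \subset \COMPACTS(X|X)$, I would prescribe $\tau(\sum_i \ketbra{x_i}{y_i}) := \sum_i \tau(x_i)\tau(y_i)^*$. The stated algebraic relations $\tau(K)\tau(L) = \tau(KL)$ and $\phi(a)\tau(K)\phi(b) = \tau(aKb)$ are then direct computations, invoking $\tau(y)^*\tau(x) = \phi(\braket{y|x})$ and $\tau(ax) = \phi(a)\tau(x)$ and reassociating in Dirac fashion as in Proposition \ref{DIRAC-CALCULUS}. The delicate point is again well-definedness and boundedness: evaluating $T$ against any $\tau(z)$ only produces $T\tau(z) = \tau(Kz)$, so $K = 0$ yields $T\tau(X) = 0$, which is not in itself enough to force $T = 0$. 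I would resolve this by repackaging a finite sum $K = \sum_i \ketbra{x_i}{y_i}$ as a row--column product $\ket{\underline x}\bra{\underline y}$ living in an amplification of $X$, so that $K^*K$ lands in a tensor power of compacts where the norm identity of the first paragraph applies directly. This yields $\|T\|^2 = \|TT^*\| \leq \|KK^*\| = \|K\|^2$, with equality whenever $\phi$ is isometric, giving the claimed isometry for compact operators as well.

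The main obstacle is precisely this well-definedness on $XX^*$: the naive substitution argument available for $X \otimes X$ breaks down here, and one needs the matrix/row--column reformulation in order to recycle the tensor-product norm estimate. Once both components are in hand, the iteration to arbitrary mixed higher powers proceeds by alternating the two constructions and folding adjacent $X^*X$ factors into $A$ via $\tau(x)^*\tau(y) = \phi(\braket{x|y})$; at each step the two isometry clauses combine to propagate isometry from $\phi$ to the whole extended representation.
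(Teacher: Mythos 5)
Your proposal is correct and follows essentially the same route as the paper: the well-definedness and norm control on $XX^*$ are obtained by rewriting a finite sum $\sum_i x_iy_i^*$ as a row--column product and reducing, via the \CSTAR-identity, to the (iso)metric behaviour of the matrix amplification of $\phi$ on $M_N(A)$ --- which is exactly the Kajiwara--Pinzari--Watatani trick $\|xy^*\|=\|\sqrt{x^*x}\sqrt{y^*y}\|$ used in the paper. Your tensor-product computation $T^*T=\phi(\braket{\xi|\xi})$ and the identification of well-definedness on compacts as the genuinely delicate point likewise match the paper's argument.
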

\begin{proof}
  We provide the extension to compact operators since it demonstrates the use of Dirac calculus with a \textbf{neat trick by Kajiwara--Pinzari--Watatani:}
  We need to verify that the formal linear assignment on elementary compact operators remains bounded
  \[
    \hspace{2cm}\left\|\tau\left(\sum_nx_ny_n^*\right)\right\|\leq\left\|\sum_nx_ny_n^*\right\|,\TAB\TAB\forall x_n,y_n\in X,
  \]
  whence the assignment allows an (a posteriori well-defined) extension to the completion of all compact operators.
  For this we invoke matrix calculus to reformulate the linear sum as a product of matrices
  \[
     (x)(y^*):=\sum_nx_ny_n^*=\MATRIX{x_1&\cdots&x_N}\MATRIX{y_1^*\\\vdots\\y_N^*}.
  \]
  Reformulated, the quite clever trick by Kajiwara Pinzari and Watatani is now to use the \CSTAR-identity (generalized to matrices of adjointable operators):
  \begin{gather*}
    \|xy^*\|^2 = \Big\| (xy^*)(yx^*)=x(y^*y)x\Big\| = \Big\|x\sqrt{y^*y}\Big\|=\ldots= \Big\|\sqrt{x^*x}\sqrt{y^*y}\Big\|.
  \end{gather*}
  Note that this automatically invokes matrix inflations since for example
  \[
    x^*x = \MATRIX{\bra{x_1}\\\vdots\\\bra{x_N}}\MATRIX{\ket{x_1}&\cdots&\ket{x_N}} =\MATRIX{\braket{x_1|x_1} & \cdots & \braket{x_1|x_N}\\\vdots&\ddots&\vdots\\\braket{x_N|x_1}&\cdots&\braket{x_N|x_N}}.
  \]
  On the other hand, this equally applies when invoking the representation and so we obtain the desired bound
  \[
    \|\tau(xy^*)\| = \Big\|\phi\Big(\sqrt{x^*x}\sqrt{y^*y}\Big)\Big\|
    \leq \Big\|\sqrt{x^*x}\sqrt{y^*y}\Big\|=\|xy^*\|.
  \]
  In turn this trick also applies for the correspondence and tensor product
  \begin{gather*}
    X\to B:\TAB\|\tau(x)\| = \|\phi(x^*x)\|\leq \|x^*x\|=\|x\|,\\[2\jot]
    XX\to B:\TAB\|\tau(xy)\| = \|\phi(y^*x^*xy)\|\leq \|y^*x^*xy\|=\|xy^*\|
  \end{gather*}
  as well as on every other type of tensor product such as of operator algebras.\\
  Meanwhile, this moreover infers that isometricity passes from the coefficient algebra to the correspondence (and tensor product) and to compact operators.
  Moreover, the morphism satisfies the relations on compact operators since one easily verifies on elementary compact operators:
  \begin{gather*}
    \phi(a)\tau(xy^*)\phi(b)=\phi(a)\tau(x)\tau(y^*)\phi(b)=\tau(axy^*b), \\[\jot]
    \tau(xy^*)\tau(zw^*)=\tau(x)\phi(y^*z)\tau(w^*)=\tau(xy^*zw^*).
  \end{gather*}
  We therefore found the desired relations and the proof is complete.
\end{proof}

With this at hand we may now introduce the notion of covariances.
We begin for this with the observation that (as in the previous proposition) we could have also defined the morphism (somewhat senseless)
\[
  A\supset\braket{X|X}\to B:\TAB\tau(x^*y):=\tau(x)^*\tau(y)
\]
which agrees with the oringal one on the coefficient algebra (somewhat trivially):
\begin{TIKZCD}[column sep=5cm]
  \braket{X|X}\dar\drar[phantom]{\tau(\BLANK)=\phi(\BLANK)} \rar & B\dar[equal] \\
  A\rar & B.
\end{TIKZCD}
Suppose on the other hand that some elements also act as compact operators:
\[
  A\cap\ketbra{X}{X}:=\Big\{~a\in A~\Big|~ a\in \ketbra{X}{X}~\Big\}\subset A.
\]
Then there is no evidence to believe that the induced morphism from the previous proposition would agree with the morphism for the algebra:
\begin{TIKZCD}[column sep=5cm]
  A\cap\ketbra{X}{X}\dar\rar\drar[phantom]{\tau(\BLANK)\neq\phi(\BLANK)} & B\dar[equal] \\
  \ketbra{X}{X}\rar & B.
\end{TIKZCD}
Nevertheless the difference of morphisms surprisingly defines a morphism of operator algebras since (using the property from Proposition \ref{HIGHER-MIXED})
\begin{align*}
  (\phi-\tau)a(\phi-\tau)b
  &= \phi(a)\phi(b)-\phi(a)\tau(b)-\tau(a)\phi(b) + \tau(a)\tau(b) \\
  &= \phi(ab)-\tau(ab)-\tau(ab) + \tau(ab) =  \phi(ab) - \tau(ab).
\end{align*}
and so our representation decomposes into the sum of morphisms
\begin{align}
  &\left(\begin{tikzcd}[column sep=1cm]
    A\cap\ketbra{X}{X}\rar&B
  \end{tikzcd}\right)= \label{DIFFERENCE-MORPHISM}\\[2\jot]
  &= \left(\begin{tikzcd}[column sep=0.2cm,row sep=0.4cm]
    A\cap\ketbra{X}{X}\dar \\
    \ketbra{X}{X}\rar& B
  \end{tikzcd}\right)\: +\:
  \left(\begin{tikzcd}[column sep=0.5cm,row sep=0.4cm]
    A\cap\ketbra{X}{X}\rar &B\dar[equal] \\
    & B
  \end{tikzcd}~-~
  \begin{tikzcd}[column sep=0.2cm,row sep=0.4cm]
    A\cap\ketbra{X}{X}\dar \\
    \ketbra{X}{X}\rar& B
  \end{tikzcd}\right).\notag
\end{align}
As such we may capture the domain of equality by the covariance ideal:
\begingroup
\setlength{\abovedisplayskip}{\baselineskip}
\setlength{\belowdisplayskip}{\baselineskip}
\begin{equation}\label{COVARIANCE:DEFINITION}
  \cov\Big(\begin{tikzcd}[column sep=0.3cm]
    (X,A)\to B
  \end{tikzcd}\Big):=
  \ker\bigg(~\rule[1ex/2]{2cm}{0.1pt}~''~\rule[1ex/2]{2cm}{0.1pt}~\bigg)\ILEQ A.
\end{equation}
\endgroup
We will often drop the dependency on the coefficient algebra for simplicity.\\
The covariance and kernel will classify the gauge-equivariant representations.
Let us thus take a closer look at kernel morphisms and possible covariances:

%%%%%%%%%%%%%%%%%%%
%%  Covariances  %%
%%%%%%%%%%%%%%%%%%%
\section{Kernel and Covariance}
\label{sec:COVARIANCES}

We begin this section with a characterization of kernel morphisms in the category of correspondences.
For this let us introduce the general notion of morphisms between correspondences.
As for representations, these are given by a pair of morphisms on the correspondence and the coefficient algebra
\[
  \begin{tikzcd}[column sep=1cm]
    (X,A)\rar& (Y,B)
  \end{tikzcd}:\TAB\TAB\tau:X\to Y,\TAB\TAB \phi:A\to B
\]
where the former defines a linear morphism and the latter a morphism of operator algebras and such that the pair is coherent with the structure in between, which conveniently reads in Dirac formalism:
\[
  \phi(x^*y)=\tau(x)^*\tau(y),\TAB\TAB \tau(ax)=\phi(a)\tau(x),\TAB\TAB \tau(xa)=\tau(x)\phi(a).
\]
While the resulting category fails to be abelian (basically due to the algebraic morphism on the coefficient algebra) it still possesses all kernels and cokernels.
As the notion of kernels and cokernels is nonstandard however in categories beyond abelian ones, let us give a quick introduction.
Our catgory of correspondences and their morphisms has zero morphisms in the sense:
\begin{gather*}
  \begin{tikzcd}[column sep=1cm]
    (X,A)\rar & (Y,B)\rar{0} & (Z,C)
  \end{tikzcd}
  =
  \begin{tikzcd}[column sep=1cm]
    (X,A)\rar{0} & (Z,C),
  \end{tikzcd}\\
  \begin{tikzcd}[column sep=1cm]
    (X,A)\rar{0} & (Y,B)\rar & (Z,C)
  \end{tikzcd}
  =
  \begin{tikzcd}[column sep=1cm]
    (X,A)\rar{0} & (Z,C).
  \end{tikzcd}
\end{gather*}
A kernel for a morphism is the universal annihilating morphism:
\[
  \begin{tikzcd}[column sep=1cm]
    (?,?)\rar{\ker} & (X,A)\rar & (Y,B)
  \end{tikzcd}
  =0
\]
That is any other annihilating morphism factors uniquely over the kernel:
\begin{gather*}
  \begin{tikzcd}[column sep=1cm]
    (X',A')\rar & (X,A)\rar & (Y,B)
  \end{tikzcd}=0\\[\jot]
  \implies\TAB
  \begin{tikzcd}[column sep=1cm]
    (X',A')\rar[dashed]{\exists!} & (?,?)\rar{\ker} & (X,A).
  \end{tikzcd}
\end{gather*}
Dually one may define the cokernel of morphisms. Moreover, we define a short exact sequence denoted by
\begin{TIKZCD}[column sep=1cm]
  0\rar& (?,?)\rar& (X,A)\rar& (?,?) \rar&0
\end{TIKZCD}
whenever each side is the kernel respectively cokernel of the other:
\begin{gather*}
  \begin{tikzcd}[column sep=1cm]
    (?,?)\rar & (X,A)
  \end{tikzcd}=\ker\Big(
  \begin{tikzcd}[column sep=0.5cm]
    (X,A)\rar & (?,?)
  \end{tikzcd}\Big)\\[2\jot]
  \begin{tikzcd}[column sep=1cm]
    (X,A)\rar & (?,?)
  \end{tikzcd}=\coker\Big(
  \begin{tikzcd}[column sep=0.5cm]
    (?,?)\rar & (X,A)
  \end{tikzcd}\Big)
\end{gather*}
We will fill the question marks in the proposition below.\\
But before we note the following equivalent notions of invariant and hereditary ideals
which date all the way back to Pimsner and as explicitely coined by Kajiwara--Pinzari--Watatani
(we refrain from the notion of negatively invariant ideals as we will find those from a different perspective later on):

%%%%%%%%%%%%%%%%%%%%%%%%%%%%%%
%%  Hereditary = Invariant  %%
%%%%%%%%%%%%%%%%%%%%%%%%%%%%%%
\begin{BREAKLMM}[{\cite[Lemma~3.5 following]{PIMSNER-1997} and \cite[section 4]{KAJIWARA-PINZARI-WATATANI};\\
  see further also \cite[section 2]{FOWLER-MUHLY-RAEBURN} and similarly also \cite[section 1]{KATSURA-GAUGE-IDEALS}}]
  The notion of invariant and hereditary ideals coincide.
  More precisely, one has the characterization for ideals in the coefficient algebra $K\ILEQ A,$
  \begin{equation}\label{INVARIANCE-IMAGE}
    XK=\{x\in X\mid X^*x\in K\}=\{x\in X\mid x^*x\in K\}
  \end{equation}
  and so it furthermore holds the equivalence
  \begin{equation}\label{INVARIANT=HEREDITARY}
    X^*KX\subset K\iff KX\subset XK.
  \end{equation}
  The former is generally refered to as invariant and the latter as hereditary.
\end{BREAKLMM}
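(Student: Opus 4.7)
The plan is to prove the characterisation \eqref{INVARIANCE-IMAGE} first, and then to derive \eqref{INVARIANT=HEREDITARY} as a short consequence. For \eqref{INVARIANCE-IMAGE} the two inclusions
\[
  XK \subset \bigl\{x \in X \,\big|\, X^*x \subset K\bigr\} \subset \bigl\{x \in X \,\big|\, x^*x \in K\bigr\}
\]
are immediate and formal: if $x = yk$ for $y \in X$ and $k \in K$, then $X^*x = (X^*y)k \subset AK \subset K$; and $x^*x$ is just one particular element of the form $y^*x$, so $X^*x \subset K$ trivially specialises to $x^*x \in K$. The entire content of \eqref{INVARIANCE-IMAGE} therefore lies in the converse inclusion $\{x^*x \in K\} \subset XK$, which I expect to be the main obstacle.

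For this reverse inclusion the idea is an approximate-unit argument using continuous functional calculus. Setting $a := x^*x \in K_+$, since $K$ is a closed two-sided ideal and $a$ is positive, every continuous function of $a$ vanishing at $0$ again lies in $K$; in particular $(x^*x)^{1/n} \in K$ for each $n$. I then estimate
\[
  \bigl\|x - x(x^*x)^{1/n}\bigr\|^2
  = \bigl\|\bigl(1 - (x^*x)^{1/n}\bigr)(x^*x)\bigl(1 - (x^*x)^{1/n}\bigr)\bigr\|
  = \sup_{t \in \spectrum(x^*x)} t\bigl(1 - t^{1/n}\bigr)^2,
\]
which tends to zero by the uniform convergence $t(1 - t^{1/n})^2 \to 0$ on the bounded spectrum of $x^*x$. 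Hence $x = \lim_n x\,(x^*x)^{1/n}$ with each approximant in $XK$; since $XK$ is closed, $x \in XK$. As an alternative one could invoke the Blanchard factorisation already quoted in the proof of Theorem~\ref{HILBERT-BIMODULES} in the form $\ket{X} = \ket{X}\braket{X|X}$ and track the ideal membership through it, but the direct functional-calculus route above is cleaner.

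With \eqref{INVARIANCE-IMAGE} in hand the equivalence \eqref{INVARIANT=HEREDITARY} is short. The backward direction $KX \subset XK \Rightarrow X^*KX \subset K$ is immediate from $X^*KX \subset X^*XK \subset AK \subset K$. For the forward direction, assuming $X^*KX \subset K$, for any $k \in K$ and $x \in X$ I compute
\[
  (kx)^*(kx) = x^*k^*kx \in X^*KX \subset K,
\]
so by the characterisation \eqref{INVARIANCE-IMAGE} just proven it follows that $kx \in XK$, i.e.\ $KX \subset XK$. Thus everything beyond the functional-calculus step of \eqref{INVARIANCE-IMAGE} is formal.
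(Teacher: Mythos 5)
Your proposal is correct and follows essentially the same route as the paper: trivial forward inclusions, an approximate-unit argument for the converse inclusion $\{x^*x\in K\}\subset XK$ (the paper uses a general approximate identity $e$ for $K$ with $(1-e)x^*x(1-e)\to0$, you use the concrete sequence $(x^*x)^{1/n}$ via functional calculus — the same idea), and then the identical derivation of \eqref{INVARIANT=HEREDITARY} via $(kx)^*(kx)=x^*k^*kx\in X^*KX\subset K$. No gaps.
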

\begin{proof}
  Katsura gives a neat proof based on some factorization result by Lance, more precisely \cite[Lemma 4.4]{LANCE},
  which itself however still requires some rather technical approximation.
  Instead we may verify the equivalence with the following fairly elementary observations:
  Let us first note that both of the right-hand spaces are automatically linear and closed by Cohen--Hewitt:
  \[
    KX = \CLOSESPAN KX\subset X,\TAB XK=\CLOSESPAN XK\subset X.
  \]
  Now we clearly have the forward inclusions since
  \[
    X^*(XK)=(X^*X)K\subset AK\subset K.
  \]
  Conversely, we have using any approximate identity for the ideal:
  \[
    x^*x\in K\implies (1-e)x^*x(1-e)\to0\implies x=\lim_e(xe)\in XK.
  \]
  With the characterization at hand we further obtain the forward direction
  \[
    X^*KX\subset K\implies KX\subset XK:\TAB (X^*K^*)(KX)= X^*KX\subset K.
  \]
  Alternatively, one may verify the forward direction using Blanchard factorization (whose original proof is very neat and elementary, see \cite[Lemma 1.3]{BLANCHARD}):
  \[
    KX = (KX)(KX)^*(KX)= (KX)(X^*KX)\subset(KX)K\subset XK
  \]
  This however implicitely invokes the yet technical Cohen--Hewitt (see above).\\
  If one wishes to refrain from using Cohen--Hewitt altogether, one may also argue in the following elementwise way (formulated in Dirac notation):
  \begin{gather*}
    K\ket{X}\ni k\ket{x} = \ket{y}\braket{y|y}\implies \braket{y|y}^3 = \bra{x}k^*k\ket{x}\in K \\[2\jot]
    \implies \braket{y|y} = \sqrt[3]{\bra{x}k^*k\ket{x}}\in K \implies k\ket{x}=\ket{y}\braket{y|y}\in XK.
  \end{gather*}
  All of these variants for the forward direction have their advantage.\\
  For the converse direction we may simply argue
  \[
    KX\subset XK\implies X^*KX\subset X^*XK\subset AK\subset K.
  \]
  So the notion of invariant and hereditary ideals coincide.
\end{proof}

%%%%%%%%%%%%%%%%%%%%%%%%%%%%%%%%%
%%  GRAPH EXAMPLE: HEREDITARY  %%
%%%%%%%%%%%%%%%%%%%%%%%%%%%%%%%%%
Let us give an example to illuminate the notion of hereditary ideals:
\begin{BREAKEXM}[Graph correspondences: hereditary ideals]
  \label{GRAPH-CORRESPONDENCES:HEREDITARY-IDEALS}
  Consider a graph correspondence as in Example \ref{GRAPH-CORRESPONDENCES:TENSOR-POWERS}:
  \[
    X=\ell^2\Bigl(~E=\edges~\Bigr),\TAB A=c_0\Bigl(~V=\mathrm{vertices}~\Bigr).
  \]
  Then every ideal corresponds to some collection of vertices
  \[
    K=c_0\Big(S=\mathrm{some~vertices}\Big)\ILEQ c_0(V) = A
  \]
  and hereditary ideals become the hereditary collection of vertices
  \[
    c_0(S)\ell^2(E)\subset\ell^2(E)c_0(S)
    \TAB\iff\TAB SE\subset ES
  \]
  which reads written out in words
  \[
    \range(\mathrm{some~edge})\in S\implies \source(\mathrm{some~edge})\in S
  \]
  and whence \textbf{their name: hereditary ideals.}
\end{BREAKEXM}

%%%%%%%%%%%%%%%%%%%%%%%%%%%
%%  Kernel: Observation  %%
%%%%%%%%%%%%%%%%%%%%%%%%%%%
In order to understand kernel morphisms (and so in turn also cokernel morphisms) we make the following observation:
Suppose a morphism vanishes on the coefficient algebra, then it does so also on the entire correspondence:
\begin{equation}\label{KERNEL-OBSERVATION}
  (\begin{tikzcd}[column sep=0.5cm]
    A\rar & B
  \end{tikzcd})=0
  \TAB\implies\TAB
  (\begin{tikzcd}[column sep=0.5cm]
    X\rar& Y
  \end{tikzcd})=0
\end{equation}
Indeed one easily verifies (in a way using the \CSTAR-identity)
\[
  \tau(x)=0\implies \phi(x^*x)=\tau(x)^*\tau(x)=0\implies x^*x=0\implies x=0
\]
which is equivalently the commutative diagram (see the previous section)
\[
  \begin{tikzcd}[column sep=4cm]
    \braket{X|X}\drar[phantom]{\tau(\BLANK)=\phi(\BLANK)} \rar\dar &0\dar[equal] \\
    A \rar& 0 \subset B.
  \end{tikzcd}
\]
As such one may already expect the kernel of morphisms to involve the kernel on the coefficient algebra in some crucial way.
With this in mind, we may now give a new intrinsic characterization of kernel and cokernel morphism.

%%%%%%%%%%%%%%%%%%%%%%%%%%%%%%%%%%%%%%%%
%%  Kernel-Cokernel: Acknowledgement  %%
%%%%%%%%%%%%%%%%%%%%%%%%%%%%%%%%%%%%%%%%
Meanwhile the author would like to note that the idea to consider concrete kernel correspondences (by invariant ideals) dates back to \cite{PIMSNER-1997} and their quotient correspondence to \cite{KAJIWARA-PINZARI-WATATANI} with their representations already appearing in the proof of \cite[theorem~4.3]{KAJIWARA-PINZARI-WATATANI} and as explicitely in \cite{FOWLER-MUHLY-RAEBURN}.\\
The author identified those \textbf{as partial results on} the intrinsic characterisation of categorical \textbf{kernel and cokernel morphisms,} which further expanded and completed provide the \textbf{following entire classification} of kernel and cokernel morphisms in the category of correspondences, which we promote as theorem due to its usefulness (also in upcoming work).

%%%%%%%%%%%%%%%%%%%%%%%%%%%
%%  Kernel and cokernel  %%
%%%%%%%%%%%%%%%%%%%%%%%%%%%
\begin{BREAKTHM}[Classification: kernel and cokernel morphisms]
  \label{THEOREM:KERNEL-COKERNEL-MORPHISMS}
  The category of correspondences has all kernel and all cokernel morphisms.\\
  More precisely, they are all of the special form
  \begin{TIKZCD}[column sep=1cm]
    0\rar& (XK,K)\rar& (X,A)\rar& \left(\frac{X}{XK},\frac{A}{K}\right) \rar&0
  \end{TIKZCD}
  precisely for the invariant (and equivalently hereditary) ideals
  \[
    K\ILEQ A:\TAB\TAB X^*KX\subset K\TAB (\iff KX\subset XK).
  \]
  Conversely, given a morphism its kernel is given by
  \[\hspace{-0.5cm}
    \begin{tikzcd}[column sep=0.8cm]
      (XK,K)\rar{\ker} & (X,A)\rar & (Y,B)
    \end{tikzcd}:\TAB\TAB\TAB
    K=\ker(A\to B)\ILEQ A.
  \]
  As a consequence, its cokernel is given as (within the image)
  \[
    \begin{tikzcd}[column sep=0.8cm]
      (X,A)\rar & (Y,B)\rar{\coker} & \left(\frac{Y}{YL},\frac{B}{L}\right)
    \end{tikzcd}:\TAB
    L = {\sum}_{n\geq0} \bra{Y^n}BAB\ket{Y^{n}}\ILEQ B.
  \]
  Furthermore, the cokernel morphisms are precisely \textbf{the surjective morphisms.}
\end{BREAKTHM}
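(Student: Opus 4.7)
The plan is to proceed in four stages. First, verify that for every invariant ideal $K\ILEQ A$ the pair $(XK,K)$ together with the quotient $(X/XK,A/K)$ genuinely forms the asserted short exact sequence of correspondences. Second, identify the kernel of a given morphism with the pair built from $K=\ker(A\to B)$. Third, dualise this to extract the explicit cokernel formula. Finally, deduce the surjectivity characterisation.

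For the first stage, the hereditary/invariant dictionary \eqref{INVARIANT=HEREDITARY} directly gives $K(XK)\subset(KX)K\subset XK$ and $(XK)^*(XK)\subset X^*KX\subset K$, so $(XK,K)$ is a genuine subcorrespondence. Dually, the identification \eqref{INVARIANCE-IMAGE} of $XK$ as precisely the $x\in X$ with $\braket{x|x}\in K$ makes the bimodule structure and the pairing descend unambiguously to $(X/XK,A/K)$, and the universal properties in the short exact sequence then follow by a direct diagram chase.

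For the kernel stage, let $(\tau,\phi)\colon(X,A)\to(Y,B)$ be given with $K=\ker\phi$. The coherence relation yields $\phi(x^*kx)=\tau(x)^*\phi(k)\tau(x)=0$ for $x\in X$ and $k\in K$, so $X^*KX\subset K$ and $K$ is invariant. Observation \eqref{KERNEL-OBSERVATION} combined with \eqref{INVARIANCE-IMAGE} then shows that $\tau(x)=0$ exactly when $x^*x\in K$, i.e.\ exactly when $x\in XK$. Any test morphism $(X',A')\to(X,A)$ annihilated by composition must therefore factor uniquely through the inclusion $(XK,K)\hookrightarrow(X,A)$. Dually, the cokernel must be the universal quotient of $(Y,B)$ that kills the image; by \eqref{KERNEL-OBSERVATION} it suffices to kill $\phi(A)\subset B$ on the coefficient level, which amounts to quotienting $B$ by the smallest invariant ideal $L\ILEQ B$ containing $\phi(A)$. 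Starting from the ordinary ideal $BAB$ generated by $\phi(A)$ (identifying $A$ with its image) and iteratively enforcing invariance by adjoining $\bra{Y}(\cdot)\ket{Y}$ produces the telescoping sum $L=\sum_{n\geq0}\bra{Y^n}BAB\ket{Y^n}$. Invariance of this sum is then just an index shift, and minimality is built into the construction.

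The surjectivity characterisation follows at once: every cokernel morphism is a quotient map and thus surjective on both components, while for a surjective morphism the induced maps $A/K\to B$ and $X/XK\to Y$ are bijective (the latter by the equality $\ker(\tau)=XK$ from the kernel stage), so the morphism agrees with the cokernel of its own kernel inclusion. The genuinely delicate point in the whole argument is the third stage: producing the invariant hull $L$ as an explicit stabilising sum, rather than as an opaque intersection of invariant ideals containing $\phi(A)$. The remaining stages reduce to clean applications of the Dirac calculus together with the hereditary/invariant dictionary of the preceding lemma.
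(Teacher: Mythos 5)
Your proposal is correct and follows essentially the same route as the paper: the same staged decomposition (well-definedness of the short exact sequence for invariant ideals, identification of the kernel via $K=\ker(A\to B)$ using \eqref{KERNEL-OBSERVATION} and \eqref{INVARIANCE-IMAGE}, the telescoping invariant hull $L=\sum_{n\geq0}\bra{Y^n}BAB\ket{Y^n}$ for the cokernel, and the coimage comparison for the surjectivity characterisation). The only cosmetic difference is that the paper pins down the shape of a general cokernel via the Galois identity $\coker\ker\coker=\coker$, whereas you argue directly from the universal property of the quotient by the invariant hull of the image; the two arguments are interchangeable.
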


%%%%%%%%%%%%%%%%%%%%%%%%%%%%%%%%%%%%%%%
%%  Quotient norm: Banach = Hilbert  %%
%%%%%%%%%%%%%%%%%%%%%%%%%%%%%%%%%%%%%%%
Before we begin with the proof for the above proposition
let us give a quick clarification on the quotient norm
(and a rather quite simplified proof thereof):
\begin{BREAKLMM}[{compare with \cite[Lemma~2.1]{FOWLER-MUHLY-RAEBURN} and \cite[Lemma~1.5]{KATSURA-GAUGE-IDEALS}}]
  It holds for quotient correspondences: The norm for Hilbert modules agrees with the norm for quotient Banach spaces,
  \[
    \left\|\frac{x_0}{XK}\right\|_\mathrm{Hilbert}^2 = \inf\|\braket{x_0| x_0}+K\|
    = \inf\|x_0+XK\|^2 = \left\|\frac{x_0}{XK}\right\|_\mathrm{Banach}^2.
  \]
  In particular, the quotient correspondence is complete (by Cohen--Hewitt).
\end{BREAKLMM}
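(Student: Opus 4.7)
The plan is to split the equality into the two inequalities separately, with the direction ``Banach $\geq$ Hilbert'' coming for free from the $C^*$-identity and the direction ``Hilbert $\geq$ Banach'' requiring an approximate-unit argument for the ideal $K$. First I would unpack the two norms. The Hilbert quotient norm on $X/XK$ is (by definition of the pairing on the quotient correspondence $(X/XK, A/K)$)
\[
  \left\|\frac{x_0}{XK}\right\|_\mathrm{Hilbert}^2 = \|\braket{x_0|x_0}+K\|_{A/K} = \inf_{k\in K}\|\braket{x_0|x_0}+k\|,
\]
while the Banach quotient norm is simply $\inf_{y\in XK}\|x_0+y\|$; by the previous lemma $XK$ is already closed so no closure is needed in the infimum.

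For the easy direction I observe that whenever $y\in XK$, all of $\braket{x_0|y}$, $\braket{y|x_0}$ and $\braket{y|y}$ lie in $AK\subset K$, so $\braket{x_0+y|x_0+y}$ and $\braket{x_0|x_0}$ differ by an element of $K$. Then by the $C^*$-identity
\[
  \|x_0+y\|^2 = \|\braket{x_0+y|x_0+y}\| \geq \|\braket{x_0+y|x_0+y}+K\| = \|\braket{x_0|x_0}+K\|,
\]
and taking the infimum over $y\in XK$ gives Banach on the left, Hilbert on the right.

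For the reverse direction I would pick an approximate unit $(e_\lambda)\subset K$ for the ideal $K$. Then $x_0 e_\lambda\in XK$, so for every $\lambda$
\[
  \left\|\frac{x_0}{XK}\right\|_\mathrm{Banach}^2 \leq \|x_0-x_0e_\lambda\|^2 = \|(1-e_\lambda)\braket{x_0|x_0}(1-e_\lambda)\|,
\]
the last equality being just a direct expansion (working inside the unitisation of $A$ if needed). The whole proof then reduces to the standard fact that for any $a\in A$,
\[
  \lim_\lambda \|(1-e_\lambda)a(1-e_\lambda)\| = \|a+K\|_{A/K}.
\]
One direction of this limit is immediate because $(1-e_\lambda)a(1-e_\lambda)+K = a+K$ in $A/K$, hence the left side always dominates $\|a+K\|$. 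The other direction follows from the decomposition $(1-e_\lambda)a(1-e_\lambda) = (1-e_\lambda)(a+k)(1-e_\lambda)-(1-e_\lambda)k(1-e_\lambda)$ for arbitrary $k\in K$, together with $(1-e_\lambda)k(1-e_\lambda)\to 0$, giving $\limsup_\lambda \|(1-e_\lambda)a(1-e_\lambda)\|\leq \|a+k\|$ for every $k\in K$. Applying this with $a=\braket{x_0|x_0}$ yields the missing inequality $\|[x_0]\|_\mathrm{Banach}^2 \leq \|[x_0]\|_\mathrm{Hilbert}^2$.

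The main obstacle is really only the approximate-unit lemma above, and that is standard. Completeness of $X/XK$ in the Hilbert norm is then automatic: since the two norms coincide and $XK$ is closed (by Cohen--Hewitt, as already used in the preceding lemma), $X/XK$ is complete as a Banach quotient, and therefore also complete as a Hilbert module.
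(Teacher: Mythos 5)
Your proposal is correct and follows essentially the same route as the paper: the easy inequality comes from the $C^*$-identity applied to $\braket{x_0+y|x_0+y}\equiv\braket{x_0|x_0}\bmod K$, and the reverse inequality uses exactly the paper's ``simplifying trick'', namely the formula $\|a+K\|=\lim_\lambda\|(1-e_\lambda)a(1-e_\lambda)\|$ for an approximate unit of the ideal, applied to $a=\braket{x_0|x_0}$ via the test elements $x_0e_\lambda\in XK$. The only difference is that you additionally spell out a proof of that quotient-norm formula, which the paper simply cites as well known.
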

\begin{proof}As compared to both references,
  we give a \textbf{simplified proof:}\\
  On the one hand we have the obvious inclusion
  \[
    \|x_0+XK\|^2\subset\|\braket{x_0+XK| x_0+XK}\|\subset \|\braket{x_0| x_0}+K\|
  \]
  and as such the bound (when applying infima)
  \[
   \left\|\frac{x_0}{XK}\right\|_\mathrm{Hilbert}^2 = \inf\|\braket{x_0|x_0}+K\|
   \leq \inf \|x_0+XK\|^2 = \left\|\frac{x_0}{XK}\right\|_\mathrm{Banach}^2.
  \]
  For the converse implication we may invoke the well-known formula for the quotient norm on operator algebras \textbf{(the simplifying trick)}
  \[
    \left\|\frac{\braket{x_0|x_0}}{K}\right\| = \inf\|\braket{x_0|x_0}+K\| =\lim_{e\to1}\|(1-e)\braket{x_0|x_0}(1-e)\|
  \]
  where the limit runs over some or any approximate identity for the ideal.\\
  As such we obtain the converse containment
  \[
    \|\braket{x_0-x_0e|x_0-x_0e}\| = \|x_0-x_0e\|^2\in \|x_0+XK\|^2
  \]
  and so also the desired converse bound:
  \[
    \left\|\frac{x_0}{XK}\right\|_\mathrm{Banach}^2 = \inf \|x_0+XK\|^2
    \leq \lim_{e\to1}\|(1-e)\braket{x_0|x_0}(1-e)\| = \left\|\frac{x_0}{XK}\right\|_\mathrm{Hilbert}^2
  \]
  So the desired Hilbert module norm and Banach space norm coincide.
\end{proof}

%%%%%%%%%%%%%%%%%%%%%%%%%%%%%%
%%  Kernel-Cokernel: Proof  %%
%%%%%%%%%%%%%%%%%%%%%%%%%%%%%%
Having clarified the completeness for the quotient correspondence,
we may now savely get to the desired kernel and cokernel morphisms.
\begin{proof}[Proof of theorem~\ref{THEOREM:KERNEL-COKERNEL-MORPHISMS}]
  We begin with the statements about kernel morphisms.\\
  We first verify that each morphism admits a (categorical) kernel given by
  \[\hspace{-1cm}
    (XK,K)= \ker(\begin{tikzcd}[column sep=0.5cm]
      (X,A)\rar & (Y,B)
    \end{tikzcd}):\TAB\TAB
    K=\ker(A\to B).
  \]
  Clearly the kernel annihilates the morphism due to our observation \eqref{KERNEL-OBSERVATION}:
  \[
  \Big(\ker(A\to B)\to A\to B\Big)=0\TAB\implies\TAB\Big(X\ker(A\to B)\to X\to Y\Big)=0
  \]
  Meanwhile, let us also note that any such ideal is invariant:
  \begin{gather*}
    \bra{X}\ker(A\to B)\ket{X}\subset\ker(A\to B):\TAB (A\to B)\Big(\bra{X}\ker(A\to B)\ket{X}\Big) = \\[2\jot]
    = (Y\from X)\bra{X}\cdot(A\to B)\ker(A\to B)\cdot(Y\from X)\ket{X}=0.
  \end{gather*}
  Conversely, suppose another morphism annihilates from the left
  \begin{gather*}
    \begin{tikzcd}[column sep=1cm]
        (W,D)\rar & (X,A)
    \end{tikzcd}:\TAB
    (W\to X\to Y)=0,\TAB (D\to A\to B)=0.
  \end{gather*}
  (The annihilation suffices on coefficient algebras due to our observation above.)
  Trivially, we have the desired inclusion at the level of coefficient algebras:
  \[
    (D\to A\to B)=0\TAB\implies\TAB D\to\ker(A\to B)
  \]
  On the other hand, recall that the induced morphism on the support ideal coincides with the morphism at the level of coefficient algebras.
  As such we necessarily also have an inclusion into the kernel,
  \begin{TIKZCD}
    \braket{W|W} \rar[dashed]\dar& \ker(A\to B)\rar\dar[equal] &A\dar[equal] \\
    D \rar& \ker(A\to B)\rar &A.
  \end{TIKZCD}
  We however have the equivalance (using the characterization \eqref{INVARIANCE-IMAGE}):
  \[
    W\to X\ker(A\to B)\iff \braket{W|W}\to \ker(A\to B).
  \]
  As such we obtain also the desired inclusion at the level of correspondences,
  \[
    D\to\ker(A\to B)\TAB\implies\TAB W\to X\ker(A\to B).
  \]
  So there exists also a unique factorization over the kernel correspondence above.
  Every morphism thus admits a kernel given by the above kernel morphism.

  We next verify that each such morphism is indeed some kernel,
  namely the kernel from the short exact sequence in the proposition:
  \begin{gather*}
    \begin{tikzcd}[column sep=1cm]
      0\rar& (XK,K)\rar& (X,A)\rar& \left(\frac{X}{XK},\frac{A}{K}\right) \rar&0
    \end{tikzcd}:\\[2\jot]
    \ker\left(\begin{tikzcd}[column sep=0.5cm]
      (X,A)\rar& \left(\frac{X}{XK},\frac{A}{K}\right)
    \end{tikzcd}\right)=
    \begin{tikzcd}[column sep=0.5cm]
      (XK,K)\rar& (X,A).
    \end{tikzcd}
  \end{gather*}
  For this let us first verify that the quotient gives a well-defined correspondence
  \[
    \left(\tfrac{X}{XK}\right)^*\left(\tfrac{X}{XK}\right)\subset\left(\tfrac{A}{K}\right),\TAB
    \left(\tfrac{X}{XK}\right)\left(\tfrac{A}{K}\right)\subset\left(\tfrac{X}{XK}\right),\TAB
    \left(\tfrac{A}{K}\right)\left(\tfrac{X}{XK}\right)\subset\left(\tfrac{X}{XK}\right).
  \]
  In other words we need to verify the relations
  \[
    (XK)^*X\subset K,\TAB X^*(XK)\subset K,\TAB\ldots,\TAB (K\subset A)X\subset(XK).
  \]
  The only nontrivial one here (which is not guaranteed automatically) is the last one which precisely calls for hereditary ideals (equivalently invariant ideals)
  \[
    (K\subset A)X= KX\subset XK\TAB (\iff KX\subset XK)\hspace{-2cm}
  \]
  and so the quotient gives a well-defined correspondence.\\
  Let us now get to its kernel. We already found the unique (categorical) kernel of morphisms above. As such the desired equality now easily follows from
  \begin{gather*}
    \ker\left(\begin{tikzcd}[column sep=0.5cm]
      (X,A)\rar& \left(\frac{X}{XK},\frac{A}{K}\right)
    \end{tikzcd}\right)=
    \Big(X\ker\left(A\to \tfrac{A}{K}\right),\ker\left(A\to \tfrac{A}{K}\right)\Big)=(XK,K).
  \end{gather*}
  So far about kernel morphisms for correspondences.
  Let us now get to cokernel morphisms. We first verify that each cokernel is also of the special form as above
  \begin{gather*}
    \begin{tikzcd}[column sep=1cm]
      0\rar& (XK,K)\rar& (X,A)\rar& \left(\frac{X}{XK},\frac{A}{K}\right) \rar&0
    \end{tikzcd}:\\[2\jot]
    \coker\left(\begin{tikzcd}[column sep=0.5cm]
      (XK,K)\rar& (X,A)
    \end{tikzcd}\right)=
    \begin{tikzcd}[column sep=0.5cm]
      (X,A)\rar& \left(\frac{X}{XK},\frac{A}{K}\right).
    \end{tikzcd}
  \end{gather*}
  Clearly the quotient annihilates the kernel (using our obersation \eqref{KERNEL-OBSERVATION}):
  \[
    \left(K\to A\to \tfrac{A}{K}\right)=0\TAB\implies\TAB (XK\to X\to \tfrac{X}{XK})=0
  \]
  Conversely, given any other annihilating morphism say
  \[
    \begin{tikzcd}[column sep=1cm]
        (X,A)\rar{(\tau,\phi)} & (Y,B)
    \end{tikzcd}:\TAB(XK\to X\to Y)=0,\TAB (K\to A\to B)=0.
  \]
  Then both morphisms factor uniquely over the quotient (as linear maps):
  \begin{TIKZCD}[column sep=1.4cm]
    0\rar& (XK,K)\rar\dar& (X,A)\rar\dar[swap]{(\tau,\phi)}& \left(\frac{X}{XK},\frac{A}{K}\right) \rar\dar[dashed]&0 \\
    0\rar& 0\rar& (Y,B)\rar[equal]& (Y,B) \rar&0.
  \end{TIKZCD}
  So it remains to verify that the factorization defines a morphism of correspondences.
  That however basically follows from the original morphism (using congruence classes):
  \begin{gather*}
    \phi\Big((x+XK)^*(y+XK)\Big)= \phi(x^*y)=\tau(x)^*\tau(y)=\tau(x+XK)^*\tau(y+XK),\\
    \tau\Big((x+XK)(a+K)\Big) = \tau(xa) = \tau(x)\phi(a) = \tau(x+XK)\phi(a+K).
  \end{gather*}
  Recall here that the coherence with the left action follows automatically,
  \[
    \implies\tau\Big((a+K)(x+XK)\Big) = \phi(a+K)\tau(x+XK).
  \]
  So we have found the desired cokernel:
  \[
  \coker\left(\begin{tikzcd}[column sep=0.5cm]
    (XK,K)\rar& (X,A)
  \end{tikzcd}\right)
  = \left(\tfrac{X}{XK},\tfrac{A}{K}\right).
  \]
  We now wish to find the cokernel for general morphisms:
  \[
    \coker(\begin{tikzcd}[column sep=1cm]
        (X,A)\rar & (Y,B)
    \end{tikzcd})=(?,?)
  \]
  For this we may now make use of the following relation to our advantage:\linebreak
  The kernel and cokernel operator satisfy the Galois connection
  (verbatim from \cite[section VIII.1]{MACLANE} and confer further \cite[chapter~1~and~2]{FREYD-BOOK}):
  \[
    \ker\coker\ker=\ker,\TAB\TAB\ker\ker=0=\coker\coker,\TAB\TAB\coker\ker\coker=\coker.
  \]
  On the other hand, we have already found the form of kernel morphisms:
  \[
    \ker\Bigl(~(Y,B)\to(?,?)~\Bigr) = (YL,L)\to (Y,B).
  \]
  as well as the cokernel for kernel morphisms
  \[
    \coker(\begin{tikzcd}[column sep=0.5cm]
        (YL,L)\rar & (Y,B)
    \end{tikzcd}) =
    \begin{tikzcd}[column sep=0.5cm]
        (Y,B)\rar & \left(\tfrac{Y}{YL},\tfrac{B}{L}\right)
    \end{tikzcd}.
  \]
  We may thus combine these with the Galois connection between the kernel and cokernel operator to find the necessary shape of cokernel morphisms:
  \begin{gather*}
    (?,?)=\coker
    (\begin{tikzcd}[column sep=0.5cm]
        (X,A)\rar & (Y,B)
    \end{tikzcd}) = \\
    = \coker\ker\coker
    (\begin{tikzcd}[column sep=0.5cm]
        (X,A)\rar & (Y,B)
    \end{tikzcd})
    = \left(\tfrac{Y}{YL},\tfrac{B}{L}\right).
  \end{gather*}
  So we are left with finding the invariant ideal which determines the quotient.
  Consider for this the factorization over the image (kernel of cokernel)
  \begin{TIKZCD}[column sep=1.4cm]
    0\rar& (X,A)\rar[equal]\dar[dashed]& (X,A)\rar\dar& 0 \rar\dar&0 \\
    0\rar& (YL,L)\rar[swap]& (Y,B)\rar& \left(\frac{Y}{YL},\frac{B}{L}\right) \rar&0.
  \end{TIKZCD}
  As such the invariant ideal (which determines the kernel) necessarily contains the image of the coefficient algebra
  \[
    \begin{tikzcd}[column sep=0.5cm]
        (X,A)\rar & (YL,L)
    \end{tikzcd}
    \TAB\implies\TAB
    (A\to L\to B)
  \]
  and is necessarily also the smallest such ideal (generated by the image)
  \[
    L = BAB + \bra{Y}BAB\ket{Y}+\ldots = {\sum}_{n\geq0} \bra{Y^n}BAB\ket{Y^{n}}\ILEQ B.
  \]
  Indeed any larger invariant ideal defines nothing but a quotient beyond:
  \begin{TIKZCD}[column sep=1.3cm]
    0\rar& (YL,L)\rar\dar& (Y,B)\rar\dar[equal]& \left(\frac{Y}{YL},\frac{B}{L}\right) \rar\dar&0 \\
    0\rar& (YL',L')\rar& (Y,B)\rar& \left(\frac{Y}{YL'},\frac{B}{L'}\right) \rar&0
  \end{TIKZCD}
  So we have also found the cokernel for general morphisms.
  For the final assertion we note that every cokernel is onto as easily seen from their special form
  \[
    \left(\begin{tikzcd}[column sep=small]
      X\rar&\tfrac{X}{XK}\rar&0
    \end{tikzcd}\right)
    \TAB\text{and}\TAB
    \left(\begin{tikzcd}[column sep=small]
      A\rar&\tfrac{A}{K}\rar&0
    \end{tikzcd}\right).
  \]
  For the converse consider a surjective morphism $(X,A)\to(Y,B):$
  \[
    \left(\begin{tikzcd}[column sep=small]
      X\rar& Y\rar&0
    \end{tikzcd}\right)
    \TAB\text{and}\TAB
    \left(\begin{tikzcd}[column sep=small]
      A\rar& B\rar&0
    \end{tikzcd}\right).
  \]
  Recall from above that the kernel and cokernel operator define a Galois connection
  and so there is no choice for our morphism (to define a cokernel) than to arise as its own coimage (cokernel of kernel)
  \begin{gather*}
    (X,A)\to(Y,B) = \coker\Bigl(~\mathrm{some~morphism}~\Bigr)\\[2\jot]
                  = \coker\ker\coker\Bigl(~\mathrm{some~morphism}~\Bigr) = \coker\ker\Bigl(~(X,A)\to(Y,B)~\Bigr).
  \end{gather*}
  Recall for this the factorisation over its coimage (as outlined before):
  \begin{TIKZCD}[column sep=1.3cm]
    0\rar& (XK,K)\rar{\ker}\dar& (X,A)\rar{\coker\ker}\dar& \left(\frac{X}{XK},\frac{A}{K}\right) \rar\dar[dashed]&0 \\
    0\rar& 0\rar& (Y,B)\rar[equal]& (Y,B) \rar&0
  \end{TIKZCD}
  So we need to verify that the factorisation defines an isomorphism.\\
  For this we obtain as our morphism is onto (on the coefficient algebra):
  \begin{TIKZCD}[column sep=large,row sep=0.7cm]
    0\rar& K\rar\dar[equal]& A\rar\dar[equal]&A/K\rar\dar[equal]&0 \\
    0\rar& K\rar& A\rar&B\rar&0
  \end{TIKZCD}
  So the factorisation defines an isomorphism on the level of coefficient algebras.\\
  As a consequence, the factorisation is also faithful on the correspondence (recall for instance proposition \ref{HIGHER-MIXED}):
  \[
    \ker\left(\SHORTTIKZCD{[column sep=small]\tfrac{A}{K}\rar&B}\right)=0
    \TAB\implies\TAB
    \ker\left(\SHORTTIKZCD{[column sep=small]\tfrac{X}{XK}\rar&Y}\right)=0.
  \]
  On the other hand, the factorisation is also onto (for the correspondence)
  \[
    \im\left(\begin{tikzcd}[column sep=small]
      \tfrac{X}{XK}\rar& Y
    \end{tikzcd}\right) =
    \im\left(\begin{tikzcd}[column sep=small]
      X\rar&\tfrac{X}{XK}\rar& Y
    \end{tikzcd}\right) =
    \im\left(\begin{tikzcd}[column sep=small]
      X\rar& Y
    \end{tikzcd}\right) = Y
  \]
  and as such the factorisation defines an isomrphism as desired.\\
  That is every surjective morphism defines a cokernel morphism as well.
\end{proof}

%%%%%%%%%%%%%%%%%%%%%%%%%%%%%%%%%%%%%%
%%  GRAPH EXAMPLE: QUOTIENT GRAPHS  %%
%%%%%%%%%%%%%%%%%%%%%%%%%%%%%%%%%%%%%%
Let us identify such quotients in the context of directed graphs:
\begin{BREAKEXM}[Graph correspondences: quotient graphs]
  \label{GRAPH-CORRESPONDENCES:QUOTIENT-GRAPHS}
  Consider a graph correspondence as in Example \ref{GRAPH-CORRESPONDENCES:TENSOR-POWERS}
  \[
    X=\ell^2\Bigl(~E=\edges~\Bigr),\TAB A=c_0\Bigl(~\ver=\mathrm{vertices}~\Bigr)
  \]
  and its hereditary ideals given by hereditary collections (as in Example \ref{GRAPH-CORRESPONDENCES:HEREDITARY-IDEALS})
  \begin{gather*}
    K=c_0\Bigl(~S\subset \mathrm{vertices}~\Bigr)\ILEQ A:\TAB\TAB SE\subset ES.
  \end{gather*}
  It is already clear that its quotients themselves arise as a graph,
  simply since their coefficient algebras define discrete direct sums of vertices as in \ref{GRAPH-CORRESPONDENCES:TENSOR-POWERS}:
  \begin{TIKZCD}
    0\rar&[-0.5cm] K=c_0(S)\rar& A=c_0(V)\rar& \frac{A}{K}= c_0\Bigl(~T:=V\setminus S~\Bigr) \rar&[-0.5cm] 0.
  \end{TIKZCD}
  And indeed we may now simply reveal the quotient as
  \begin{TIKZCD}
    0\rar&
    XK=\ell^2(ES)\rar&
    X=\ell^2(E)\rar&
    \frac{X}{XK}=\ell^2(TE)\rar& 0.
  \end{TIKZCD}
  Thus its quotients simply arise as \textbf{the complementary graphs.}
\end{BREAKEXM}

%%%%%%%%%%%%%%%%%%%%%%%%%%%%%%%
%%  QUOTIENT: FAITHFUL REPR  %%
%%%%%%%%%%%%%%%%%%%%%%%%%%%%%%%
We have now found everything about kernel and cokernel morphisms.\\
With this in our toolbox, we now proceed to covariances.
For this we note that we may render any representation faithful: Indeed we may simply factor any representation over the quotient correspondence
\begin{TIKZCD}[column sep=1.5cm]
  0\rar& (XK,K)\rar{\ker}\dar& (X,A)\rar\dar& \left(\frac{X}{XK},\frac{A}{K}\right) \rar\dar[dashed]&0 \\
  0\rar& 0\rar[swap]& (B,B)\rar[equal]& (B,B) \rar&0.
\end{TIKZCD}
The resulting factorization is faithful (isometric) on the coefficient algebra and as such also on the entire correspondence (see proposition \ref{HIGHER-MIXED}):
\[
  K=\ker(A\to B):\TAB\TAB\ker\left(\tfrac{A}{K}\to B\right)=0\implies\ker\left(\tfrac{X}{XK}\to Y\right)=0.
\]
So we may always first pass to the (unique) quotient correspondence to render any representation faithful.
This is the first step in the classification of \mbox{gauge-equivariant} representations.
The second step is to understand the possibly occuring covariances in nature. This is captured as an important observation by Katsura below.
For this let us recall Katsura's ideal
\begin{equation}\label{KATSURAS-IDEAL}
  \max(X,A) = \ker(A\act X)^\perp\cap XX^*\ILEQ A
\end{equation}
which we denoted as maximal ideal for reasons which will become clear in the result below,
and we will often drop the dependence on the coefficient algebra for simplicity.
Note also that we have already encountered this ideal in different context on Hilbert bimodules (see Proposition \ref{MAXIMAL-FAITHFUL-ACTION} and \ref{HILBERT-BIMODULES}).
With this ideal in mind let us get to Katsura's observation (which we split as a result on faithful representations and further as one on faithful morphisms in general):

%%%%%%%%%%%%%%%%%%%%%%%%%%%%%%%%%
%%  COVARIANCE: FAITHFUL REPS  %%
%%%%%%%%%%%%%%%%%%%%%%%%%%%%%%%%%
\begin{BREAKPROP}[{First part of \cite[Proposition 3.3]{KATSURA-SEMINAL}}]
  \label{FAITHFUL-REPRESENTATIONS:MAXIMAL-COVARIANCE}
  Consider an embedding into some ambient operator algebra
  \[
    \begin{tikzcd}[column sep=0.5cm]
      (X,A)\subset B
    \end{tikzcd}:\TAB\TAB
    A\subset B\TAB(\implies X\subset B).
  \]
  Then its covariance \eqref{COVARIANCE:DEFINITION} lies perpendicular to the kernel
  \[
    \cov(\begin{tikzcd}[column sep=0.5cm]
      (X,A)\subset B
    \end{tikzcd})\perp\ker(A\act X).
  \]
  In particular it holds for general representations (and factored as above)
  \begin{gather*}
    \begin{tikzcd}[column sep=1.2cm,row sep=1cm]
      (X,A)\rar& \left(\frac{X}{XK},\frac{A}{K}\right) \rar[dashed]&B:
    \end{tikzcd}\\[2\jot]
    0\subset\cov\left(\begin{tikzcd}[column sep=small]
      \left(\frac{X}{XK},\frac{A}{K}\right)\rar&B
    \end{tikzcd}\right) \subset \max\left(\tfrac{X}{XK},\tfrac{A}{K}\right).
  \end{gather*}
  So the range of possible covariances is bounded from above by Katsura's ideal, whence its name and notation as maximal ideal in \eqref{KATSURAS-IDEAL}.
\end{BREAKPROP}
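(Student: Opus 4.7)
The plan is to first prove the perpendicularity claim directly for a faithful embedding, then deduce the general inclusion $\cov\subset\max(X/XK,A/K)$ by applying this to the quotient factorisation. Recall that the containment $\cov\subset\ketbra{X/XK}{X/XK}$ is built into the very definition \eqref{COVARIANCE:DEFINITION}, so the only non-trivial content is the perpendicularity $\cov\perp\ker(A/K\act X/XK)$; combining the two pieces yields $\cov\subset\ker(A/K\act X/XK)^\perp\cap\ketbra{X/XK}{X/XK}=\max(X/XK,A/K)$ by definition \eqref{KATSURAS-IDEAL}.

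For the faithful case $(X,A)\subset B$, I would unpack what $a\in\cov$ means concretely: by Proposition~\ref{HIGHER-MIXED} the embedding, being isometric on $A$, extends isometrically to the compact operators, so an approximating expression $a=\lim_n\sum_i x_i^{(n)}(y_i^{(n)})^*$ in $\COMPACTS(X)$ is equally valid inside $B$ after applying $\tau$. The covariance condition $\phi(a)=\tau(a)$ therefore reads, in the embedding, as the plain equality $a=\lim_n\sum_i x_i^{(n)}(y_i^{(n)})^*$ inside $B$. Now for $b\in\ker(A\act X)$ one has $bX=0$ inside $B$ (as the embedding preserves the left action), and continuity of multiplication gives
\[
  ba \;=\; \lim_n\sum_i (bx_i^{(n)})(y_i^{(n)})^* \;=\; 0.
\]
Both $\cov$ and $\ker(A\act X)$ are selfadjoint ideals --- the latter by Proposition~\ref{MAXIMAL-FAITHFUL-ACTION}, the former since its defining equation is preserved by involution --- so applying the same argument to $a^*,b^*$ and taking adjoints delivers $ab=0$ as well. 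This establishes $\cov\cdot\ker(A\act X)=0$ as required.

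The one delicate step, which I expect to be the main (modest) obstacle, is precisely the passage from the compact-operator approximation to the corresponding equality in $B$. This relies crucially on the isometric extension of the embedding to $\COMPACTS(X)$ provided by Proposition~\ref{HIGHER-MIXED}, and fails without faithfulness --- which is exactly why the general statement must first be routed through the faithful quotient $(X/XK,A/K)\to B$.
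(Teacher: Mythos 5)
Your argument is correct and is essentially the paper's own proof: both multiply a covariance element, identified with its image among the compact operators, by an element of $\ker(A\act X)$, observe that the compact-operator side vanishes since $bX=0$, and use injectivity of $A\subset B$ to conclude; the paper merely packages this via the commutative covariance diagram and the ideal property of $\cov$ (applying the covariance identity to $ba$ itself) rather than via an explicit approximation by elementary compacts. One small correction of emphasis: the extension of $\tau$ to $\COMPACTS(X)$ exists and is contractive for \emph{any} representation by Proposition~\ref{HIGHER-MIXED}, so what faithfulness actually buys is not the passage of the approximation into $B$ but the final implication $\phi(ba)=0\implies ba=0$.
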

\begin{proof}
  We recast the arguments from Katsura in our language.
  For this let us first recall the commutative diagram for the covariance ideal,
  \begin{TIKZCD}[column sep=5cm]
    \cov(X\to B)\subset A\dar\rar & B\dar[equal] \\
    \ketbra{X}{X}\rar & B.
  \end{TIKZCD}
  In our case the representation is also faithful (all the horizontal paths):
  \[
    \hspace{1cm}A\subset B,\TAB\TAB X\subset B,\TAB\TAB XX^*\subset B,\TAB\TAB\ldots
  \]
  On the other hand, the covariance ideal defines an ideal in the coefficient algebra (as we have observed in the previous section). As such we also have
  \[
    \ker(A\act X)\cov(X\to B)\subset \cov(X\to B).
  \]
  So one may place this expression in the commutative diagram above to obtain
  \begin{TIKZCD}[column sep=4cm]
    \ker(A\act X)\cov(X\to B)\dar\rar & B\dar[equal] \\
    \ker(A\act X)\ketbra{X}{X}=0\rar & B
  \end{TIKZCD}
  and simply trace back the desired orthogonality (using the top path).\\
  The remaining points follow from the discussion preceeding the proposition.
\end{proof}

%%%%%%%%%%%%%%%%%%%%%%%%%%%%%%%%%%%%%%%%%
%%  GRAPH EXAMPLE: MAXIMAL COVARIANCE  %%
%%%%%%%%%%%%%%%%%%%%%%%%%%%%%%%%%%%%%%%%%
Let us reveal the maximal covariance in the case of graph algebras:
\begin{BREAKEXM}[Graph correspondences: maximal covariance]
  \label{GRAPH-CORRESPONDENCES:MAXIMAL-COVARIANCE}
  Consider a graph correspondence as in Example \ref{GRAPH-CORRESPONDENCES:TENSOR-POWERS}:
  \[
    X=\ell^2\Big(E=\edges\Big),\TAB A=c_0(\mathrm{vertices}).
  \]
  For these the trivially acting portion (the kernel for the left action) and the compactly acting portion correspond to sources and finite receivers:
  \begin{align*}
    \ker(A\act X)= c_0\Big(a:|a\edges|=0\Big)     &=c_0(\sources),\\[2\jot]
    A\cap XX^*   = c_0\Big(a:|a\edges|<\infty\Big)&=c_0(\finRECEIVERS).
  \end{align*}
  As such the orthogonal complement reads together
  \begin{gather*}
    \max(X,A)=\ker(A\act X)^\perp\cap XX^* =\\[2\jot]
    = c_0\Big(a:0<|a\edges|<\infty\Big)=c_0(\regular).
  \end{gather*}
  That is the maximal covariance corresponds to \textbf{the regular vertices}
  and as such any other covariance ideal corresponds to simply some collection of such.
\end{BREAKEXM}

%%%%%%%%%%%%%%%%%%%%%%%%%%%%%%%%%%%%%
%%  COVARIANCE: GENERAL MORPHISMS  %%
%%%%%%%%%%%%%%%%%%%%%%%%%%%%%%%%%%%%%
We finish this section with an investigation of covariances for morphisms between correspondences (as opposed to representations into operator algebras).
We begin with faithful morphisms between correspondences. As for representations we have (compare Proposition \ref{HIGHER-MIXED}): Being faithful passes from the coefficient algebra to the correspondence (and to any other power):
\[
  \begin{tikzcd}
    (X,A)\to (Y,B)
  \end{tikzcd}:\TAB\TAB
  A\subset B \TAB\implies\TAB X\subset Y.
\]
So the faithful morphisms may be seen as nothing but a subcorrespondence: That is those whose inner product already lies in the subalgebra and similar for the action from either side,
\begin{equation}\label{SUBCORRESPONDENCE:DEFINITION}
  \begin{gathered}
    \braket{Y|Y}\subset B,\TAB BY\subset Y,\TAB YB\subset Y:\\[2\jot]
    X\subset Y,\TAB A\subset B:\TAB\TAB\braket{X|X}\subset A,\TAB
    AX\subset X,\TAB XA\subset X
  \end{gathered}
\end{equation}
while for comparison mixed expressions only satisfy
\[
  \braket{X|Y}\subset B,\TAB BX\subset Y,\TAB XB\subset Y.
\]
Schematically the inclusions for subcorrespondences may look like
\begin{center}
  \begin{tikzpicture}[scale=0.35]
    \draw (-7,-4) rectangle (7,4);
    \draw[fill=gray!10] (0,0) ellipse [x radius=4,y radius=2.5];
    \draw[fill=gray!30,fill opacity=0.8] (-2,0) to (0.5,-1.5) to (3,0) to (0.5,1.5) to (-2,0);
    \node at (0.5,0) {$\braket{X|X}$};
    \node at (-3,0) {$A$};
    \node at (5.5,-3) {$B$};
  \end{tikzpicture}\TAB\TAB
  \begin{tikzpicture}[scale=0.35]
    \draw (-7,-4) rectangle (7,4);
    \draw[fill=gray!10] (0,0) ellipse [x radius=4,y radius=2.5];
    \draw[fill=gray!30,fill opacity=0.8] (-2,-0.2) ellipse [x radius=1.6,y radius=1,rotate=125];
    \node at (-2,-0.2) {$AX$};
    \node at (1.5,0) {$X=XA$};
    \node at (5.5,-3) {$Y$};
  \end{tikzpicture}
\end{center}
which compare to mixed expressions as possibly only
\begin{center}
  \begin{tikzpicture}[scale=0.35]
    \draw (-7,-4) rectangle (7,4);
    \draw[fill=gray!10] (0,0) ellipse [x radius=4,y radius=2.5];
    \draw[fill=gray!30,fill opacity=0.8] (-2,0) to (3,-3) to (5.5,-1.5) to (3,0) to (5.5,1.5) to (3,3) to (-2,0);
    \draw[dashed] (0.5,-1.5) to (3,0) to (0.5,1.5);
    \node at (3,-1.5) {$\braket{X|Y}$};
    \node at (3,1.5) {$\braket{Y|X}$};
    \node at (-3,0) {$A$};
    \node at (5.5,-3) {$B$};
  \end{tikzpicture}\TAB\TAB
  \begin{tikzpicture}[scale=0.35]
    \draw (-7,-4) rectangle (7,4);
    \draw[fill=gray!10] (1.0,0) ellipse [x radius=5.2,y radius=3];
    \draw[fill=gray!10,dashed] (0,0) ellipse [x radius=4,y radius=2.5];
    \draw[fill=gray!30,fill opacity=0.8] (-2.50,0.5) ellipse [x radius=2.7,y radius=1.3,rotate=125];
    \draw[dashed] (-2,-0.2) ellipse [x radius=1.6,y radius=1,rotate=125];
    \node at (-3.3,1.8) {$BX$};
    \node at (1.5,0) {$X\neq$};
    \node at (5.1,0) {$XB$};
    \node at (5.5,-3) {$Y$};
  \end{tikzpicture}.
\end{center}
So one may think of a subcorrespondence as some sort of coherent restriction: Think of global actions and their restrictions to possibly partial actions only.\linebreak
The action of compact operators on the ambient correspondence further reads
\[
  (XX^*)Y\subset X(Y^*Y)\subset XB\subset Y
\]
Altogher, we may thus really think of a subcorrespondence simply as a subspace and subalgebra,
which will be quite convenient also further on.\linebreak
In particular one has (for rather trivial reason)
\[
  (T-T')Y=0\TAB\implies\TAB (T-T')X=0
\]
and as such also for $a\in A$ and $k\in XX^*$,
\begin{equation}
  \label{SUBCORRESPONDENCE:OPERATOR-EQUALITY}
  (a-k)Y=0\TAB\implies\TAB (a-k)X=0.
\end{equation}
This basically trivial implication already verifies Katsura's second observation,
and note how \textbf{thinking in terms of subspaces} and subalgebras paid off:

%%%%%%%%%%%%%%%%%%%%%%%%%%%%%%%%%%%%%%
%%  COVARIANCE: SUBCORRESPONDENCES  %%
%%%%%%%%%%%%%%%%%%%%%%%%%%%%%%%%%%%%%%
\begin{BREAKPROP}[{Second part of \cite[Proposition 3.3]{KATSURA-SEMINAL}}]\label{COVARIANCE:SUBCORRESPONDENCES}
  For a subcorrespondence as in \eqref{SUBCORRESPONDENCE:DEFINITION} the covariance arises as pullback
  \begin{gather*}
    \cov\Big(\begin{tikzcd}[column sep=0.5cm]
      X\subset Y
    \end{tikzcd}\Big) =
    \Bigl\{~a\in A~\Big|~\im(a\act Y)\in \im(XX^*\act Y)~\Bigr\} \\[2\jot]
    = (A\act Y)\inv\Bigl(~\im(A\act Y)\cap\im(XX^*\act Y)~\Bigr)
  \end{gather*}
  which we usually abbreviate simply as their common intersection.\\
  % \[
  %   \Bigl\{~a\in A~\Big|~\ldots~\Bigr\}
  %   = \im(A\act Y)\cap\im(XX^*\act Y).
  % \]
  As a consequence it follows the characterization for trivial covariance
  \[
    \cov\Big(\begin{tikzcd}[column sep=0.5cm]
      (X,A)\subset B
    \end{tikzcd}\Big)=0\TAB\iff\TAB \im(A\to B)\cap\im\Big(XX^*\to B\Big)=0
  \]
  which reveals the familiar slogan \textbf{for the Toeplitz representation:}\\
  Those whose coefficient algebra has trivial intersection with compact operators.
\end{BREAKPROP}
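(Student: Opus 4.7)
The plan is to take the definition \eqref{COVARIANCE:DEFINITION} of the covariance and unpack it in the subcorrespondence situation, where by Proposition \ref{HIGHER-MIXED} both the inclusion $A\subset B$ and the induced map $\ketbra{X}{X}\to B$ are faithful: this reduces the difference morphism to asking whether an element $a\in A$ agrees with some $k=\sum_{n}x_n y_n^*\in XX^*$ as operators on the ambient correspondence $Y$. The proposition is then a clean reformulation of this condition, and the only actual content beyond unpacking is the elementary implication \eqref{SUBCORRESPONDENCE:OPERATOR-EQUALITY}.

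For the forward containment I would argue: if $a\in\cov$, there is a compact operator $k\in XX^*$ realising $a$'s left action on $X$, with $\phi(a)=\tau(k)$ in the ambient $B$. Multiplying this equality on the right by any $y\in Y$ immediately gives $a\act Y=k\act Y$ as operators, so $\im(a\act Y)$ lies in $\im(XX^*\act Y)$, putting $a$ in the claimed pullback.

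The backward containment is where the subcorrespondence assumption earns its keep via \eqref{SUBCORRESPONDENCE:OPERATOR-EQUALITY}. Suppose $\im(a\act Y)=\im(k\act Y)$ for some $k\in XX^*$; equivalently $(a-k)\act Y=0$. The observation \eqref{SUBCORRESPONDENCE:OPERATOR-EQUALITY} — which is only valid because $X\subset Y$ is a subcorrespondence — then collapses this to $(a-k)\act X=0$, so $a$ already acts on $X$ as the compact operator $k$, placing $a$ in the domain $A\cap\ketbra{X}{X}$ of the difference morphism and simultaneously witnessing that the difference vanishes on $a$. Hence $a\in\cov$.

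The characterization of trivial covariance then drops out immediately: $\cov=0$ says no nonzero $a\in A$ has its action on $Y$ lying in $\im(XX^*\act Y)$, which recovers the stated slogan that the images of $A$ and $XX^*$ inside the ambient algebra intersect trivially. The only genuine obstacle in the argument is the backward direction, and it is taken care of for free by \eqref{SUBCORRESPONDENCE:OPERATOR-EQUALITY} — exactly as the author has already advertised as the pay-off of thinking of correspondences as subspaces and subalgebras.
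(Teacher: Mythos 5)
Your proposal is correct and is exactly the paper's argument: the paper's proof consists of the single line that the statement ``is nothing but the trivial implication \eqref{SUBCORRESPONDENCE:OPERATOR-EQUALITY}'', and your write-up simply makes explicit the two containments that this implication (together with unwinding the definition \eqref{COVARIANCE:DEFINITION} of the difference morphism) yields. The only cosmetic point is that for a subcorrespondence $(X,A)\subset(Y,B)$ the equality $\phi(a)=\tau(k)$ should be read as an equality of operators acting on $Y$ (i.e.\ in $\ADJOINTS(Y)$) rather than literally ``in $B$'', but this is precisely how you then use it, so nothing is affected.
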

\begin{proof}This is nothing but the trivial implication \eqref{SUBCORRESPONDENCE:OPERATOR-EQUALITY}.
  % We recast the argument from Katsura viewn from subcorrespondences:
  % Indeed this follows as in the absolutely trivial implication \eqref{SUBCORRESPONDENCE:OPERATOR-EQUALITY}:
  % \[
  %   a\in A,\TAB k\in XX^*:\TAB\TAB(a-k)Y=0\TAB\implies\TAB (a-k)X=0.
  % \]
  % As such we obtain the factorization (and whence the covariance)
  % \begin{TIKZCD}[column sep=2cm,row sep=1cm]
  %     \im(A\to B)\cap\im(XX^*\to YY^*)\rar\dar[dashed] & B\dar \\
  %     \ADJOINTS(X)\supset XX^*\rar & YY^*\subset\ADJOINTS(Y).
  % \end{TIKZCD}
  % This verifies that the right-hand pullback indeed lies inside the covariance.\\
  % The forward inclusion, that the covariance lies inside the pullback, is trivial.
\end{proof}

%%%%%%%%%%%%%%%%%%%%%%%%%%%%%%%%%%%%%%%%%%
%%  SHORT-EXACT-SEQUENCE: ADJOINTABLES  %%
%%%%%%%%%%%%%%%%%%%%%%%%%%%%%%%%%%%%%%%%%%
We continue with the covariance for kernel and cokernel morphisms.\linebreak
We begin for this with the following fairly standard result due to Kajiwara--Pinzari--Watatani
which will become particularly useful in later context:

\begin{BREAKPROP}[{see \cite[Proposition~4.2]{KAJIWARA-PINZARI-WATATANI}}]
  \label{SHORT-EXACT:ADJOINTABLES}
  A short exact sequence of correspondences (see theorem~\ref{THEOREM:KERNEL-COKERNEL-MORPHISMS})
  \begin{TIKZCD}[column sep=1cm]
    0\rar& (XK,K)\rar& (X,A)\rar& \left(\frac{X}{XK},\frac{A}{K}\right) \rar&0
  \end{TIKZCD}
  induces a morphism (of operator algebras)
  \[
    \begin{tikzcd}[column sep=1cm]
      \ADJOINTS(X)\rar& \ADJOINTS\left(\frac{X}{XK}\right)
    \end{tikzcd}:\TAB\TAB
    T(x+XK):= Tx +XK
  \]
  whose kernel admits the equivalent characterization
  \begin{equation}\label{INVERSE-INVARIANCE}
    X\inv(K) := \Big\{X^*TX\subset K\Big\}
    = \Big\{TX\subset XK\Big\}
    = \ker\Bigl(\ADJOINTS(X)\to \ADJOINTS\left(\tfrac{X}{XK}\right)\Bigr).
  \end{equation}
  Further the morphism commutes with the left action by the coefficient algebra,
  \begin{TIKZCD}[column sep=1.5cm,row sep=0.8cm]
    0\rar& K\rar& A\rar\dar& \frac{A}{AK}\rar\dar&0 \\
    & & \ADJOINTS(X)\rar& \ADJOINTS\left(\tfrac{X}{XK}\right).
  \end{TIKZCD}
\end{BREAKPROP}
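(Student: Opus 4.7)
The plan is first to show that the formula $T(x + XK) := Tx + XK$ defines a $*$-homomorphism $\ADJOINTS(X) \to \ADJOINTS(X/XK)$, and then to read off its kernel in both of the stated forms. Well-definedness amounts to $T(XK) \subset XK$; since adjointable operators are right $A$-linear, we have $T(xk) = (Tx)k \in XK$ for $x \in X$ and $k \in K$, and this extends linearly and by continuity to all of $XK$. Boundedness of the induced operator is then immediate from the previous lemma identifying the quotient Hilbert module norm with the Banach quotient norm. Adjointability comes from the short computation
\[
  \braket{T(x+XK) | y+XK} = \braket{Tx | y} + K = \braket{x | T^*y} + K = \braket{x+XK | T^*(y+XK)},
\]
so the induced operator of $T$ has as its adjoint the induced operator of $T^*$. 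Linearity, multiplicativity and preservation of involution of the induced assignment are then automatic.

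For the kernel, the identification $\ker(\ADJOINTS(X) \to \ADJOINTS(X/XK)) = \{T : TX \subset XK\}$ is immediate from the definition of the induced operator. For the remaining equality with $\{T : X^*TX \subset K\}$ I would appeal to the characterisation \eqref{INVARIANCE-IMAGE} from the invariant--hereditary lemma, namely $XK = \{x \in X : X^*x \subset K\}$. The forward direction is then the inclusion $X^*TX \subset X^*(XK) = (X^*X)K \subset AK \subset K$. Conversely, if $X^*TX \subset K$, then each $Tx$ satisfies $X^*(Tx) \subset K$, hence $Tx \in XK$ by the characterisation, giving $TX \subset XK$ as desired.

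For the final commutativity with the left action, it suffices to show that $K$ lies in the kernel of the composite $A \to \ADJOINTS(X) \to \ADJOINTS(X/XK)$, whence the composite factors through $A/K$. But this is just invariance one more time: for $k \in K$ we have $kX \subset KX \subset XK$, so $k$ acts as zero on $X/XK$. None of the steps is really difficult; the only point that requires some care is keeping straight the parallel readings of $XK$ --- as a closed subspace of $X$, as the action of $K$, and as the inverse image $\{x \in X : X^*x \subset K\}$ --- all reconciled by the invariant--hereditary lemma and its characterisation \eqref{INVARIANCE-IMAGE}.
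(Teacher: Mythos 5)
Your proposal is correct and follows essentially the same route as the paper: restriction of $T$ to $XK$ via right $A$-linearity, descent to the quotient, the same one-line adjointability computation, the kernel identification via the characterisation \eqref{INVARIANCE-IMAGE} of $XK$ as $\{x : X^*x\subset K\}$, and commutativity with the left action from hereditariness $KX\subset XK$. You merely spell out the two inclusions for the kernel equality that the paper leaves as \enquote{immediate}, and add an explicit word on boundedness via the quotient-norm lemma; both are harmless elaborations of the same argument.
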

\begin{proof}
  In order to verify the induced morphism let us first note the following:
  Adjointable operators restrict to kernel correspondences:
  \[
    T\in\ADJOINTS(X)\implies T\in\ADJOINTS(XK):\TAB\TAB T(XK)\subset(TX)K\subset XK.
  \]
  As such we obtain a commutative diagram and whence the adjointable operator descends also to the quotient,
  \begin{TIKZCD}[column sep=1.5cm]
    0\rar& XK\rar\dar\drar[phantom]{T}& X\rar\dar& \frac{X}{XK} \rar\dar[dashed]&0 \\
    0\rar& XK\rar& X\rar& \frac{X}{XK} \rar&0.
  \end{TIKZCD}
  The resulting operator is adjointable as one easily verifies
  \[
    \braket{T(x+XK)\mid y+XK}=\braket{TX\mid y}=\braket{x\mid T^*y}=\braket{x+XK\mid T^*(y+XK)}.
  \]
  As such we got the desired morphism (of operator algebras)
  \[
    \begin{tikzcd}[column sep=1cm]
      \ADJOINTS(X)\rar& \ADJOINTS\left(\frac{X}{XK},\frac{A}{K}\right)
    \end{tikzcd}:\TAB\TAB
    T(x+XK):= Tx +XK.
  \]
  We meanwhile note that the morphism is generally not onto!\\
  Moreover, the morphism clearly commutes with the left action by the coefficient algebra as one easily verifies (think in terms of subspaces)
  \[
    (a+K)(x+XK)=ax +XK = a(x+XK).
  \]
  The remaining claim about the kernel immediately follows from \eqref{INVARIANCE-IMAGE}.
\end{proof}

%%%%%%%%%%%%%%%%%%%%%%%%%%%%%%%%%%%%%%
%%  SHORT-EXACT-SEQUENCE: COMPACTS  %%
%%%%%%%%%%%%%%%%%%%%%%%%%%%%%%%%%%%%%%
We continue with the analogous result for compact operators as for example in Fowler--Muhly--Raeburn
(for which we provide a slightly simplified proof):

\begin{BREAKPROP}[{see \cite[Lemma~2.6]{FOWLER-MUHLY-RAEBURN}}]
  \label{SHORT-EXACT:COMPACTS}
  A short exact sequence of correspondences (see theorem~\ref{THEOREM:KERNEL-COKERNEL-MORPHISMS})
  \begin{TIKZCD}[column sep=1cm]
    0\rar& (XK,K)\rar& (X,A)\rar& \left(\frac{X}{XK},\frac{A}{K}\right) \rar&0
  \end{TIKZCD}
  induces a short exact sequence at the level of compact operators
  \begin{TIKZCD}[column sep=0.8cm,row sep=1cm]
    0\rar& (XK)(XK)^*=XKX^*\rar& XX^*\rar\dar& \left(\frac{X}{XK}\right)\left(\frac{X}{XK}\right)^* \rar\dar&0 \\
    && \ADJOINTS(X)\rar & \ADJOINTS\left(\frac{X}{XK}\right)
  \end{TIKZCD}
  which restricts from the morphism of adjointable operators as indicated.
\end{BREAKPROP}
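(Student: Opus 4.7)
The plan is to verify the three ingredients: first that the morphism of adjointable operators from Proposition \ref{SHORT-EXACT:ADJOINTABLES} indeed sends $XX^*$ into $(X/XK)(X/XK)^*$, second that this restricted map is surjective, and third that its kernel is precisely $XKX^*$. Throughout I will think of compact operators through the Dirac formalism of Proposition \ref{DIRAC-CALCULUS}, where everything becomes essentially a formal computation on rank-one elementary tensors and closure-linear combinations thereof.

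For the first two points, I would simply compute on rank-one operators. Evaluating the induced morphism from Proposition \ref{SHORT-EXACT:ADJOINTABLES} on $\ket{x}\bra{y}$ acting on a class $z+XK$ gives $\ket{x}\bra{y}z + XK = \ket{x+XK}\bra{y+XK}(z+XK)$, so elementary compact operators are sent to elementary compact operators on the quotient:
\[
  \ket{x}\bra{y}\longmapsto\ket{x+XK}\bra{y+XK}.
\]
Taking linear combinations and closures shows the morphism restricts to a map $XX^*\to(X/XK)(X/XK)^*$, and since the target is spanned by elementary compact operators of the form $\ket{x+XK}\bra{y+XK}$, this restriction is onto. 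The identification $(XK)(XK)^*=XKX^*$ is a routine consequence of $K=KK$ (via an approximate identity for the ideal).

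The main step is identifying the kernel. The forward inclusion $XKX^*\subset\ker$ is immediate since $\ket{xk}\bra{y}$ has first component $xk\in XK$, whence $(xk+XK)\bra{y+XK}=0$. For the converse, suppose $T\in XX^*$ lies in the kernel, so by the characterization \eqref{INVERSE-INVARIANCE} from Proposition \ref{SHORT-EXACT:ADJOINTABLES} we have $TX\subset XK$. Now pick an approximate identity $(e_\lambda)$ for the compact operators $XX^*$ of the form $e_\lambda=\sum_i\ket{x_i}\bra{x_i}$; since $T\in XX^*$ we have $T=\lim_\lambda Te_\lambda$. But
\[
  Te_\lambda=\sum_i\ket{Tx_i}\bra{x_i}\in XK\cdot X^*=XKX^*,
\]
and $XKX^*$ is closed, so $T\in XKX^*$. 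This is the only delicate point and it is handled by the standard approximate-identity trick.

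Putting these three pieces together yields the short exact sequence in the statement, with the connecting maps realized as the restriction of the morphism $\ADJOINTS(X)\to\ADJOINTS(X/XK)$ from Proposition \ref{SHORT-EXACT:ADJOINTABLES}, as desired.
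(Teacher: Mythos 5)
Your proposal is correct and follows essentially the same route as the paper: compute on rank-one operators to get well-definedness and surjectivity, then identify the kernel via the characterization \eqref{INVERSE-INVARIANCE} from Proposition \ref{SHORT-EXACT:ADJOINTABLES} and a factorization through compacts. The only cosmetic difference is in the last step, where the paper sandwiches the kernel as $XX^*\bigl(X\inv(K)\cap XX^*\bigr)XX^*\subset X\bigl(X^*X\inv(K)X\bigr)X^*\subset XKX^*$ using Cohen--Hewitt, while you multiply by a one-sided approximate unit $e_\lambda=\sum_i\ket{x_i}\bra{x_i}$ and use $TX\subset XK$ directly; both are valid instances of the same idea.
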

\begin{proof}Clearly, the induced morphism on compact operators commutes with the morphism of adjointable operators since (think in terms of subspaces)
  \[
    (x+XK)(y+XK)^*(z+XK)= xy^*z +XK = xy^*(z+XK).
  \]
  Next we already know that the kernel correspondence (as a subcorrespondence) defines an embedding at the level of compact operators:
  \[
    (XK,K)\subset(X,A)\TAB\implies\TAB (XK)(XK)^*\subset XX^*.
  \]
  For kernel correspondences these now further define an ideal as for example
  \[
    \Big((XK)(XK)^*=XKX^*\Big)XX^*=XK(X^*XX^*)= XKX^*.
  \]
  On the other hand, the right-hand morphism is also clearly onto since
  \[
    \begin{tikzcd}[column sep=0.5cm]
      X\rar&\frac{X}{XK}\rar&0
    \end{tikzcd}
    \TAB\implies\TAB
    \begin{tikzcd}[column sep=0.5cm]
      XX^*\rar& \left(\frac{X}{XK}\right)\left(\frac{X}{XK}\right)^* \rar&0.
    \end{tikzcd}
  \]
  Regarding exactness in the middle, we may now invoke the characterization of the kernel from the previous proposition, which reads for compact operators
  \[
    X\inv(K)\cap XX^* = \ker\Big(\begin{tikzcd}[column sep=0.5cm]
      XX^*\rar&\left(\frac{X}{XK}\right)\left(\frac{X}{XK}\right)^*
    \end{tikzcd}\Big).
  \]
  While the ideal clearly lies inside the kernel (left as an exercise for the reader)
  the converse inclusion now easily follows from the above description:
  \begin{gather*}
    {X\inv(K)\cap XX^*}
    = XX^*\Big({X\inv(K)\cap XX^*}\Big)XX^* \\
    \subset X\Big(X^*X\inv(K) X\Big)X^*\subset XKX^*.
  \end{gather*}
  So we have found the desired exactness for compact operators.
\end{proof}

%%%%%%%%%%%%%%%%%%%%%%%%%%%%%%%%%%%%%
%%  COVARIANCE: KERNEL + COKERNEL  %%
%%%%%%%%%%%%%%%%%%%%%%%%%%%%%%%%%%%%%
We may now easily derive the desired covariance for kernel and cokernel morphisms
(with the previous results in mind)

\begin{PROP}
  \label{COVARIANCE:KERNEL-COKERNEL}
  Kernel and cokernel morphisms have full covariance,
  \begin{gather*}
    \begin{tikzcd}[column sep=1cm]
      0\rar& (XK,K)\rar& (X,A)\rar& \left(\frac{X}{XK},\frac{A}{K}\right) \rar&0
    \end{tikzcd}:\\[2\jot]
    \begin{aligned}
      \cov\Big(\begin{tikzcd}[column sep=1cm]
        (XK,K)\to (X,A)
      \end{tikzcd}\Big)
      &=K\cap XKX^*,\\
      \cov\Big(\begin{tikzcd}[column sep=1cm]
        (X,A)\to \left(\frac{X}{XK},\frac{A}{K}\right)
      \end{tikzcd}\Big)
      &=A\cap XX^*.
    \end{aligned}
  \end{gather*}
  This stands in contrast to the covariance for representations:\\
  The covariance for representations is bounded by Katsura's ideal \eqref{KATSURAS-IDEAL}.
\end{PROP}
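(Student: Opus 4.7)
The plan is to treat the two cases separately, in both reducing to the basic tools already established: for the kernel morphism I would invoke the subcorrespondence description of covariance (Proposition~\ref{COVARIANCE:SUBCORRESPONDENCES}), and for the cokernel morphism I would invoke the short exact sequences at the levels of adjointable and compact operators (Propositions~\ref{SHORT-EXACT:ADJOINTABLES} and~\ref{SHORT-EXACT:COMPACTS}).

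For the \textbf{kernel morphism} $(XK,K)\to(X,A)$, I would first note that by Proposition~\ref{HIGHER-MIXED} this morphism is isometric on the coefficient algebra and hence on the entire correspondence, so it realises $(XK,K)$ as a subcorrespondence of $(X,A)$ in the sense of \eqref{SUBCORRESPONDENCE:DEFINITION}. Applying Proposition~\ref{COVARIANCE:SUBCORRESPONDENCES} with ambient correspondence $(X,A)$ and subcorrespondence $(XK,K)$, whose compact operators are $(XK)(XK)^*=XKX^*$, then gives immediately
\[
  \cov\Big(\begin{tikzcd}[column sep=0.5cm](XK,K)\to(X,A)\end{tikzcd}\Big)
  = \bigl\{\,k\in K:kX\subset XKX^*\cdot X\,\bigr\},
\]
which is precisely the common intersection $K\cap XKX^*$, taken as a pullback along the left action on $X$ inside $\ADJOINTS(X)$ in the same abbreviated sense adopted by that proposition.

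For the \textbf{cokernel morphism} $(X,A)\to(X/XK,A/K)$, the covariance is by definition contained in $A\cap XX^*$, so only the reverse inclusion needs argument. For any $a\in A\cap XX^*$, its image via the coefficient algebra morphism is $a+K\in A/K$, while its image under the induced compact operator morphism $XX^*\to (X/XK)(X/XK)^*$ from Proposition~\ref{SHORT-EXACT:COMPACTS} is obtained by replacing each factor $x_iy_i^*$ by $(x_i+XK)(y_i+XK)^*$. The commutative square in Proposition~\ref{SHORT-EXACT:ADJOINTABLES} (whose restriction to compact operators is Proposition~\ref{SHORT-EXACT:COMPACTS}) forces both images to act identically on $X/XK$, hence to agree in $\ADJOINTS(X/XK)$. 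Thus every element of $A\cap XX^*$ lies in the covariance, giving the claimed full covariance.

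The only subtle point I anticipate is the bookkeeping of the intersection $K\cap XKX^*$ in the kernel case, since the left action of $K$ on $X$ need not be faithful; but this is precisely the pullback convention already adopted in Proposition~\ref{COVARIANCE:SUBCORRESPONDENCES}, so no separate treatment is required. The concluding contrast with Katsura's bound (Proposition~\ref{FAITHFUL-REPRESENTATIONS:MAXIMAL-COVARIANCE}) then needs no extra argument: that bound relied on the orthogonality $\cov\perp\ker(A\act X)$, a phenomenon which is genuinely available only for \emph{faithful} embeddings into ambient operator algebras, a condition neither kernel nor cokernel morphisms are intended to satisfy.
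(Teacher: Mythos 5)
Your cokernel half is essentially the paper's argument: you combine the commuting squares of Propositions~\ref{SHORT-EXACT:ADJOINTABLES} and~\ref{SHORT-EXACT:COMPACTS} to see that an element of $A\cap XX^*$ and its compact counterpart descend to the same operator on $X/XK$, hence agree in the quotient correspondence. That part is correct and needs no change.

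The kernel half has a genuine gap. Proposition~\ref{COVARIANCE:SUBCORRESPONDENCES} identifies the covariance of $(XK,K)\subset(X,A)$ with the set of $a\in K$ whose action \emph{on the ambient module $X$} coincides with that of some element of $XKX^*$. But the assertion of full covariance concerns the domain of the difference morphism, namely $K\cap(XK)(XK)^*$ computed inside $\ADJOINTS(XK)$: one must show that every $a\in K$ agreeing with some $k\in XKX^*$ \emph{as an operator on $XK$} already agrees with it as an operator on all of $X$. This is the converse of \eqref{SUBCORRESPONDENCE:OPERATOR-EQUALITY}, and it is exactly what your phrase ``taken as a pullback \ldots{} in the same abbreviated sense'' dismisses as bookkeeping. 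It is not a convention: the example immediately following the proposition in the paper (the flip-automorphism subcorrespondence) shows that for a general subcorrespondence the action on the submodule and the action on the ambient module can differ maximally, so the identification cannot be ``immediate.'' What saves the kernel case is the invariance $X^*KX\subset K$: for $a\in K$ and $k\in XKX^*$ one has $X^*(a-k)X\subset X^*KX+(X^*X)K(X^*X)\subset K$, and $(a-k)XK=0$ yields $K\,X^*(a-k)X\,K=0$, which for an element of the ideal $K$ forces $X^*(a-k)X=0$ and hence $(a-k)X=0$. This step is the actual content of the paper's proof for the kernel morphism, and it is missing from your proposal.
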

\begin{proof}
  Note that kernel morphisms define a particular type of subcorrespondences
  and so we find ourselves in the situation of proposition \ref{COVARIANCE:SUBCORRESPONDENCES}:\linebreak
  The nontrivial converse of implication \eqref{SUBCORRESPONDENCE:OPERATOR-EQUALITY} holds for kernel correspondences,
  \[
    a\in K,\TAB k\in XKX^*:\TAB\TAB (a-k)XKX^*=0\TAB\implies\TAB (a-k)X=0.
  \]
  Note that we may equivalently verify the implication
  \[
    KX^*(a-k)X^*K=0\TAB \implies\TAB X^*(a-k)X=0.
  \]
  For this we note the inclusion (due to invariance):
  \[
    X^*(a-k)X \subset X^*(K+ XKX^*)X
    = (X^*KX)+ (X^*X)K(X^*X)\subset K.
  \]
  As such the above implication holds true since for
  \[
    K\Bigl(~X^*(a-k)X\subset K~\Bigr)K=0\TAB\implies\TAB X^*(a-k)X=0
  \]
  so kernel morphisms have full covariance as desired.\\
  Regarding the cokernel morphism, we may combine proposition \ref{SHORT-EXACT:ADJOINTABLES} and \ref{SHORT-EXACT:COMPACTS} to obtain the desired full covariance
  \begin{TIKZCD}[column sep=1.5cm,row sep=1cm]
    & A\cap XX^*\arrow[rr]\arrow[dd]\arrow[dddl,bend right] && \frac{A}{K}\arrow[dd]\arrow[dddl,bend right,dashed] \\\\
    & \ADJOINTS(X)\arrow[rr] && \ADJOINTS\left(\frac{X}{XK}\right)\\
    XX^*\arrow[rr]\arrow[ru] && \left(\frac{X}{XK}\right)\left(\frac{X}{XK}\right)^*.\arrow[ru]
  \end{TIKZCD}
  Concluding that kernel and cokernel morphisms have full covariance.
\end{proof}

%%%%%%%%%%%%%%%%%%%%%%%%%%%%%%%
%%  COVARIANCE: ORGINAL REP  %%
%%%%%%%%%%%%%%%%%%%%%%%%%%%%%%%
Recall next that we may always render representations faithful
by passing to the induced representation on the quotient
\begin{TIKZCD}[column sep=1.5cm]
  0\rar& (XK,K)\rar{\ker}\dar& (X,A)\rar\dar& \left(\frac{X}{XK},\frac{A}{K}\right) \rar\dar[dashed]&0 \\
  0\rar& 0\rar[swap]& (B,B)\rar[equal]& (B,B) \rar&0.
\end{TIKZCD}
The previous result allows us now to further clarify the relation between the covariance on the quotient correspondence (the relevant one) and the covariance for the original representation (the author would like to note that he found this special instance as an observation made by Katsura in \cite{KATSURA-GAUGE-IDEALS}):

\begin{PROP}
  \label{COVARIANCE:ORIGINAL=QUOTIENT-PULLBACK}
  Consider a cokernel morphism (simply some onto morphism as observed in proposition \ref{THEOREM:KERNEL-COKERNEL-MORPHISMS}) followed by an arbitrary morphism
  \[
    \begin{tikzcd}
      (X,A) \rar& (Y,B)\rar&0
    \end{tikzcd}
    \TAB\text{and}\TAB
    \begin{tikzcd}
      (Y,B)\rar&(Z,C).
    \end{tikzcd}
  \]
  Then the covariance for the composition arises as pullback
  \[
    \cov(X\to Y\to Z)=(A\to B)\inv\cov(Y\to Z)\cap XX^*.
  \]
  In particular the covariance for a representation may be recovered from the induced representation on the quotient correspondence,
  \begin{gather*}
    \begin{tikzcd}
      (X,A)\rar& \left( \frac{X}{XK},\frac{A}{K} \right)\rar[dashed]& B:
    \end{tikzcd}\\[2\jot]
    \cov\bigl(~X\to B~\bigr)
    = \left(A\to\tfrac{A}{K}\right)\inv
    \cov\left(\tfrac{X}{XK}\to B\right)\cap XX^*
  \end{gather*}
  which resambles Katsura's observation \cite[lemma~5.10 statement (v)]{KATSURA-GAUGE-IDEALS}.
\end{PROP}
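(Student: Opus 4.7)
The plan is to reduce the composition formula directly to Proposition~\ref{COVARIANCE:KERNEL-COKERNEL}: every cokernel (i.e.\ surjective) morphism has \emph{full} covariance on $A\cap XX^*$. Write $\pi\colon(X,A)\to(Y,B)$ for the onto morphism and $\rho\colon(Y,B)\to(Z,C)$ for the arbitrary one, and expand an element $a\in A\cap XX^*$ in Dirac notation as $a=\sum\ketbra{x_i}{y_i}$. Full covariance of $\pi$ says precisely that the algebra image and the compact image coincide already in $B$, so that
\[
  \pi(a)\;=\;\sum\ketbra{\pi x_i}{\pi y_i}\;=:\;b\;\in\;B\cap YY^*.
\]

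The key step is then to apply $\rho$ to this identity and observe that testing membership $a\in\cov(\rho\circ\pi)$ amounts to testing whether the algebra image $\rho\bigl(\phi_\pi(a)\bigr)$ coincides with the compact image $\rho\bigl(\tau_\pi(a)\bigr)$ inside $C$. Since both already coincide as the single element $b\in B\cap YY^*$, the question descends to whether $\phi_\rho(b)=\tau_\rho(b)$ in $C$, i.e.\ whether $b\in\cov(\rho)$. Intersecting with the ambient constraint $a\in XX^*$ yields the desired pullback characterisation
\[
  \cov(\rho\circ\pi)\;=\;(A\to B)\inv\cov(\rho)\cap XX^*.
\]

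The only real subtlety will be to ensure that $b$ genuinely lies in $B\cap YY^*$, so that $\cov(\rho)$ can sensibly be evaluated on it; but that is \emph{exactly} what the full covariance of $\pi$ guarantees, because the algebra element $\pi(a)\in B$ literally equals the compact element $\sum\ketbra{\pi x_i}{\pi y_i}\in YY^*$. Everything else is a direct bookkeeping exercise using the decomposition~\eqref{DIFFERENCE-MORPHISM} of the difference morphism.

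For the displayed corollary on representations, I would instantiate the pullback identity with $\pi$ the canonical cokernel $(X,A)\to(X/XK,A/K)$ (which is indeed a cokernel by Theorem~\ref{THEOREM:KERNEL-COKERNEL-MORPHISMS}) and $\rho$ the induced faithful representation on the quotient. The claimed formula reads off immediately, and recovering Katsura's version \cite[lemma~5.10~(v)]{KATSURA-GAUGE-IDEALS} then amounts to a purely notational translation.
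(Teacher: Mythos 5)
Your proposal is correct and follows essentially the same route as the paper: both arguments rest entirely on Proposition~\ref{COVARIANCE:KERNEL-COKERNEL}, namely that the surjective (cokernel) morphism has full covariance, so that for any $a\in A\cap XX^*$ the algebra image and the compact image already agree as a single element $b\in B\cap YY^*$, reducing membership of $a$ in $\cov(X\to Z)$ to membership of $b$ in $\cov(Y\to Z)$. The only difference is presentational: the paper chases the two commuting covariance squares to obtain the two inclusions separately, whereas your element-wise phrasing packages both directions into a single equivalence (and note the expansion $a=\sum\ketbra{x_i}{y_i}$ is not actually needed --- full covariance hands you $\phi_\pi(a)=\tau_\pi(a)$ directly).
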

\begin{proof}The result now easily follows from our previous proposition:
  Indeed as cokernel morphisms have full covariance we obtain a covariance diagram for our cokernel and one for the arbitrary morphism
  \[
    \left(\begin{tikzcd}[row sep=0.7cm]
      A\cap XX^* \rar\dar& B\cap YY^*\dar\\
      XX^*\rar& YY^*
    \end{tikzcd}\right)
    \TAB\text{and}\TAB
    \left(\begin{tikzcd}[row sep=0.7cm]
      \cov(Y\to Z) \rar\dar& C\cap ZZ^*\dar\\
      YY^*\rar& ZZ^*
    \end{tikzcd}\right)
  \]
  and so also a commuting diagram for their composition
  \begin{TIKZCD}[row sep=0.7cm]
      (A\to B)\inv\bigl(~\ldots~\bigr)\cap XX^* \rar\dar& \cov(Y\to Z) \rar\dar& C\cap YY^*\dar\\
      XX^*\rar& YY^*\rar& ZZ^*.
  \end{TIKZCD}
  That is the pullback lies within the covariance
  \[
    (A\to B)\inv\cov(Y\to Z)\cap XX^*\subset \cov(X\to Z).
  \]
  For the converse inclusion it suffices to establish
  \begin{gather*}
    \cov(X\to Z)\subset(A\to B)\inv\cov(Y\to Z) \\[2\jot]
    \hspace{-1.5cm}\iff\TAB (A\to B)\cov(X\to Z)\subset\cov(Y\to Z)
  \end{gather*}
  which is to verify the covariance diagram
  \begin{TIKZCD}[row sep=0.7cm]
    \cov(X\to Z)\rar & (A\to B)\cov(X\to Z)\rar\dar& C\cap ZZ^*\dar\\
    & YY^* \rar& ZZ^*~?
  \end{TIKZCD}
  Indeed this may be easily seen by following the covariance diagram
  \begin{TIKZCD}[row sep=0.7cm]
    \cov(X\to Z)\rar\dar &[1cm] \ldots\rar &[1cm] C\cap ZZ^*\dar\\
    XX^*\rar & YY^* \rar& ZZ^*
  \end{TIKZCD}
  and the full covariance for our cokernel morphisms (once more)
  \begin{TIKZCD}[column sep=2cm,row sep=0.7cm]
    A\cap XX^*\rar\dar& B\cap YY^*\dar\\
    XX^*\rar & YY^* \rar& ZZ^*.
  \end{TIKZCD}
  So the covariance for the composition arises from the pullback as desired.\\
  The remaining statement arises now as special instance from above.
\end{proof}

%%%%%%%%%%%%%%%%%%%%%%%%%%%%%%%%%%%%%%%%%%%%%%
%%  SUBCORRESPONDENCE: NON-FULL COVARIANCE  %%
%%%%%%%%%%%%%%%%%%%%%%%%%%%%%%%%%%%%%%%%%%%%%%
We finish this section with the following negative result about the covariance for subcorrespondences:
While we have found that kernel and cokernel morphisms have full covariance, this is not the case for subcorrespondences in general.
That is simply the converse of implication \eqref{SUBCORRESPONDENCE:OPERATOR-EQUALITY} fails in general:
\[
  a\in A,\TAB k\in XX^*:\TAB\TAB(a-k)X=0\TAB\NOT\implies\TAB(a-k)Y=0.
\]
For this we consider the following somewhat minimal example:
\begin{BREAKEXM}[Subcorrespondence with zero covariance]
    Consider the direct sum of an operator algebra as both the coefficient algebra and the Hilbert module (which we depict as diagonal operators)
    \[
      Y=\mediumMATRIX{D\\&D}=B\TAB\implies\TAB Y^*Y\subset B,\TAB YB\subset Y\TAB\checkmark
    \]
    but with left action given by the flip automorphism on $B= D\oplus D$:
    \[
      \mediumMATRIX{d_1\\&d_2}\act \mediumMATRIX{y_1\\&y_2}=\left[\mediumMATRIX{&1\\1}\mediumMATRIX{d_1\\&d_2}\mediumMATRIX{&1\\1}\right]\mediumMATRIX{y_1\\&y_2}.
    \]
    Regard the subcorrespondence given by the subalgebra \textbf{(noninvariant ideal!)}
    \[
      \left(~X=\mediumMATRIX{D\\&0}=A~\right)\subset\left(~Y=\mediumMATRIX{D\\&D}=B~\right).
    \]
    Then its left action vanishes identically (and in particular by compact operators):
    \[
      A\act X=\left[\mediumMATRIX{&1\\1}\mediumMATRIX{D\\&0}\mediumMATRIX{&1\\1}\right]\mediumMATRIX{D\\&0} = \mediumMATRIX{0\\&D}\mediumMATRIX{D\\&0}=0
    \]
    On the other hand it never vanishes on the ambient correspondence
    \[
      A\act Y=\left[\mediumMATRIX{&1\\1}\mediumMATRIX{D\\&0}\mediumMATRIX{&1\\1}\right]\mediumMATRIX{D\\&D} = \mediumMATRIX{0\\&D}\mediumMATRIX{D\\&D}=\mediumMATRIX{0\\&D}.
    \]
    As such the covariance diagram is \textbf{maximally noncommuting:}
    \[
      \begin{tikzcd}[column sep=2cm]
        A=A\cap XX^*\rar\dar\drar[phantom]{\neq}
        & B\dar \\
        XX^* \rar
        & YY^*
      \end{tikzcd}:
      \TAB\TAB
      \begin{gathered}
        \ker\Bigl(~A\subset B\subset YY^*~\Bigr)=0,\\[2\jot]
        \ker\Bigl(~A\to XX^*\subset YY^*~\Bigr) = A.
      \end{gathered}
    \]
    So one needs to stay cautious about the covariance \textbf{for subcorrespondences:}\\
    In worst case one needs to verify a particular covariance by hand.
\end{BREAKEXM}

This finishes our section on kernel and cokernel morphisms on one hand, and the possible covariances on the other hand.
With this at hand we may now proceed to the gauge-equivariant representations and their classification\linebreak --- in other words the classification of relative Cuntz--Pimsner algebras.

%%%%%%%%%%%%%%%%%%%
%%  Main matter  %%
%%%%%%%%%%%%%%%%%%%
%%%%%%%%%%%%%%%%%%%%%%%%%%%%%%
%%  RELATIVE CUNTZ-PIMSNER  %%
%%%%%%%%%%%%%%%%%%%%%%%%%%%%%%
\section{Relative Cuntz--Pimsner algebras}
\label{sec:RELATIVE-PIMSNER}
We introduce in this section the relative Cuntz--Pimsner algebras
and elaborate why and how these serve to classify the gauge-equivariant representations.

We begin with the Toeplitz algebra: That is the universal representation (more precisely the initial representation)
as such that any other representation uniquely factors via the universal one,
\begin{TIKZCD}[column sep=3cm]
  (X,A)\rar[equal]\dar & (X,A)\dar \\
  \TOEPLITZ (X,A)\rar[dashed] & (B,B).
\end{TIKZCD}
Next recall from the previous section that we may render any representation faithful by passing to the quotient correspondence
\begin{TIKZCD}[column sep=1.5cm]
  0\rar& (XK,K)\rar{\ker}\dar& (X,A)\rar\dar& \left(\frac{X}{XK},\frac{A}{K}\right) \rar\dar[dashed]&0 \\
  0\rar& 0\rar[swap]& (B,B)\rar[equal]& (B,B) \rar&0
\end{TIKZCD}
and recall from theorem \ref{THEOREM:KERNEL-COKERNEL-MORPHISMS} that any such quotient arises precisely for some invariant ideal (equivalently hereditary ideal)
\[
  K=\ker(A\to B)\ILEQ A:\TAB\TAB X^*KX\subset K\TAB (\iff KX\subset XK).
\]
On the other hand, we found in proposition
\ref{FAITHFUL-REPRESENTATIONS:MAXIMAL-COVARIANCE} that faithful representations have their covariance bounded from above by the maximal covariance,
\[
  0\subset\cov\left(\begin{tikzcd}[column sep=0.5cm]
    \left(\frac{X}{XK},\frac{A}{K}\right)\rar&B
  \end{tikzcd}\right) \subset \max\left(\tfrac{X}{XK},\tfrac{A}{K}\right).
\]
As such we may aim to classify representations by pairs of some invariant ideal (as possible covariance) and another bounded ideal (as possible covariance):
\begin{equation}
  \label{DEFINITION:KERNEL-COVARIANCE-PAIRS}
  \Bigl(~~K\ILEQ A:~X^*KX\subset K~~\Big|~~I\ILEQ\tfrac{A}{K}:~I\subset\max\left(\tfrac{X}{XK}\right)~~\Bigr)
\end{equation}
To handle this task, we may consider the class of representations with kernel and covariance at least a given pair of invariant and bounded ideal,
\[
  \begin{tikzcd}[column sep=small]
    (X,A)\rar& (B,B)
  \end{tikzcd}:\TAB\TAB
  K\subset \ker(A\to B),\TAB I\subset\cov\left(\tfrac{X}{XK}\to B\right).
\]
Note that any such class contains at least the trivial representations and furthermore that representations may well have larger kernel and covariance only (which we cannot exclude at this moment as we will explain below).\linebreak
The universal such representation defines now the relative Cuntz--Pimsner algebra for a given kernel--covariance pair as in \eqref{DEFINITION:KERNEL-COVARIANCE-PAIRS}:
\begin{TIKZCD}[column sep=3.5cm,row sep=2cm]
    (X,A)\rar & \left(\frac{X}{XK},\frac{A}{K}\right)
    \rar[equal]\dar\drar[phantom]
    {\begin{gathered}
      \scriptstyle K\subset \ker(A\to B)\\
      \scriptstyle I\subset \cov\left(\tfrac{\scriptscriptstyle X}{\scriptscriptstyle XK}\to B\right)
    \end{gathered}}
    & \left(\frac{X}{XK},\frac{A}{K}\right)\dar \\
    &\PIMSNER(K,I)\rar[dashed] & (B,B).
\end{TIKZCD}
As such we obtain an entire \enquote{2-dimensional lattice} of relative Cuntz--Pimsner algebras (as class of universal representations)
by first following along the lattice of cokernel morphisms (given by invariant ideals)
\[\hspace{0cm}\begin{tikzcd}[x=1cm,y=0.8cm]
  \node (1-1) at (0,0) {(X,A) \vphantom{ \big(\frac{A}{(K)}\big) }};
  \node (1-2) at (3,0) {\ldots\vphantom{ \big(\frac{A}{(K)}\big) }};
  \node (1-3) at (6,0) {\ldots\vphantom{ \big(\frac{A}{(K)}\big) }};
  \node (1-4) at (9,0) {\ldots\vphantom{ \big(\frac{A}{(K)}\big) }};
  \node (2-1) at (1,2) {\ldots\vphantom{ \big(\frac{A}{(K)}\big) }};
  \node (2-2) at (4,2) {\big( \frac{X}{X(K\cap L)},   \frac{A}{K\cap L\mathstrut} \big)};
  \node (2-3) at (7,2) {\big( \frac{X}{XK\mathstrut}, \frac{A}{K\mathstrut}       \big)};
  \node (2-4) at (10,2){\ldots\vphantom{ \big(\frac{A}{K\mathstrut}\big) }};
  \node (3-1) at (2,4) {\ldots\vphantom{ \big(\frac{A}{K\mathstrut}\big) }};
  \node (3-2) at (5,4) {\big( \frac{X}{XL\mathstrut}, \frac{A}{L\mathstrut}   \big)};
  \node (3-3) at (8,4) {\big( \frac{X}{X(K+L)},       \frac{A}{K+L\mathstrut} \big)};
  \node (3-4) at (11,4){\ldots\vphantom{ \big(\frac{A}{(K)}\big) }};
  \node (4-1) at (3,6) {\ldots\vphantom{ \big(\frac{A}{(K)}\big) }};
  \node (4-2) at (6,6) {\ldots\vphantom{ \big(\frac{A}{(K)}\big) }};
  \node (4-3) at (9,6) {\ldots\vphantom{ \big(\frac{A}{(K)}\big) }};
  \node (4-4) at (12,6){(0,0) \vphantom{ \big(\frac{A}{(K)}\big) }};
  \draw[->] (1-1.east) to (1-2.west);
  \draw[->] (1-2.east) to (1-3.west);
  \draw[->] (1-3.east) to (1-4.west);
  \draw[->] (2-1.east) to (2-2.west);
  \draw[->] (2-2.east) to (2-3.west);
  \draw[->] (2-3.east) to (2-4.west);
  \draw[->] (3-1.east) to (3-2.west);
  \draw[->] (3-2.east) to (3-3.west);
  \draw[->] (3-3.east) to (3-4.west);
  \draw[->] (4-1.east) to (4-2.west);
  \draw[->] (4-2.east) to (4-3.west);
  \draw[->] (4-3.east) to (4-4.west);
  \draw[->] ($(1-1.north)+(0.2,0)$) to ($(2-1.south)+(-0.25,0)$);
  \draw[->] ($(1-2.north)+(0.2,0)$) to ($(2-2.south)+(-0.25,0)$);
  \draw[->] ($(1-3.north)+(0.2,0)$) to ($(2-3.south)+(-0.25,0)$);
  \draw[->] ($(1-4.north)+(0.2,0)$) to ($(2-4.south)+(-0.25,0)$);
  \draw[->] ($(2-1.north)+(0.2,0)$) to ($(3-1.south)+(-0.25,0)$);
  \draw[->] ($(2-2.north)+(0.2,0)$) to ($(3-2.south)+(-0.25,0)$);
  \draw[->] ($(2-3.north)+(0.2,0)$) to ($(3-3.south)+(-0.25,0)$);
  \draw[->] ($(2-4.north)+(0.2,0)$) to ($(3-4.south)+(-0.25,0)$);
  \draw[->] ($(3-1.north)+(0.2,0)$) to ($(4-1.south)+(-0.25,0)$);
  \draw[->] ($(3-2.north)+(0.2,0)$) to ($(4-2.south)+(-0.25,0)$);
  \draw[->] ($(3-3.north)+(0.2,0)$) to ($(4-3.south)+(-0.25,0)$);
  \draw[->] ($(3-4.north)+(0.2,0)$) to ($(4-4.south)+(-0.25,0)$);
\end{tikzcd}\]
followed by the lattice of covariance ideals (on the chosen cokernel)
\begin{TIKZCD}[column sep=0.5cm,row sep=1cm]
  \ldots\rar&[0.5cm] \left(\frac{X}{XK},\frac{A}{K}\right) \rar\dar& \ldots \\
  &\hspace{-2.5cm}\TOEPLITZ\left(\frac{X}{XK},\frac{A}{K}\right)=\PIMSNER(K,0) \rar\dar&[-1cm]
  \mathstrut\ldots \rar\dar& \mathstrut\ldots    \rar\dar& \mathstrut\ldots \dar \\
  &\mathstrut\ldots \rar\dar& \PIMSNER(K,I\cap J) \rar\dar& \PIMSNER(K,I)   \rar\dar& \mathstrut\ldots \dar\\
  &\mathstrut\ldots \rar\dar& \PIMSNER(K,J)       \rar\dar& \PIMSNER(K,I+J) \rar\dar& \mathstrut\ldots \dar\\
  &\mathstrut\ldots \rar    & \mathstrut\ldots \rar& \mathstrut\ldots \rar&
  \PIMSNER(K,\max)=\PIMSNER\left(\frac{X}{XK},\frac{A}{K}\right)\hspace{-2cm}
\end{TIKZCD}
with Toeplitz algebras and absolute Cuntz--Pimsner algebras as extreme points.

%%%%%%%%%%%%%%%%%%
%%  FIRST GOAL  %%
%%%%%%%%%%%%%%%%%%
We now explain the precise goals for the classification of relative Cuntz--Pimsner algebras.
We will first note that that relative Cuntz--Pimsner algebra come equipped with gauge-actions rendering the representation gauge-equivariant
(we explain this in more detail in the following section).\linebreak
As such the possible range of relative Cuntz--Pimsner algebras are the gauge-equivariant representations at most.
The surprising first goal of their classification now states that every gauge-equivariant representation indeed arises itself as a relative Cuntz-Pimsner algebra (and in particular every gauge-equivariant representation is itself a universal representation).
More precisely, consider first the induced representation rendering the representation faithful
\[
  \begin{tikzcd}[column sep=1cm]
    (X,A)\rar & \left(\frac{X}{XK},\frac{A}{K}\right) \rar[dashed]& (B,B)
  \end{tikzcd}:\TAB\TAB
  K=\ker(A\to B)
\]
followed by the covariance for the resulting representation
\[
  \begin{tikzcd}[column sep=1cm]
    \left(\frac{X}{XK},\frac{A}{K}\right) \rar& (B,B)
  \end{tikzcd}:\TAB\TAB
  I:=\cov\left(\begin{tikzcd}
    \left(\tfrac{X}{XK},\tfrac{A}{K}\right)\subset B
  \end{tikzcd}\right).
\]
To better illuminate the problem let us restrict the representation to its range, that is the operator algebra generated by (the image of) the correspondence,
\begin{TIKZCD}[column sep=1cm]
    (X,A)\rar & \left(\frac{X}{XK},\frac{A}{K}\right) \rar& \CSTAR(X\cup A)\subset B.
\end{TIKZCD}
With this description the first problem states that
\begin{TIKZCD}[column sep=1cm]
  (X,A)\rar & \left(\frac{X}{XK},\frac{A}{K}\right) \rar& \PIMSNER(K,I)=\CSTAR(X\cup A).
\end{TIKZCD}
Or put in other words, the pairs of invariant ideals as kernel and bounded ideals covariance exhaust the gauge-equivariant representations.
We will solve this problem via the familiar gauge-invariant uniqueness theorem.

%%%%%%%%%%%%%%%%%%%
%%  SECOND GOAL  %%
%%%%%%%%%%%%%%%%%%%
The second goal is to determine that in fact every possible kernel and covariance arises itself as an actual kernel and covariance.
More precisely, consider any pair of invariant ideal as kernel and bounded ideal as covariance:
\[
  \left(\TAB
  \begin{gathered}
    K\ILEQ A:\\
    I\ILEQ A/K:
  \end{gathered}\TAB\TAB
  \begin{gathered}
    X^*KX\subset K\\
    I\subset\max(X/XK)
  \end{gathered}\TAB\right)
\]
Then there simply may be no such representation with precisely the given kernel and covariance ideal (so that not every such pair would arise in nature):
\begin{gather*}
  \begin{tikzcd}[column sep=1cm]
    (X,A)\rar & \left(\frac{X}{XK},\frac{A}{K}\right) \rar& (B,B)
  \end{tikzcd}:\\[2\jot]
  \Big(~\ker(A\to B),~\cov\left(\tfrac{X}{XK}\to B\right)~\Big) = (K,I)~?
\end{gather*}
Or put in other words, two possibly different pairs of kernel and covariance ideal could in principle lead to one and the same relative Cuntz--Pimsner algebra:
\[
  \PIMSNER(K,I)=\PIMSNER(K',I')\TAB\implies\TAB(K,I)=(K',I')~?
\]
We will however find the following relations (both nontrivial!):
\begin{gather*}
  \begin{tikzcd}[column sep=1cm]
    (X,A)\rar & \left(\frac{X}{XK},\frac{A}{K}\right) \rar& \PIMSNER(K,I)
  \end{tikzcd}:\\[2\jot]
  \Big(~\ker\left(\tfrac{A}{K} \to\PIMSNER(K,I)\right)=0~
  \Big|~\cov\left(\tfrac{X}{XK}\to\PIMSNER(K,I)\right)=I\Big)
\end{gather*}
and as such also the desired kernel--covariance pair
\begin{equation}\label{RELATIVE-PIMSNER:KERNEL-COVARIANCE}
  \Big(~\ker\left(A\to\PIMSNER(K,I)\right)=K~
  \Big|~\cov\left( \tfrac{X}{XK}\to\PIMSNER(K,I)\right)=I\Big).
\end{equation}
For this the Fock representation will come into play:
Its concrete representation allows us to actually compute its kernel and covariance and so to verify the desired relations.
As such each possible pair of invariant ideal (as kernel) and bounded ideal (as covariance) arises itself as actual kernel and covariance.
Summarizing these goals, the kernel--covariance pairs completely parametrize the entire lattice of gauge-equivariant representations.
In other words, the lattice of relative Cuntz--Pimsner algebras (as schematically given above) classifies the entire lattice of gauge-equivariant representations.

%%%%%%%%%%%%%%%%%%%%%%%%%%%%
%%  CONNECTING MORPHISMS  %%
%%%%%%%%%%%%%%%%%%%%%%%%%%%%
With this at hand, we then further investigate the connecting morphisms between relative Cuntz--Pimsner algebras.
More precisely, we will find that our parametrisation given by kernel--covariance pairs defines a lattice isomorphism in the sense that
\[
  (~K\subset L~|~\mathquote{I\subset J}~) \TAB\iff\TAB \PIMSNER(K,I)\leq\PIMSNER(L,J)
\]
where the latter denotes the factorization (as common notation):
\begin{TIKZCD}[column sep=3cm, row sep=1cm]
  (X,A)\rar[equal]\dar & (X,A)\dar\\
  \PIMSNER(K,I) \rar[dashed] &\PIMSNER(L,J).
\end{TIKZCD}
This has been observed already by Katsura in his 2007 article on gauge-invariant ideals using however instead so-called T-pairs (and further O-pairs):\linebreak
We will unravel these as nothing but transformed versions of our kernel--covariance pairs and as such give a natural interpretation for such pairs.\linebreak
Following we further give precise descriptions for when connecting morphisms exist between cokernel strands (as from where and to where)
\begin{gather*}
  \begin{tikzcd}[column sep=1cm]
    \PIMSNER(K,I=~??)\rar[dashed]& \PIMSNER(L,J)
  \end{tikzcd},\TAB
  \begin{tikzcd}[column sep=1cm]
    \PIMSNER(K,I)\rar[dashed]& \PIMSNER(L,\max)~??
  \end{tikzcd}
\end{gather*}
which generalize results from Katsura.
Together we thus obtain a plethora of connecting morphisms between cokernel strands such as
\begin{TIKZCD}[column sep=2cm,row sep=1cm]
  \rar
  & \left(\frac{X}{XK},\frac{A}{K}\right) \rar\dar
  & \left(\frac{X}{XL},\frac{A}{L}\right) \rar\dar
  & \mathstrut\\
  \rar
  & \TOEPLITZ\left(\frac{X}{XK},\frac{A}{K}\right) \dar\rar
  & \TOEPLITZ\left(\frac{X}{XL},\frac{A}{L}\right) \dar\rar & \mathstrut\\
  \rar[dashed]\drar[dashed]
  & \PIMSNER(K,I)\dar\rar[dashed]
  & \PIMSNER(L,J)\dar\rar[dashed] \drar[dashed] &\mathstrut\\
  & \PIMSNER\left(\frac{X}{XK},\frac{A}{K}\right) \urar[dashed]
  & \PIMSNER\left(\frac{X}{XL},\frac{A}{L}\right) &\mathstrut
\end{TIKZCD}
and note that plenty of connecting morphisms also will be missing.\\
For this we will give some further examples from graph algebras to illuminate the lack of connecting morphism.
With this in mind and without further ado, we now get to the class of gauge-equivariant representations:

%%%%%%%%%%%%%%%%%%%%%%%%%%%%%
%%  GAUGE REPRESENTATIONS  %%
%%%%%%%%%%%%%%%%%%%%%%%%%%%%%
\section{Gauge actions: Fourier spaces}
\label{sec:GAUGE-ACTIONS}

%%%%%%%%%%%%%%%%%%
%%  MOTIVATION  %%
%%%%%%%%%%%%%%%%%%
We begin with the following observation to motivate gauge-equivariant representations:
For every fixed complex number on the torus we may consider the automorphism which rotates the correspondence
\[
  z\in \TORUS:\TAB\TAB
  \begin{tikzcd}[column sep=0.5cm]
    (X,A)\rar&(X,A)
  \end{tikzcd}:\TAB z\act x=zx,\TAB z\act a=a
\]
which together define a circle action $\TORUS\act (X,A)$.\\
Clearly these do not affect the kernel of representations as for
\[
  \begin{tikzcd}[column sep=1cm]
    (X,A)\rar& B
  \end{tikzcd}:\TAB\TAB
  \ker(\begin{tikzcd}[column sep=0.5cm]
    A\rar{1}& A\rar& B
  \end{tikzcd})
  =\ker(\begin{tikzcd}[column sep=0.5cm]
    A\rar& B
  \end{tikzcd}).
\]
Similarly they do not affect the covariance as they act trivially on compacts,
\begin{gather*}
  zz^*=1:\begin{tikzcd}[column sep=1cm]
    XX^*\rar& XX^*
  \end{tikzcd}:\TAB x(zz^*)y^*= xy^*:\\[2\jot]
  \begin{tikzcd}
    A\cap XX^*\rar[equal]\dar\drar[phantom]{=}& A\cap XX^*\subset A\rar\dar\drar[phantom]{=?}& B\dar[equal]\\
    XX^*\rar[equal]& XX^*\rar& B.
  \end{tikzcd}
\end{gather*}
As such every relative Cuntz--Pimsner algebra (as universal representation for some relations) admits a unique gauge action which renders its representation gauge-equivariant (think in terms of generators and relations):
\[
  \exists\TORUS\act\PIMSNER(K,J):\TAB\TAB
  \begin{tikzcd}[column sep=2cm]
    \Big(\TORUS\act(X,A)\Big)\rar&\Big(\TORUS\act\PIMSNER(K,I))\Big).
  \end{tikzcd}
\]
As such at best we may hope to classify the relative Cuntz--Pimsner algebras amongst those representations which come along with a gauge-action rendering the representation equivariant. More precisely, that is written out
\begin{gather*}
  \begin{tikzcd}[column sep=2cm]
    (\phi,\tau):\Big(\TORUS\act(X,A)\Big)\rar&\Big(\TORUS\act B\Big):
  \end{tikzcd}\\[2\jot]
  \begin{aligned}
    z\act\tau(x)&=\tau(z\act x)=\tau(zx),\\
    z\act\phi(a)&=\phi(z\act a)=\phi(a).
  \end{aligned}
\end{gather*}
Equivalently these are the conventional gauge-equivariant representations of operator algebras (from any of the preceeding relative Cuntz--Pimsner algebras)
\begin{gather*}
  \pi:\begin{tikzcd}[column sep=1cm]
    \Big(\TORUS\act\PIMSNER(K,I)\Big)\rar& \Big(\TORUS\act B\Big)
  \end{tikzcd}:\TAB z\act\pi(\BLANK)=\pi(z\act\BLANK).
\end{gather*}
Consider now the relative Cuntz--Pimsner algebras (as universal representations) which allow factorizations for our gauge-equivariant representation:
\begin{gather*}
  \begin{tikzcd}[column sep=1cm]
    (X,A)\rar&\PIMSNER(K,I)\rar[dashed]& B
  \end{tikzcd}\\[2\jot]
  \hspace{-1cm}\iff\TAB\Big(~K\subset\ker(A\to B)~\Big|~I\subset\cov\left( \tfrac{X}{XK}\to B\right)~\Big)
\end{gather*}
As such --- if the gauge-equivariant representation has a chance of being a universal representation for some kernel--covariance pair --- then certainly the best chance is given by the kernel--covariance pair for the representation itself:
\begin{gather*}
  (K,I)=\Big(~\ker(A\to B)~\Big|~\cov\left( \tfrac{X}{XK}\to B\right)~\Big):\\[3\jot]
  \begin{tikzcd}[column sep=1cm]
    (X,A)\rar&\PIMSNER(K,I)\rar[equal]&\CSTAR(X\cup A)\subset B
  \end{tikzcd}?
\end{gather*}
This is what the gauge-equivariant uniqueness theorem will establish in the next section and so also the first goal in the classification of relative Cuntz--Pimsner algebras. So let us prepare ourselves a bit more for this by taking a closer look at gauge-equivariant representations and their Fourier spaces.

%%%%%%%%%%%%%%%%%%%%%%
%%  FOURIER SPACES  %%
%%%%%%%%%%%%%%%%%%%%%%
It is well known that every operator algebra equipped with a circle action
(such as our gauge-equivariant representations) comes along with Fourier spaces
\[
  B(n\in\INTEGERS)=\Big\{~b~\Big|~\Big(b(z):=z\act b\Big)=z^nb~\Big\}\subset B
\]
and in particular its fixed point algebra
\[
  B(n=0)=\Big\{~b~\Big|~\Big(b(z):=z\act b\Big)\equiv b~\Big\}\subset B
\]
which define a Fell bundle over the integers
\[
  B(m)B(n)\subset B(m+n),\TAB B(n)^*=B(-n)
\]
together with a conditional expectation onto its fixed point algebra
\[
  E:\begin{tikzcd}[column sep=0.5cm]
    B\rar& B(0)
  \end{tikzcd}:\TAB E(b)=\int_\TORUS \Big(b(z)=z\act b\Big)\dd z
\]
and more generally with projections onto any of its Fourier spaces
\[
  E_n:\begin{tikzcd}[column sep=0.5cm]
    B\rar& B(n)
  \end{tikzcd}:\TAB E_n(b)=\int_\TORUS z^{-n}b(z)\dd z.
\]
Recall that conditional expectations given by averaging are automatically faithful since the state space separates the positive elements
(and since averaging runs as Bochner integral):
\begin{gather*}
  b\geq0\implies \Big(b(z)=z\act b\Big)\geq0\implies SB\Big(b(z)=z\act b\Big)\geq0:\\[3\jot]
  E(b\geq0) = 0 \implies SB(E(b))=SB\left(\int b(z)\dd z\right)=\int SB(b(z))\dd z=0\\
  \implies SB\Big(b(z)=z\act b\Big)\equiv0\implies SB\Big(b=1\act b\Big)=0 \implies b=0.
\end{gather*}
We meanwhile note that the Fourier spaces densely span the operator algebra by \cite[proposition 2.5]{EXEL-CIRCLE-ACTIONS} (which seems a rather intuitive yet nontrivial relation)
\[
  B= \overline{\Big(\ldots + B(-1)+B(0)+B(1)+\ldots\Big)}=\sum_n B(n)
\]
and we encourage the reader to have a look into the beautiful proof by Exel:
It does not require any Cesaro approximations and instead only invokes the elementary isomorphism (as the basic version of Coburn)
\begin{gather*}
  \CSTAR(\INTEGERS) = \CSTAR(u^*u=1=uu^*) = \CONTINUOUS(~\sigma u\mid u^*u=1=uu^*)=\CONTINUOUS(\TORUS)\\[2\jot]
  \implies\CSTAR(\INTEGERS)=\overline{\Big(\ldots+ \COMPLEX u^* +\COMPLEX e +\COMPLEX u +\ldots\Big)}=\CONTINUOUS(\TORUS).
\end{gather*}
%%%%%%%%%%%%%%%%%%%%%%%%%%%%%%%%%%%%%%
%%  DIGRESSION: ANALYTIC FUNCTIONS  %%
%%%%%%%%%%%%%%%%%%%%%%%%%%%%%%%%%%%%%%
\textbf{A short digression:} One may meanwhile wonder how it may be possible that any element may be approximated by Fourier sums while the series of its Fourier coefficients only converges in Cesaro mean (and generally diverges in norm).
The answer to this question becomes evident when replacing for instance the torus with an open disk and the trigonometric with polynomial sums
\[
  \{\ldots, 1/z,1,z,z^2,\ldots\}\subset \CONTINUOUS(\TORUS)\TAB\rightsquigarrow\TAB \{1,z,z^2,\ldots\}\subset \CONTINUOUS(\DISK)
\]
It is well-known here that any continuous function may be approximated in norm by polymoials (by Stone--Weierstrass).
On the other hand, the convergent power series correspond to the class of Taylor analytic functions
\[
  f(z)=\sum_n a_n z^n\iff f\in\CONTINUOUS^\infty(\DISK)
\]
The difference lies in the fact that a polynomial approximation of continuous functions generally changes each of the coefficient in the sequence --- also the previously already set coefficients!
As such one may think of the convergent Fourier series as a \textbf{generalization of analytic functions.}

%%%%%%%%%%%%%%%%%%%%%%
%%  FINE STRUCTURE  %%
%%%%%%%%%%%%%%%%%%%%%%
We now further restrict our attention on the range of the representation (also since the ambient rest does not reflect the correspondence): More precisely, that is the operator algebra generated by (the image of) the correspondence,
\[
  \begin{tikzcd}[column sep=1cm]
    (X,A)\rar& \CSTAR(X\cup A)\subset B
  \end{tikzcd}
  \TAB\rightsquigarrow\TAB
  B=\CSTAR(X\cup A).
\]
Recall that a correspondence (and so its image under the representation) satisfies its defining inclusions as explained in more detail in section \ref{sec:CORRESPONDENCES}:
\[
  X^*X\subset A,\TAB XA\subset X,\TAB AX\subset X.
\]
As such the the range (as generated operator algebra) admits \enquote{a fine structure}
\begin{gather*}
  \CSTAR(X\cup A)=\CLOSESPAN\Big(\ldots + (X^*+ XX^*X^* +\ldots)+\\[2\jot]
                 + (A+ XX^* + XXX^*X^*+\ldots) + (X + XXX^*+ \ldots)+\ldots\Big)
\end{gather*}
which we organised in groups according to the Fourier spaces they belong to.\\
As such we also obtain \enquote{a fine structure} for the fixed point algebra
\begin{gather*}
  B(n=0)= E\Big(B=\CSTAR(X\cup A)\Big)
  = \CLOSESPAN\Big(A + XX^* + XXX^*X^* + \ldots\Big)
\end{gather*}
which allows us to deduce the gauge-invariant uniqueness theorem by induction.\\
Note further that for any correspondence (including possibly degenerate ones!)
\begin{gather*}
  X^*X\subset A,\TAB XA\subset X,\TAB AX\subset X\\[2\jot]
  \implies XA=X,\TAB XXA=XX,\TAB\ldots
\end{gather*}
As such the Fourier spaces further satisfy
\begin{gather*}
  B(n\geq 1)= E_n\Big(B=\CSTAR(X\cup A)\Big)
  = \CLOSESPAN\Big(X^n + X^nXX^* +  \ldots\Big)\\
  = X^n\Big(A + XX^* + XXX^*X^* + \ldots\Big) = X^n B(0)
\end{gather*}
where we left the linear spans and closures implicit for more pointy statements,\\
and by involution also for negative Fourier spaces and as such altogether
\[
  B(n\geq1)=X^n B(0),\TAB\TAB B(n\leq-1) = B(0)X^{-n}.
\]
As a consequence, the Fell bundle (restricted on the range) further satisfies
\begin{gather*}
  B(n\geq1)=B(1)^n:\TAB\TAB B(n)=X^nB(0)\subset B(1)^nB(0)\subset B(1)^n\subset B(n)
\end{gather*}
and similarly for its negative Fourier spaces,
and as such defines altogether a semi-saturated Fell bundle (see for instance \cite[proposition 4.8]{EXEL-CIRCLE-ACTIONS}):
\begin{align*}
  B(m\geq0)B(n\geq0)&=B(m+n),\\
  B(m\leq0)B(n\leq0)&=B(m+n).
\end{align*}
These relations are quite useful for bipartite inflations as introduced in \cite{MUHLY-PASK-TOMFORDE}.\\
With the knowledge about Fourier spaces at hand, we may now get back to the classification problem from above:
\begin{gather*}
  (K,I)=\Big(~\ker(A\to B)~\Big|~\cov\left( \tfrac{X}{XK}\to B\right)~\Big):\\[3\jot]
  \begin{tikzcd}[column sep=1cm]
    (X,A)\rar&\PIMSNER(K,I)\rar[equal]&\CSTAR(X\cup A)\subset B
  \end{tikzcd}?
\end{gather*}
At first we may pass to the quotient correspondence
\begin{TIKZCD}[column sep=1cm]
  (X,A)\rar&\left(\frac{X}{XK},\frac{A}{K}\right)\rar[dashed]& \PIMSNER(K,I)\rar& B
\end{TIKZCD}
to obtain an honest embedding since
\[
  K=\ker(A\to B)\TAB\implies\TAB A/K\subset B\TAB\implies\TAB X/XK\subset B.
\]
As such we may assume that our correspondence lies faithfully in the ambient operator algebra (upon replacing the original with the quotient correspondence)
\begin{equation}\label{REDUCTION:FAITHFUL-REPRESENTATION}
  \left(\tfrac{X}{XK},\tfrac{A}{K}\right)\rightsquigarrow(X,A)
  ~\implies~
  \PIMSNER(K,I)\rightsquigarrow\PIMSNER(0,I)
  ~\implies~
  X\subset B,~ A\subset B
\end{equation}
and at the same time restrict our attention to its range:
\[
  \CSTAR(X\cup A)\subset B\TAB \rightsquigarrow\TAB\CSTAR(X\cup A)=B.
\]
We may thus think of both ambient algebras as faithful completions for the correspondence --- but under a priori possibly different norm topologies:
\begin{TIKZCD}[column sep=1cm]
  \PIMSNER(X;I)=\CSTAR(X\cup A)\rar[hookleftarrow] &X\cup A\rar[hook] &\CSTAR(X\cup A)=B
\end{TIKZCD}
As another valuable perspective, one may also think of the above problem as gauge-equivariant envelopes (comparable to the maximal and minimal \mbox{\CSTAR-envelope} of non-selfadjoint operator algebras):
\[
  \begin{tikzcd}[column sep=0.8cm]
    \TOEPLITZ(X)\rar&\ldots\rar&\PIMSNER(X;I)\rar& Q\rar&\ldots
  \end{tikzcd}:\TAB\TAB (X,A)\subset (\TORUS\act Q).
\]
The author would like to thank Elias Katsoulis for bringing closer this idea.\\
Next as our representation is gauge-equivariant it restricts in particular to fixed point algebras
and further commutes with conditional expectations:
\[
  \begin{tikzcd}
    \PIMSNER(X;I)\rar & B\\
    \PIMSNER(X;I)(0)\rar\uar & B(0)\uar
  \end{tikzcd}
  \TAB\TAB
  \begin{tikzcd}
    \PIMSNER(X;I)\rar\dar & B\dar\\
    \PIMSNER(X;I)(0)\rar& B(0)
  \end{tikzcd}
\]
As such it suffices to verify the above equality on fixed point algebras:
\begin{equation}\label{REDUCTION:FIXED-POINT-ALGEBRAS}
  \pi:\begin{tikzcd}[column sep=1cm]
    \PIMSNER(X;I)(0)\rar[equal]&B(0)
  \end{tikzcd}
  \TAB\implies\TAB
  \pi:\begin{tikzcd}[column sep=1cm]
    \PIMSNER(X;I)\rar[equal]&B
  \end{tikzcd}
\end{equation}
Indeed this follows as standard argument from the faithful conditional expectation (sufficiently for the relative Cuntz--Pimsner algebra):
\begin{gather*}
  \pi(a)=0\implies\pi(a^*a)=0\implies \pi(E(a^*a))=E(\pi(a^*a))=0\\
  \implies E(a^*a)=0\implies a^*a=0\implies a=0.
\end{gather*}
Another interesting less commonly known \textbf{argument due to Exel} basically exploits that continuous functions, whose Fourier coefficients all vanish, already vanish identically themselves (see \cite[proposition 2.5 and 2.9]{EXEL-CIRCLE-ACTIONS}):
\begin{gather*}
  E_n(a)E_n(a)^*\in \PIMSNER(X;I)\Big(n-n=0\Big):\\[2\jot]
  \pi(a)=0\implies E_n(\pi a)E_n(\pi a)^*=\pi\Big(E_n(a)E_n(a)^*\Big)\equiv0\\
  \implies E_n(a)E_n(a)^*\equiv 0 \implies E_n(a)\equiv0\implies  a=0.
\end{gather*}
For the fixed point algebra we however found the \enquote{fine structure}
\[
  \PIMSNER(X;I)(n=0) = \CLOSESPAN\Big(A + XX^* + XXX^*X^* + \ldots\Big)
\]
so the fixed point algebras arises as an increasing union (inductive limit)
\[
  \PIMSNER(X;I)(n=0)=\CLOSURE\left(\bigcup_N\CLOSESPAN\Big(A+ XX^*+\ldots + X^NX^{-N}\Big)\right).
\]
As such we may verify the faithfulness by induction along $n\in\NATURALS$:
\begin{equation}\label{INDUCTION-PROBLEM}
  \begin{gathered}
    % A[0,N]:=\CLOSESPAN(A+ XX^*+\ldots + X^NX^{-N}):\\[2\jot]
    \begin{tikzcd}[column sep=1.5cm]
      \PIMSNER(X;I)\supset \CLOSESPAN(A+ XX^*+\ldots + X^nX^{-n})\rar[hook] & B
    \end{tikzcd}?
  \end{gathered}
\end{equation}
The point here is not that each summand embeds separately (which is trivial)
\[
  \begin{tikzcd}[column sep=0.5cm]
    A\rar[hook]& B,
  \end{tikzcd}\TAB
  \begin{tikzcd}[column sep=0.5cm]
    XX^*\rar[hook]& B,
  \end{tikzcd}\TAB
  \begin{tikzcd}[column sep=0.5cm]
    XXX^*X^*\rar[hook]& B,
  \end{tikzcd}\TAB\ldots
\]
but instead that the summands will have plenty of correlation among each other (namely precisely the amount of covariance) which causes lots of sums to collapse within the representation such as the covariance itself,
\begin{gather*}
  \cov(X\to B) =\ker\left[\left(\begin{tikzcd}[column sep=0.5cm,row sep=0.5cm]
    \cov(X\to B)\rar& B\dar[equal]\\& B
  \end{tikzcd}\right) -
  \left(\begin{tikzcd}[column sep=0.5cm,row sep=0.5cm]
    \cov(X\to B)\dar \\ XX^*\rar& B
  \end{tikzcd}\right)\right]\\[3\jot]
  \implies\bigg\{~(a)+(k=-a)\in A+ XX^*~\bigg|~a\in \cov(X\to B)~\bigg\}\subset \ker\left(\begin{tikzcd}[column sep=0.5cm]
    A + XX^*\rar& B
  \end{tikzcd}\right).
\end{gather*}
In fact, proposition \ref{COVARIANCE:SUBCORRESPONDENCES} tells us that the covariance precisely captures this kernel. As an interesting question to the encouraged reader: can you figure out why?

Before we proceed, we would like to note that the idea for the above induction and its proof are due to Evgenios Kakariadis as in \cite{KAKARIADIS-GAUGE-THEOREM}
and note that this simplifies the original proof by Katsura substantially!
We will not go further into how these approaches compare (since this would not benefit our current work) but we would like to note that Katsuras approach analysing cores may be quite valuable after all in the more general context of product systems and so we refer the curious reader to \cite[section 5]{KATSURA-SEMINAL} for comparison.\linebreak
With this we may now proceed to the gauge-invariant uniqueness theorem.

%%%%%%%%%%%%%%%%%%%%%%%%%%
%%  UNIQUENESS THEOREM  %%
%%%%%%%%%%%%%%%%%%%%%%%%%%
\section{Uniqueness theorem}
\label{sec:GAUGE-THEOREM}

We may now state and proof our version of the gauge-invariant uniqueness theorem for arbitrary gauge-equivariant representations which arise as relative Cuntz--Pimsner algebra along kernel--covariance pairs:

\begin{THM}[Gauge-invariant uniqueness theorem: The general version]
  \label{UNIQUENESS-THEOREM}
  Consider a gauge-equivariant representation (as in the previous section)
  \begin{TIKZCD}[column sep=2cm]
    \Big(\TORUS\act(X,A)\Big)\rar&\Big(\TORUS\act B\Big)
  \end{TIKZCD}
  and choose the kernel--covariance pair for the representation
  \[
    (K,I)=\Big(~\ker(A\to B)~\Big|~\cov\left( \tfrac{X}{XK}\to B\right)~\Big).
  \]
  Then the quotient from the relative Cuntz--Pimsner algebra is faithful:
  \begin{TIKZCD}[column sep=1cm]
    (X,A)\rar&\PIMSNER(K,I)\rar[equal]&\CSTAR(X\cup A)\subset B.
  \end{TIKZCD}
  Thus each gauge-equivariant defines a relative Cuntz--Pimsner algebra.\\
  On the other hand, each relative Cuntz--Pimsner algebra defines itself a gauge-equivariant representations.
  As such the gauge-equivariant representations agree with relative Cuntz--Pimsner algebras (the range of kernel--covariance pairs).
\end{THM}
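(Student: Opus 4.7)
The plan is to carry out the reductions explicitly indicated in the previous section and complete them with an induction on the filtration of the fixed-point algebra, following Kakariadis's streamlining of Katsura's original core analysis. First I would apply the reduction \eqref{REDUCTION:FAITHFUL-REPRESENTATION} to pass to the quotient correspondence $(X/XK, A/K)$, which makes the representation faithful on the coefficient algebra (hence on the entire correspondence by Proposition \ref{HIGHER-MIXED}); restricting to the range of the representation lets me further assume $B = \CSTAR(X\cup A)$. The task then reduces to showing that the canonical morphism $\PIMSNER(0,I) \to B$ is an isomorphism. Second, by the reduction \eqref{REDUCTION:FIXED-POINT-ALGEBRAS}, the faithful conditional expectation onto the fixed-point algebra (gauge-equivariance together with the averaging argument) further reduces the claim to faithfulness on the fixed-point algebra; its fine structure as the closure of the increasing union of $B_n := A + XX^* + \ldots + X^nX^{*n}$ then reduces the problem to an induction on the filtration level $n$, as formulated in \eqref{INDUCTION-PROBLEM}.

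For the base case $n = 0$ on $A + XX^*$, the kernel of the quotient map is described precisely by the difference-morphism \eqref{DIFFERENCE-MORPHISM} together with the definition of the covariance ideal \eqref{COVARIANCE:DEFINITION}: since $I$ is taken as the actual covariance of the representation in $B$ and is exactly what one kills in passing to $\PIMSNER(0,I)$, this level is faithful by construction (compare also Proposition \ref{COVARIANCE:SUBCORRESPONDENCES}). For the inductive step I would take $T \in B_n$ whose image vanishes in $B$ and write $T = T_n + T'$ with $T_n \in X^n X^{*n}$ and $T' \in B_{n-1}$. The device is to peel off the top level using the compact-operator pairings $\bra{x_1\cdots x_n}(\cdot)\ket{y_1\cdots y_n}$, which send all of $B_n$ into the coefficient algebra $A$ via controlled contractions; combined with gauge-equivariance this turns the level-$n$ vanishing statement into a statement at a lower tensor level on suitable auxiliary data, which is then closed by the inductive hypothesis.

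The hard part will be precisely this peeling step: one must verify that the covariance operates coherently across all tensor levels, so that any relation forced at level $n$ inside $B$ is already implemented at some lower level inside $\PIMSNER(0,I)$ via its built-in covariance relations at $I$. This coherence is supplied by the transport of ideals under tensoring captured in Propositions \ref{SHORT-EXACT:ADJOINTABLES} and \ref{SHORT-EXACT:COMPACTS}, together with the observation (already emphasized around \eqref{INDUCTION-PROBLEM}) that the only nontrivial correlations between $A$ and the compact-operator spaces at higher levels are precisely those encoded by $I$. Once this coherence is in place the induction closes cleanly, yielding faithfulness on the fixed-point algebra, whence on all of $\PIMSNER(0,I)$ by the conditional-expectation reduction; undoing the initial quotient then delivers the stated theorem for arbitrary kernel--covariance pairs.
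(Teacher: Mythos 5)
Your overall architecture is exactly the paper's: reduce to the faithful case via \eqref{REDUCTION:FAITHFUL-REPRESENTATION}, reduce to the fixed-point algebra via the faithful conditional expectation \eqref{REDUCTION:FIXED-POINT-ALGEBRAS}, and induct along the filtration $A + (XX^* + \cdots + X^nX^{-n})$ as in \eqref{INDUCTION-PROBLEM}, following Kakariadis. The gap is in the inductive step, which is where all the content lives. First, the decomposition $T = T_n + T'$ with $T_n\in X^nX^{*n}$ and $T'\in B_{n-1}$ is not well defined: the sum $A + XX^* + \cdots + X^nX^{-n}$ is far from direct --- the overlap of $A$ with the compact summands is precisely the covariance, and quantifying that overlap is the whole point of the argument. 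Second, the \enquote{coherence across tensor levels} that you defer to Propositions \ref{SHORT-EXACT:ADJOINTABLES} and \ref{SHORT-EXACT:COMPACTS} is not what those propositions provide; they concern quotient correspondences and kernel morphisms, not the filtration of the fixed-point algebra.

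What actually closes the induction are two specific facts that your sketch never isolates. (i) The identification $A\cap(XX^*+\cdots+X^nX^{-n}) = \cov(X\to B) = \cov\bigl(X\to\PIMSNER(X;I)\bigr)$, obtained by pulling an approximate unit of $XX^*$ out of every higher compact summand (Blanchard) and noting that the covariance cannot decrease along the quotient $\PIMSNER(X;I)\to B$; together with the closedness of algebra-plus-ideal sums \eqref{CLOSED-SUM:SUBALGEBRA-PLUS-IDEAL}, this is what forces the kernel of the level-$(n{+}1)$ map to lie inside the ideal of compacts $XX^* + \cdots + X^{n+1}X^{-n-1}$. (ii) The nondegeneracy of $XX^*$ inside that ideal (Lemma \ref{NONDEGENERATE-SUBALGEBRAS}), which upgrades the conclusion of the compression step --- namely $X^*\ker(\cdots)X\subset\ker(\text{level }n)=0$, hence $XX^*\ker(\cdots)XX^*=0$ --- to $\ker(\cdots)=0$. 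Your compression by $\bra{x_1\cdots x_n}(\cdot)\ket{y_1\cdots y_n}$ all the way down to $A$ is closer to Katsura's original core analysis than to the one-step compression $X^*(\cdot)X$ used here, but either way, without (i) and (ii) the vanishing of the \emph{compressed} kernel does not imply the vanishing of the kernel itself, and the induction does not close.
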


We have basically already proven the theorem:
Indeed we have already reduced the general version to the case of faithful representations\linebreak(see the previous section)
and the faithful case is a classical result due to Katsura from \cite{KATSURA-SEMINAL} resp.~the simplified version due to Kakariadis in \cite{KAKARIADIS-GAUGE-THEOREM}.\linebreak
For convenience of the reader we review the simplified version:

We begin for this with a couple of observations and useful relations.\linebreak
At first consider the increasing subalgebras from the induction problem \eqref{INDUCTION-PROBLEM} (which exhaust the fixed point algebra):
\[
  \CLOSESPAN(A+ XX^*+\ldots + X^nX^{-n})\subset\PIMSNER(X;I)
\]
In order to simplify their induction we need to \textbf{address a technical detail}:
Note that while the sum of ideals is already closed, this fails in general for the sum of subalgebras,
\[
  A\subset B,\TAB A'\subset B:\TAB\TAB (A+ A')\subset\overline{(A+A')}.
\]
However the sum of an ideal and a subalgebra is closed nevertheless:
\[
  A\subset B,\TAB J\ILEQ B:\TAB\TAB (A+J)=\overline{(A+J)}.
\]
\textbf{Quick proof for an algebra and ideal (as for pairs of ideals):}\\
Consider the short exact sequences of possibly incomplete algebras,
\begin{TIKZCD}[column sep=1.5cm,row sep=1cm]
  0\rar& J\rar\dar[equal]&          (A+J)\rar\dar\drar[phantom]{\pi}&  (A+J)/J \rar\dar&0 \\
  0\rar& J\rar           & \overline{(A+J)}\rar    & \overline{(A+J)}/J \rar&0.
\end{TIKZCD}
Note that the morphism between quotients defines an embedding since
\[
  J\cap(A+J)= J\TAB\implies\TAB (A+J)/J\subset\overline{(A+J)}/J.
\]
Since the range of \STARHOMS\ is always closed we obtain also
\[
  \pi(A+J)=\pi(A)=\overline{\pi(A)}=\overline{\pi(A+J)}=\pi\Big(\overline{A+J}\Big).
\]
As such the quotients agree and so (by basic homological algebra)
\begin{equation}
  \label{CLOSED-SUM:SUBALGEBRA-PLUS-IDEAL}
  (A+J)/J =\overline{(A+J)}/J\TAB\implies\TAB(A+J)=\overline{(A+J)}.
\end{equation}
So the sum of an algebra and ideal \textbf{defines a closed subalgebra.}\\
With this at hand we find in our case (using the obvious inclusion)
\[
  AXX^*\subset XX^* \TAB\implies\TAB \CLOSESPAN(A+ XX^*) = A + \CLOSESPAN(XX^*)
\]
and similarly on larger sums
\[
  \CLOSESPAN\Big(A + XX^* + \ldots + X^nX^{-n}\Big) = A + \CLOSESPAN\Big(XX^* + \ldots + X^nX^{-n}\Big).
\]
This finishes our \textbf{discussion on the technical detail.}\\
With this the induction problem asks for the kernel
\[
  \ker\bigg(\begin{tikzcd}[column sep=1cm]
    \PIMSNER(X;I)\supset A+\CLOSESPAN\Big(XX^* + \ldots + X^nX^{-n}\Big)\rar&B
  \end{tikzcd}\bigg)
\]
and so for the coefficient algebra that intersects compact operators
\[
  A\cap\Big(XX^*+\ldots+X^nX^{-n}\Big)\subset B
\]
where we drop from now on the closed linear span for more pointy statements.\\
We would like to better understand this intersection.
For this we first discuss the following result, which gives an interesting algebraic description for non-commutative Cartan subalgebras from \cite{EXEL-NC-CARTAN} as the nondegenerate ones:
\begin{LMM}\label{NONDEGENERATE-SUBALGEBRAS}
  A subalgebra contains some approximate identity for the ambient algebra if and only if the \textbf{subalgebra is nondegenerate:}
  \[
    A\subset B:\TAB\TAB AB=B\TAB\iff\TAB
    (1-e)B\to0~\text{for}~A\ni e\to1.
  \]
  On the other hand, the nondegeneracy holds equivalently if the subalgebra reaches the entire ambient algebra hereditarily (with implicit Cohen--Hewitt)
  \[
    AB=B\TAB\iff\TAB \her(A)=ABA=B.
  \]
  Further, the subalgebra remains nondegenerate on intermediate subalgebras:
  \[
    A\subset B_0\subset B:\TAB\TAB AB=B\TAB\implies\TAB AB_0=B_0.
  \]
  The analogous statements also hold when replacing the subalgebra and the ambient algebra by any pair of operator algebras (after suitable modifications).
\end{LMM}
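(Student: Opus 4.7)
The plan is to reduce all three claims to the Cohen--Hewitt factorization theorem, which says that for any nondegenerate Banach module $M$ over a \CSTARALG\ $A$, the product space $AM$ is automatically closed and every element admits a factorization $m=am'$ with $a\in A$ and $m'\in M$. Here $B$ (and later $B_0$) will be regarded as a left Banach $A$-module via the ambient multiplication.

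First I would dispatch the equivalence $AB=B\iff\Bigl(~(1-e)B\to0~\text{for}~A\ni e\to1~\Bigr)$. The direction $(\Leftarrow)$ is immediate: any $b\in B$ is a norm-limit $b=\lim_e eb\in \overline{AB}=AB$, with the closure redundant by Cohen--Hewitt. For the converse, fix any approximate identity $(e_\lambda)$ of $A$; given $b\in B$, invoke Cohen--Hewitt on $AB=B$ to factor $b=ab'$ with $a\in A$ and $b'\in B$. Since approximate identities act asymptotically as the identity on elements of $A$ we obtain
\[
  e_\lambda b = (e_\lambda a)b' \to ab' = b,
\]
so that any approximate identity of $A$ works as one for $B$.

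Next I would handle the hereditary reformulation $AB=B\iff ABA=B$. One direction is trivial since $ABA\subset AB$. For the other I would pass to adjoints: using $A=A^*$ and $B=B^*$, the hypothesis gives $BA=(AB)^*=B^*=B$, whence $ABA=A(BA)=AB=B$. Alternatively, the approximate identity from the first part acts as the identity on both sides, which gives the same conclusion by two applications of the previous step.

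Finally, for the descent statement $A\subset B_0\subset B\implies AB_0=B_0$, I would simply reuse the approximate identity $(e_\lambda)\subset A$ constructed above: since $B_0\subset B$, the asymptotic identity $(1-e_\lambda)b_0\to 0$ still holds for every $b_0\in B_0$, so
\[
  b_0=\lim_\lambda e_\lambda b_0 \in \overline{AB_0}=AB_0,
\]
with closedness by Cohen--Hewitt applied to the Banach $A$-module $B_0$. The analogous claims for pairs of operator algebras follow from the same arguments once one chooses a one-sided contractive approximate identity and correspondingly views the ambient object as a one-sided module. The main (and essentially only) obstacle is the converse of the first equivalence: without the factorization theorem there is no way to extract an explicit approximating net from the mere algebraic equality $AB=B$; once Cohen--Hewitt is available, everything else is formal manipulation with approximate identities and adjoints.
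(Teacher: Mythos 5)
Your proposal is correct and follows essentially the same route as the paper: both directions of the first equivalence rest on Cohen--Hewitt factorization (the paper compresses your factor-and-approximate computation into $(1-e)B=(1-e)AB\to0$), the hereditary reformulation is obtained by the same passage to adjoints $BA=(AB)^*=B$, and the descent to intermediate subalgebras reuses the approximate identity exactly as in the paper.
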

\begin{proof}
  The forward direction is obvious since for any approximate unit
  \[
    A\ni e\to1:\TAB\TAB(1-e)B=(1-e)AB\to 0
  \]
  where we implicitly use Cohen--Hewitt as usual.\\
  The converse is also obvious as evidentily
  \[
    (1-e)B\to 0\TAB\implies\TAB B\subset AB.
  \]
  Regarding the hereditary subalgebra we have:
  \begin{gather*}
    AB=B\TAB(\implies B=B^*=(AB)^*=B^*A^*=BA)\\[2\jot]
    \implies\TAB B=(AB)=A(BA)=ABA\TAB\implies\TAB B=ABA\subset AB
  \end{gather*}
  Regarding intermediate algebras we have:
  \begin{TIKZCD}[row sep=small]
    AB=B\TAB \rar[Leftrightarrow]&
    \TAB(1-e)B\to0~\text{for}~A\ni e\to1
    \dar[Rightarrow]\\
    AB_0=B_0\TAB \rar[Leftrightarrow]&
    \TAB(1-e)B_0\to0~\text{for}~A\ni e\to1
  \end{TIKZCD}
  For arbitrary pairs of operator algebras (instead of a subalgebra):\\
  One needs to replace the equality by an inclusion and an additional closure.
\end{proof}
With the nondegeneracy in mind we may now unravel the intersection above:
For this we first note that the compact operators are nondegenerate in the sum of higher order compact operators,
\[
  \Big(XX^* + \ldots + X^nX^{-n}\Big)=XX^*\Big(XX^* + \ldots + X^nX^{-n}\Big).
\]
Indeed we may simply pull out a rabbit from each summand (using Blanchard):
\[
  XX^*=(XX^*)XX^*,\TAB\TAB XXX^*X^*=(XX^*)XXX^*X^*,\TAB\TAB\ldots
\]
As a consequence we obtain the inclusion (similarly as in lemma \ref{NONDEGENERATE-SUBALGEBRAS}):
\[
  A\cap\Big(XX^* + \ldots + X^nX^{-n}\Big)\subset \Big(A\cap(XX^* + \ldots + X^nX^{-n})\Big)XX^*.
\]
The latter however lies inside the algebra of compact operators since
\[
  \Big(A\cap(XX^* + \ldots + X^nX^{-n})\Big)XX^*\subset A XX^*\subset XX^*
\]
and as such inside the covariance for the representation (see proposition \ref{COVARIANCE:SUBCORRESPONDENCES}).\\
As the covariance cannot decrease we obtain the combined relation:
\[
  A\cap\Big(XX^* + \ldots + X^nX^{-n}\Big)=\cov(X\to B) = \cov\Big(X\to\PIMSNER(X;I)\Big).
\]
In particular the kernel arises as sum of compact operators:
\begin{gather*}
  \ker\left(\begin{tikzcd}[column sep=1cm]
  A+ \Big(XX^*+\ldots + XX^{n}X^{-n}X^*\Big)\rar & B
  \end{tikzcd}\right)\\
  \subset XX^* + \Big(XX^*+\ldots + XX^{n}X^{-n}X^*\Big)\subset \PIMSNER(X;I).
\end{gather*}
So far our introductory observations and relations.
With these in mind we may now get to the proof of the gauge-equivariant uniqueness theorem.

\begin{proof}[Proof of theorem \ref{UNIQUENESS-THEOREM} (Gauge-invariant uniqueness theorem):]
  We have already reduced the general version to the case of faithful representations in \eqref{REDUCTION:FAITHFUL-REPRESENTATION}:
  \[
    \left(\tfrac{X}{XK},\tfrac{A}{K}\right)\rightsquigarrow(X,A)\TAB\implies\TAB\PIMSNER(K,I)\rightsquigarrow\PIMSNER(0,I)
    \TAB\implies\TAB X\subset B,\TAB A\subset B
  \]
  Furthermore the problem reduces to fixed point algebras as in \eqref{REDUCTION:FIXED-POINT-ALGEBRAS}
  \[
    \begin{tikzcd}[column sep=1cm]
      \PIMSNER(X;I)(0)\rar[equal]&B(0)
    \end{tikzcd}
    \TAB\implies\TAB
    \begin{tikzcd}[column sep=1cm]
      \PIMSNER(X;I)\rar[equal]&B
    \end{tikzcd}
  \]
  which may be solved as an induction along its fine structure as in \eqref{INDUCTION-PROBLEM}:
  \begin{gather*}
    \begin{tikzcd}[column sep=1.5cm]
      \PIMSNER(X;I)\supset A+ \Big(XX^*+\ldots + X^nX^{-n}\Big)\rar[hook] & B
    \end{tikzcd}?
  \end{gather*}
  With this at hand we may now begin the \textbf{proof by induction from \cite{KAKARIADIS-GAUGE-THEOREM}:}\linebreak
  The base case simply states the inclusion that we are already well aware of,
  \begin{TIKZCD}[column sep=1cm]
    \PIMSNER(X;I)=\CSTAR(X\cup A)\rar[hookleftarrow] &A\rar[hook] &\CSTAR(X\cup A)=B.
  \end{TIKZCD}
  Before we begin with the induction combine the observations from above
  Consider now the induction step (while assuming the induction hypothesis):
  \[
    \ker\left(\begin{tikzcd}
    A + \Bigl(XX^*+\ldots + X^{n+1}X^{-n-1}\Bigr)  \rar&B
    \end{tikzcd}\right)=0~?
  \]
  We may reduce this problem to the induction hypothesis via compression
  \begin{gather*}
    X^*\ker\left(\begin{tikzcd}[column sep=small]
    A + \Bigl(XX^*+\ldots + X^{n+1}X^{-n-1}\Bigr)  \rar&B
    \end{tikzcd}\right)X \\[\jot]
    \subset
    \ker\left(\begin{tikzcd}[column sep=small]
    X^*X+ X^*\Bigl(XX^*+\ldots + X^{n+1}X^{-n-1}\Bigr)X  \rar&B
    \end{tikzcd}\right) \\[\jot]
    \subset
    \ker\left(\begin{tikzcd}[column sep=small]
    A+ \Bigl(XX^*+\ldots + X^{n}X^{-n}\Bigr)  \rar&B
    \end{tikzcd}\right)=0
  \end{gather*}
  from which we infer that the kernel vanishes for the compression
  \[
    XX^*\ker\left(\begin{tikzcd}[column sep=0.5cm]
    A + \Big(XX^*+\ldots + X(X^{n}X^{-n})X^*\Big)\rar & B
    \end{tikzcd}\right)XX^*=0.
  \]
  We however found that the kernel lies inside the sum of compact operators:
  \begin{gather*}
    \ker\left(\begin{tikzcd}[column sep=1cm]
    A+ \Big(XX^*+\ldots + XX^{n}X^{-n}X^*\Big)\rar & B
    \end{tikzcd}\right)\\
    \subset XX^* + \Big(XX^*+\ldots + XX^{n}X^{-n}X^*\Big)\subset \PIMSNER(X;I).
  \end{gather*}
  At the same time, the compact operators define a nondegenerate subalgebra within higher order compact operators and as such have trivial annihilator:
  \begin{gather*}
    \Big(XX^*+\ldots + XX^{n}X^{-n}X^*\Big) = XX^*\Big(XX^*+\ldots + XX^{n}X^{-n}X^*\Big)\\[2\jot]
    \hspace{-1cm}\implies \TAB (XX^*)^\perp\cap\Big(XX^*+\ldots + XX^{n}X^{-n}X^*\Big)=0.
  \end{gather*}
  So the kernel above necessarily also vanishes without compression.
\end{proof}

Concluding the current and previous section, we have achieved the first half in the classification of relative Cuntz--Pimsner algebras: that is we have found that every gauge-equivariant representation arises itself as relative Cuntz--Pimsner algebra for some kernel--covariance pair. In short, that is the kernel--covariance pairs exhaust the gauge-equivariant representations.

We may now proceed with the second half on the classification of relative Cuntz--Pimsner algebras:
The kernel--covariance pairs classify the relative Cuntz--Pimsner algebra and equivalently every possible kernel--covariance pair arises itself in nature.
As such we need to construct sufficiently many representations (as well as any nontrivial one whatsoever). This is where the Fock representation and the quotients thereof will come into play:

%%%%%%%%%%%%%%%%%%%%%%%%%%%
%%  FOCK REPRESENTATION  %%
%%%%%%%%%%%%%%%%%%%%%%%%%%%
\section{Fock representation}
\label{sec:FOCK-SPACE}

%%%%%%%%%%%%%%%%%%%%%%%%%%%%%%%%
%%  FOCK REPR = TOEPLITZ ALG  %%
%%%%%%%%%%%%%%%%%%%%%%%%%%%%%%%%
We begin with the following problem:
Note that up until now we have not come across any representation whatsoever (put aside sufficiently many) except the trivial representation
\[
  B=0:\TAB\TAB
  \begin{tikzcd}
    X\rar&0,
  \end{tikzcd}\TAB
  \begin{tikzcd}
  A\rar&0
\end{tikzcd}
\]
and as such it \textbf{could be in principle} that the relative Cuntz--Pimsner algebras (for some correspondences) all coincide as the only trivial representation
\begin{TIKZCD}
  (X,A)\rar&\TOEPLITZ(X,A)=\ldots =\PIMSNER(K,I)=\PIMSNER(K',I')=\ldots=0~??
\end{TIKZCD}
As such the question arises whether a correspondence admits always a nontrivial representation (and further also sufficiently many faithful ones).\linebreak
For this we consider a failing attempt which leads us in turn to the Fock representation:
As a first guess we may consider the linking algebra associated to our correspondence (seen as Hilbert module only)
\[
  B=\MATRIX{A\\X}\MATRIX{A^*&X^*}=\MATRIX{A&X^*\\X&XX^*}\subset\ADJOINTS\left[\MATRIX{A\\X}\right]
\]
together with the representation given by the canonical embedding
\[
  \MATRIX{A\\&0}\subset \MATRIX{A&X^*\\X&XX^*}=B,\TAB\TAB \MATRIX{&0\\X}\subset \MATRIX{A&X^*\\X&XX^*}=B.
\]
Now while this representation respects the Hilbert module structure (basically by construction) it does not respect the left module structure:
\begin{gather*}
  \mediumMATRIX{&x^*\\0}\mediumMATRIX{&0\\y}=\mediumMATRIX{x^*y\\&0},\\[2\jot]
  \mediumMATRIX{&0\\x}\mediumMATRIX{a\\&0}=\mediumMATRIX{&0\\xa},\\[2\jot]
  \mediumMATRIX{a\\&0}\mediumMATRIX{&0\\x}=\mediumMATRIX{&0\\0}\neq\mediumMATRIX{&0\\ax}.
\end{gather*}
The solution is to simply extend these down the diagonal to infinity
which brings us straight to the Fock space representation:
More precisely, the Fock space is the infinite direct sum of increasing tensor powers (as in section \ref{sec:CORRESPONDENCES})
\[
  \FOCK(X):= A\oplus X\oplus XX\oplus XXX\oplus\ldots = \MATRIX{A\\X\\XX\\\vdots}
\]
and the representation as diagonal action and as right shift respectively:
\begin{align*}
  A\to\ADJOINTS\left(\FOCK X\right):&\TAB\TAB a\Big(A\oplus X\oplus XX\oplus\ldots\Big):=aA\oplus aX\oplus aXX\oplus\ldots\\[2\jot]
  X\to\ADJOINTS\left(\FOCK X\right):&\TAB\TAB x\Big(A\oplus X\oplus XX\oplus\ldots\Big):=0\oplus xA\oplus xX\oplus xXX\oplus\ldots
\end{align*}
We visualize them in matrix notation (using the formal left and right shift):
\[
  X\tensor R=X\mediumMATRIX{0\\1&0\\&1&0\\&&&\ldots},\TAB
  A\tensor 1=A\mediumMATRIX{1\\&1\\&&1\\&&&\ldots},\TAB
  X^*\tensor L=X^*\mediumMATRIX{0&1\\&0&1\\&&0\\&&&\ldots}.
\]
Further the Fock representation comes with the (inner) circle action
\[
  \TORUS\act\ADJOINTS(\FOCK X):\TAB\TAB (z\act \BLANK):=\mediumMATRIX{1\\&z\\&&z^2\\&&&\ldots}~\BLANK~\mediumMATRIX{1\\&z\\&&z^2\\&&&\ldots}^*
\]
which renders the representation gauge-equivariant:
\begin{gather*}
  \mediumMATRIX{1\\&z\\&&z^2\\&&&\ldots}A\mediumMATRIX{1\\&1\\&&1\\&&&\ldots}\mediumMATRIX{1\\&z\\&&z^2\\&&&\ldots}^*
  =1A\mediumMATRIX{1\\&1\\&&1\\&&&\ldots},\\[2\jot]
  \mediumMATRIX{1\\&z\\&&z^2\\&&&\ldots}X\mediumMATRIX{0\\1&0\\&1&0\\&&&\ldots}\mediumMATRIX{1\\&z\\&&z^2\\&&&\ldots}^*
  =zX\mediumMATRIX{0\\1&0\\&1&0\\&&&\ldots}.
\end{gather*}
Furthermore the representation of compact operators reads
\[
  XX^*~\mapsto~X\mediumMATRIX{0\\1&0\\&1&0\\&&&\ldots}\mediumMATRIX{0&1\\&0&1\\&&0&\\&&&\ldots}X^*= XX^*\mediumMATRIX{0\\&1\\&&1\\&&&\ldots}
\]
With these preliminary computations ready we may now reveal the Fock representation as the universal representation (i.e.~the Toeplitz algebra):

%%%%%%%%%%%%%%%%%%%%%%%%%%%%%%%%%%%%
%%  PROPOSITION: FOCK = TOEPLITZ  %%
%%%%%%%%%%%%%%%%%%%%%%%%%%%%%%%%%%%%
\begin{BREAKPROP}[Fock representation = Toeplitz algebra]
  The Fock representation has trivial kernel and covariance
  \[
    \ker\left(\begin{tikzcd}[column sep=0.5cm]
      A\rar& \ADJOINTS(\FOCK X)
    \end{tikzcd}\right)=0,
    \TAB\TAB
    \cov\left(\begin{tikzcd}[column sep=0.5cm]
      X\rar& \ADJOINTS(\FOCK X)
    \end{tikzcd}\right)=0
  \]
  and as such defines the universal representation (by theorem \ref{UNIQUENESS-THEOREM}).
\end{BREAKPROP}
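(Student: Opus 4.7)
The plan is to verify directly on the explicit Fock model that both the kernel on $A$ and the covariance vanish, using exactly the matrix description of the Fock representation set up immediately before the statement; the conclusion then drops out of the gauge-invariant uniqueness theorem \ref{UNIQUENESS-THEOREM}.

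For faithfulness on the coefficient algebra, the zeroth summand of $\FOCK(X) = A \oplus X \oplus XX \oplus \ldots$ is a copy of $A$ on which $\phi(a)$ acts by left multiplication. So if $\phi(a)=0$, then in particular $aA=0$; specialising to $b=a^*$ and applying the $\CSTAR$-identity $\|a\|^2=\|aa^*\|$ yields $a=0$. This already establishes $\ker(A \to \ADJOINTS(\FOCK X))=0$.

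For triviality of the covariance, recall that by Proposition \ref{HIGHER-MIXED} the representation extends to $\ketbra{X}{X}$ via $\tau(xy^*) = \tau(x)\tau(y)^*$. The key computation is to evaluate, for an arbitrary $a \in A \cap \ketbra{X}{X}$ with representing compact operator $k = \sum_n x_n y_n^*$, the difference $\phi(a) - \tau(k)$ summand by summand on $\FOCK(X)$. On the zeroth summand $A$, the annihilations $\tau(y_n)^*$ have nothing to contract (they would land below $A$) and so vanish; hence $\tau(k)=0$ there and the difference reduces to multiplication by $a$. On each higher summand $X^n$ with $n\geq 1$, unfolding the creation–annihilation formula $\tau(x)\tau(y)^*$ on an elementary tensor $\zeta_1\otimes\zeta'$ gives $x\braket{y|\zeta_1}\otimes\zeta'=(xy^*)(\zeta_1)\otimes\zeta'$, that is $\tau(k)$ acts as $k\otimes\id$ on the leftmost factor, while $\phi(a)$ acts as $a\otimes\id$ on the same factor; since $a$ and $k$ are the \emph{same} adjointable operator on $X$ by the very definition of $A\cap\ketbra{X}{X}$, the two contributions coincide. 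Hence $\phi(a)-\tau(k)$ is supported entirely on the zeroth summand $A$ as left multiplication by $a$, and by the first part this vanishes if and only if $a=0$. So $\cov(X\to\ADJOINTS(\FOCK X))=0$.

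There is no conceptual obstacle here; the argument is a direct computation on the concrete model. The only bookkeeping is to confirm that the canonical extension of $\tau$ to compacts from Proposition \ref{HIGHER-MIXED} matches the matrix picture laid out immediately above the statement — a one-line check from the creation/annihilation formulas. With both kernel and covariance trivial, Theorem \ref{UNIQUENESS-THEOREM} then identifies the Fock representation with $\PIMSNER(0,0)=\TOEPLITZ(X,A)$, the universal representation, as claimed.
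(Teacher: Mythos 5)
Your proposal is correct and follows essentially the same route as the paper: faithfulness is read off from the action of $a$ on the zeroth summand $A\subset\FOCK(X)$, and triviality of the covariance comes down to the observation that $\phi(a)$ and $\tau(k)$ agree on all higher tensor powers but differ by left multiplication with $a$ on the zeroth summand, i.e.\ the images of $A$ and of $\ketbra{X}{X}$ are $A\otimes\diag(1,1,\ldots)$ and $XX^*\otimes\diag(0,1,\ldots)$ with trivial intersection. The only cosmetic difference is that you compute the difference morphism $\phi-\tau$ directly from the definition of the covariance, whereas the paper invokes Proposition~\ref{COVARIANCE:SUBCORRESPONDENCES} to recast it as that intersection --- the underlying diagonal computation is identical.
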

\begin{proof}
  Clearly the representation is faithful (and so has trivial kernel) as
  \[
    \mediumMATRIX{a\\&a\\&&\ldots}\mediumMATRIX{A\\\ldots\\\ldots}=\mediumMATRIX{aA\\\ldots\\\ldots}=\mediumMATRIX{0\\0\\\ldots}\TAB\implies\TAB a=0.
  \]
  As a consequence, we may further compute the covariance as the intersection within the representation (as the special instance from proposition \ref{COVARIANCE:SUBCORRESPONDENCES})
  \[
    \cov\left(\begin{tikzcd}[column sep=0.5cm]
      X\rar& \ADJOINTS(\FOCK X)
    \end{tikzcd}\right) = A\mediumMATRIX{1\\&1\\&&\ldots}\cap XX^*\mediumMATRIX{0\\&1\\&&\ldots} = 0.
  \]
  So the covariance is trivial as well and the proposition follows.
\end{proof}

%%%%%%%%%%%%%%%%%%%%%%%%%%%%%%%%%%%
%%  QUOTIENTS: RELATIVE PIMSNER  %%
%%%%%%%%%%%%%%%%%%%%%%%%%%%%%%%%%%%
Note that more importantly we established the existence of any nontrivial representation whatsoever (equivalently the universal representation is different from the trivial representation) and further also the existence of faithful representations (equivalently the universal representation is faithful).\linebreak
Put in other words, we may now distinguish Toeplitz algebras along the lattice of quotient correspondences (the first dimension of kernel--covariance pairs):
\[
  \TOEPLITZ\left(\tfrac{X}{XK}\right)=\PIMSNER(K,0)~=~\PIMSNER(K',0)=\TOEPLITZ\left(\tfrac{X}{XK'}\right)
  \TAB\implies\TAB
  K=K'
\]
Indeed we have even found the stronger kernel relation from \eqref{RELATIVE-PIMSNER:KERNEL-COVARIANCE}:
\begin{gather*}
  \ker\left( \begin{tikzcd}[column sep=0.5cm]
    A\rar&\TOEPLITZ(X)
  \end{tikzcd} \right)=0
  \TAB\implies\TAB
  \ker\left( \begin{tikzcd}[column sep=0.5cm]
    A\rar&\frac{A}{K}\rar&\TOEPLITZ\left(\frac{X}{XK}\right)
  \end{tikzcd} \right) = K
\end{gather*}
We are still left with the question (which we get to next)
\[
  \PIMSNER(K,I)=\PIMSNER(K',I')\TAB\implies\TAB (~K=K'~|~I=I'~)\TAB?
\]
For this we may now consider the Fock representation as a concrete realization for the Toeplitz algebra and as such further construct every relative Cuntz--Pimsner algebra (along kernel--covariance pairs) as a concrete quotient thereof:
For example given first a quotient correspondence (for some invariant ideal)
\[
  K\ILEQ A:\TAB X^*KX\subset K:\TAB\TAB
  \begin{tikzcd}
    (X,A)\rar&\left(\frac{X}{XK},\frac{A}{K}\right)
  \end{tikzcd}
\]
we may construct the corresponding Toeplitz algebra as quotient by the kernel
(more precisely its ideal generated within the original Toeplitz representation)
\begin{TIKZCD}
  0\rar& (XK,K) \rar\dar& (X,A) \rar\dar& \left(\frac{X}{XK},\frac{A}{K}\right) \rar\dar&0 \\
  0\rar& \TOEPLITZ X (K\subset A)\TOEPLITZ X \rar& \TOEPLITZ X \rar& \TOEPLITZ\left(\frac{X}{XK}\right) \rar&0
\end{TIKZCD}
where we omit as usual the closed linear span for more pointy statements.\\
Before continuing we replace for convenience the original correspondence by the quotient correspondence,
\[
  \left(\tfrac{X}{XK},\tfrac{A}{K}\right)\rightsquigarrow(X,A)
  \TAB\implies\TAB
  \TOEPLITZ\left(\tfrac{X}{XK}\right)\rightsquigarrow\TOEPLITZ (X).
\]
Consider next an ideal (as possible covariance) bounded from above by the maximal covariance (as in proposition \ref{FAITHFUL-REPRESENTATIONS:MAXIMAL-COVARIANCE}),
\[
  I\ILEQ A:\TAB I\subset\max(X,A).
\]
The bound from above is necessary as larger covariances force an additional kernel and as such would factor over some further quotient correspondence.
Similarly as for relative Toeplitz algebras, the relative Cuntz--Pimsner algebra arises now as coequalizer for the chosen covariance
\begin{gather*}
  \PIMSNER(K,I)=
  \coequalizer\left(\begin{tikzcd}[column sep=1cm,row sep=0.5cm]
    I\subset A\cap XX^*\rar\dar& \TOEPLITZ X \dar[equal]\\ XX^*\rar&\TOEPLITZ X
  \end{tikzcd}\right)
\end{gather*}
and as such also as quotient by their difference as in \eqref{DIFFERENCE-MORPHISM}:
\begin{gather*}
  \begin{tikzcd}
    0\rar& \TOEPLITZ X\Big(~(\phi-\tau)I~\Big)\TOEPLITZ X \rar& \TOEPLITZ X \rar& \PIMSNER(X;I) \rar& 0:
  \end{tikzcd}\\[4\jot]
  \hspace{-1cm}(\phi-\tau)=
  \left(\begin{tikzcd}[y=0.5cm]
    \node (A) at (-1,+1) {A\cap XX^*};
    \node (B) at (+1,+1) {\TOEPLITZ X};
    % \node (C) at (-1,-1) {XX^*};
    \node (D) at (+1,-1) {\TOEPLITZ X};
    \draw[->] (A.east) to (B.west);
    % \draw[->] (C.east) to (D.west);
    % \draw[->] (A.south) to (C.north);
    \draw[double equal sign distance] (B.south) to (D.north);
  \end{tikzcd}\right)-
  \left(\begin{tikzcd}[y=0.5cm]
    \node (A) at (-1,+1) {A\cap XX^*};
    % \node (B) at (+1,+1) {\TOEPLITZ X};
    \node (C) at (-1,-1) {XX^*};
    \node (D) at (+1,-1) {\TOEPLITZ X};
    % \draw[->] (A.east) to (B.west);
    \draw[->] (C.east) to (D.west);
    \draw[->] (A.south) to (C.north);
    % \draw[double equal sign distance] (B.south) to (D.north);
  \end{tikzcd}\right).
\end{gather*}
We now invoke the Fock representation as Toeplitz algebra:
Recall that we have already found here the concrete embedding for the coefficient algebra as well as the concrete embedding of compact operators (see above)
\[
  \TOEPLITZ X\subset\ADJOINTS\left[ \mediumMATRIX{A\\X\\\ldots} \right]:\TAB\TAB
  A~\mapsto~A\mediumMATRIX{1\\&1\\&&\ldots},\TAB
  XX^*~\mapsto~XX^*\mediumMATRIX{0\\&1\\&&\ldots}
\]
and as such their difference reads
\[
    A\cap XX^*~\mapsto~ A\cap XX^*\mediumMATRIX{1\\&0\\&&\ldots}.
\]
We therefore found the relative Cuntz--Pimsner algebra as quotient
\begin{equation}
  \label{CUNTZ--PIMSNER--SES}
  \begin{tikzcd}
    0\rar& \TOEPLITZ X\mediumMATRIX{I\\&0\\&&\ldots}\TOEPLITZ X \rar& \TOEPLITZ X \rar& \PIMSNER(X;I) \rar& 0.
  \end{tikzcd}
\end{equation}
This was originally established by Muhly and Solel in \cite[theorem~2.19]{MUHLY-SOLEL-TENSOR}.\\
We now wish to verify that the induced quotient representation remains faithful:
That is we note that the quotient \textbf{could in principle} introduce new kernel,
\[
  I\subset A\cap XX^*:\TAB\TAB\ker\left( \begin{tikzcd}
    A\rar&\TOEPLITZ X\rar&\PIMSNER(X;I)
  \end{tikzcd} \right)=0~?
\]
Indeed we note that the quotient \textbf{does introduce new kernel} as soon as the covariance \textbf{exceeds the maximal covariance} (as we have noted also above).
As such we have to make sure this does not happen as long as the covariance lies below the maximal covariance from proposition \ref{FAITHFUL-REPRESENTATIONS:MAXIMAL-COVARIANCE}.
For this we note that the trivial kernel above may be equivalently verified now as the trivial intersection
\begin{equation}
  \label{KERNEL-RELATION:TRIVIAL-INTERSECTION}
  I\subset \max(X,A):\TAB\TAB A\mediumMATRIX{1\\&1\\&&\ldots}\cap\TOEPLITZ X\mediumMATRIX{I\\&0\\&&\ldots}\TOEPLITZ X=0~?
\end{equation}
On the other hand, we wish to also verify the covariance relation from \eqref{RELATIVE-PIMSNER:KERNEL-COVARIANCE}:
\[
  \cov\left( \begin{tikzcd}
    A\rar& \TOEPLITZ X\rar&\PIMSNER(X;I)
  \end{tikzcd} \right) = I~?
\]
The problem here is that the covariance could in principle increase as well.\linebreak
Indeed the construction (and even our very definition) of relative Cuntz--Pimsner algebras guarantees just a least covariance for the provided covariance ideal.
This becomes more evident as follows:
Consider for this the difference morphism which factors by the universal property via the Toeplitz algebra:
\begin{TIKZCD}
  &&A\cap XX^* \rar[equal]\dar\drar[phantom]{\phi-\tau}& A\cap XX^*\dar\\
  0\rar& \TOEPLITZ X\mediumMATRIX{I\\&0\\&&\ldots}\TOEPLITZ X \rar& \TOEPLITZ X \rar& \PIMSNER(X;I) \rar& 0.
\end{TIKZCD}
As such the covariance may be read off from the common intersection as
\begin{equation}
  \label{COVARIANCE-RELATION:TRIVIAL-INTERSECTION}
  \mediumMATRIX{A\cap XX^*\\&0\\&&\ldots}\cap\TOEPLITZ X\mediumMATRIX{I\\&0\\&&\ldots}\TOEPLITZ X = \mediumMATRIX{I\\&0\\&&\ldots}~?
\end{equation}
Note this meanwhile also highlights how the covariance \textbf{could increase: how?}\\
Let us begin to verify that the representation remains faithful along the quotient.
For this we begin with the following well-known relation (which goes all the way back to an observation by Joachim Cuntz made in \cite{CUNTZ-FAMOUS-CUNTZ-ALGEBRAS}):

\begin{PROP}\label{CUNTZ--PIMSNER--IDEAL}
  The ideal generated by the covariance as in \eqref{CUNTZ--PIMSNER--SES} coincides with the ideal of compact operators of the form
  \[
    \TOEPLITZ X\mediumMATRIX{I\\&0\\&&\ldots}\TOEPLITZ X
    = \mediumMATRIX{A\\X\\\ldots}I\mediumMATRIX{A&X^*&\ldots}
    = \COMPACTS\left[\mediumMATRIX{A\\X\\\ldots}I\right]
  \]
  with implicit closed linear spans for more pointy statements.
\end{PROP}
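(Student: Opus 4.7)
The plan is to verify the two equalities separately: first that the ideal $\TOEPLITZ X\cdot\mediumMATRIX{I\\&0\\&&\ldots}\cdot\TOEPLITZ X$ coincides with the matrix form $\mediumMATRIX{A\\X\\\ldots}I\mediumMATRIX{A&X^*&\ldots}$, and then that this matrix form coincides with the stated algebra of compact operators.

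For the inclusion of the matrix form into the ideal, I would compute directly inside the Fock picture using Dirac calculus. With creation operators $\ket{x_1\cdots x_n}$ realised as the left shifts on Fock space and their adjoints $\bra{y_1\cdots y_m}$ as iterated annihilation operators, the routine computation yields
\[
  \ket{x_1\cdots x_n}\cdot\mediumMATRIX{i\\&0\\&&\ldots}\cdot\bra{y_1\cdots y_m}=\ket{x_1\cdots x_n\cdot i}\bra{y_1\cdots y_m},
\]
sitting as a matrix unit in the $(n,m)$-block of $\mediumMATRIX{A\\X\\\ldots}I\mediumMATRIX{A&X^*&\ldots}$. Taking closed linear spans over all elementary tensors and all $i\in I$ exhausts the matrix form, and each such element clearly lies in the ideal by construction.

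For the reverse inclusion, I would invoke the fine structure of the Toeplitz algebra established in Section~\ref{sec:GAUGE-ACTIONS}: every element of $\TOEPLITZ X$ is densely approximated by finite sums of monomials $\ket{x_1\cdots x_n}a\bra{y_1\cdots y_m}$ with $a\in A$. A generic sandwich of an ideal element therefore reads
\[
  \ket{x_1\cdots x_n}a\bra{y_1\cdots y_m}\cdot\mediumMATRIX{i\\&0\\&&\ldots}\cdot\ket{u_1\cdots u_p}b\bra{v_1\cdots v_q}.
\]
The decisive cancellation is that the corner element has range in the degree-zero slot and vanishes on positive-degree slots, so $\bra{y_j}\cdot\mediumMATRIX{i\\&0\\&&\ldots}=0$ for any $y_j\in X$ and likewise $\mediumMATRIX{i\\&0\\&&\ldots}\cdot\ket{u_k}=0$. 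Only sandwiches with $m=p=0$ survive, and these collapse to $\ket{x_1\cdots x_n}\cdot(aib)\cdot\bra{v_1\cdots v_q}$ with $aib\in AIA\subset I$, which lies in the matrix form as required.

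The remaining identification with $\COMPACTS\left[\mediumMATRIX{A\\X\\\ldots}I\right]$ then follows by Cohen--Hewitt factorisation $I=II$: writing each $i\in I$ as $i=i_1i_2^*$ gives $\ket{\xi}\cdot i\cdot\bra{\eta}=\ket{\xi\cdot i_1}\bra{\eta\cdot i_2}$ and displays the matrix form as $\CLOSESPAN\ket{\FOCK X\cdot I}\bra{\FOCK X\cdot I}$, which by definition equals the compact operators on the Hilbert submodule $\FOCK X\cdot I$. The main obstacle lies in the degree bookkeeping for the reverse inclusion: one must justify the reduction of arbitrary $T,S\in\TOEPLITZ X$ to monomial sums (via the semi-saturated Fell bundle structure from the previous section) and then correctly apply the annihilation--creation cancellation; once that is in place every remaining step is forced.
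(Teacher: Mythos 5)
Your proof is correct and follows essentially the same route as the paper's: the decisive cancellation $\bra{y}\cdot\smallMATRIX{i\\&0\\&&\ldots}=0=\smallMATRIX{i\\&0\\&&\ldots}\cdot\ket{u}$ is exactly the paper's shift-operator identity $L(1-RL)=0=(1-RL)R$, and the reduction to Wick-ordered monomials plus the collapse to $\ket{X^n}I\bra{X^m}$ is the same argument in bra--ket rather than $R,L$ notation. Your added Cohen--Hewitt step identifying the matrix form with $\COMPACTS\left[\FOCK(X)I\right]$ fills in a detail the paper's sketch leaves implicit.
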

\begin{proof}[Sketch of proof:]
  The result follows most easily using the formal right and left shift operators (as further above) from which the covariance ideal reads
  \[
    \mediumMATRIX{I\\&0\\&&\ldots} = I\left[\mediumMATRIX{1\\&1\\&&\ldots}-\mediumMATRIX{0\\&1\\&&\ldots}\right]= I\tensor(1-RL).
  \]
  Recall however that these satisfy the well-known relation
  \[
    LR=1\TAB\implies\TAB L(1-RL)=0=(1-RL)R.
  \]
  As such we obtain as the only contributions for the ideal
  \[
  \TOEPLITZ X\mediumMATRIX{I\\&0\\&&\ldots}\TOEPLITZ X
  = \sum_{mn}\left(X\mediumMATRIX{0\\1&0\\&&\ldots}\right)^m\mediumMATRIX{I\\&0\\&&\ldots}\left(X^*\mediumMATRIX{0&1\\&0\\&&\ldots}\right)^n.
  \]
  On the other hand they generate the system of matrix units such as
  \[
    \mediumMATRIX{0\\1&0\\&&\ldots}^m\mediumMATRIX{1\\&0\\&&\ldots}\mediumMATRIX{0&1\\&0\\&&\ldots}^n
    = \mediumMATRIX{&0\\0& 1 &0\\&0}
    = \mediumMATRIX{0\\1\\0}\mediumMATRIX{0&1&0}.
  \]
  As such we obtain for the above ideal
  \[
    \TOEPLITZ X\mediumMATRIX{I\\&0\\&&\ldots}\TOEPLITZ X = \ldots = \sum_{mn}\mediumMATRIX{0\\X^n\\0}I\mediumMATRIX{0&X^{-n}&0} = \FOCK(X)\FOCK(X)^*
  \]
  which is the desired relation for the covariance ideal.
\end{proof}

%%%%%%%%%%%%%%%%%%%%%%%%%%%%%%%%%%%%%%%%
%%  COMPACT OPERATORS: C0-DIRECT SUM  %%
%%%%%%%%%%%%%%%%%%%%%%%%%%%%%%%%%%%%%%%%
In order to handle the kernel for the induced representation on the quotient
we need to take a closer look into the ideal of compact operators from \ref{CUNTZ--PIMSNER--IDEAL}:\linebreak
We begin for this with the well-known approximation by \enquote{finite rank matrices}.
More precisely, one has for compact operators on Fock space and $S\subset \NATURALS$:
\[
  \begin{tikzcd}[column sep=normal]
    \mediumMATRIX{ 0&& \\ &\COMPACTS\left[\ldots\right]& \\ &&0 }\ni
    \mediumMATRIX{0&\\&k(S)\\&&0}
    \rar{S\to\NATURALS}&
    \mediumMATRIX{k_{00}&k_{01}\\k_{10}&k_{11}\\&&\ldots}
    \in\COMPACTS\left[\mediumMATRIX{A\\X\\\ldots}\right].
  \end{tikzcd}
\]
Indeed one easily verifies this (using Dirac calculus from \ref{DIRAC-CALCULUS}):
\begin{TIKZCD}[column sep=small]
  \mediumMATRIX{\ldots\\&0\\&&1}
  \mediumMATRIX{A\\X\\\ldots}
  \rowMATRIX{A^*&X^*&\ldots}
  \rar&0&\lar
  \mediumMATRIX{A\\X\\\ldots}
  \rowMATRIX{A^*&X^*&\ldots}
  \mediumMATRIX{\ldots\\&0\\&&1}
\end{TIKZCD}
In particular we obtain for the diagonal compact operators as in \ref{CUNTZ--PIMSNER--IDEAL}:
\[
  \COMPACTS\left[\mediumMATRIX{A\\X\\\ldots}I\right]\cap\mediumMATRIX{\ADJOINTS(A)\\&\ADJOINTS(X)\\&&\ldots} =\mediumMATRIX{I\\&XIX^*\\&&\to0}.
\]
Meanwhile the author would like to take a moment to thank
his previous tutor Dominic Enders for highlighting this perspective during personal discussions.

%%%%%%%%%%%%%%%%%%%%%%%%%%%%%%%%%%%%%%%%
%%  COMPACT OPERATORS: DIAGONAL NORM  %%
%%%%%%%%%%%%%%%%%%%%%%%%%%%%%%%%%%%%%%%%
The idea is now to use the previous relation in contrast to the following \textbf{observation by Katsura} which we reformulate in our language:
Consider for this the diagonal operators on Fock space
\[
  \mediumMATRIX{\ADJOINTS(A)\\&\ADJOINTS(X)\\&&\ldots}\subset\ADJOINTS\left[\FOCK X=\mediumMATRIX{A\\X\\\ldots}\right]
\]
and the representation between such diagonal operators:
\begin{TIKZCD}[column sep=0.5cm]
  \mediumMATRIX{0\\&\ADJOINTS(X^n)\\&&0\\&&&0}
  \rar[mapsto]&
  \mediumMATRIX{0\\&0\\&&\ADJOINTS(X^n)\tensor 1\\&&&0}
  \subset
  \mediumMATRIX{0\\&0\\&&\ADJOINTS(X^n\tensor X)\\&&&0}
\end{TIKZCD}
This generally fails to define a faithful representation:
one may for instance consider the graph correspondence for any finite acyclic graph such as
\[
  X=\ell^2\Big( E=\begin{tikzcd}[column sep=0.5cm]
    \bullet \rar&\bullet
  \end{tikzcd} \Big)
  \TAB\implies\TAB
  XX=\ell^2\Big( EE=\emptyset \Big)=0.
\]
Katsura's crucial obervation tells us now that this becomes faithful
when restricted to the subspace of compact operators by our covariance ideal:

%%%%%%%%%%%%%%%%%%%%%%%%%%%%%%%%%%%%%%
%%  PROPOSITION: KATSURA-EMBEDDING  %%
%%%%%%%%%%%%%%%%%%%%%%%%%%%%%%%%%%%%%%
\begin{PROP}[{\cite[Lemma~4.7]{KATSURA-SEMINAL}}]
  \label{KERNEL-RELATION:KATSURAS-EMBEDDING}
  The representation above defines an embedding when restricted to the maximal covariance as in proposition \ref{CUNTZ--PIMSNER--IDEAL},
  \begin{gather*}
    \mediumMATRIX{0\\&\ket{X^n}\max(X,A)\bra{X^n}\\&&0\\&&&0}
    \subset
    \mediumMATRIX{0\\&0\\&&\ket{X^n}\max(X,A)\bra{X^n}\tensor 1\\&&&0}.
  \end{gather*}
  As such the representation defines also an embedding for any covariance ideal below the maximal covariance.
\end{PROP}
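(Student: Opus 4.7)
The plan is to reduce the claimed embedding to the single statement that an operator $S\in\ket{X^n}\max(X,A)\bra{X^n}$ satisfying $S\tensor 1_X=0$ on $X^{n+1}=X^n\tensor X$ must itself vanish; the extension from $\max(X,A)$ to an arbitrary covariance ideal $I\subset\max(X,A)$ is then automatic since the claim is inherited by restriction to a subspace. The core of the argument will lie in the tautology that Katsura's ideal $\max(X,A)=\ker(A\act X)^\perp\cap A\cap\ketbra{X}{X}$ meets $\ker(A\act X)$ only in zero.

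First I would translate the tensor equation into the coefficient algebra: for $\xi\in X^n$ and $\eta\in X$ one has $\braket{S\xi\tensor\eta\mid S\xi\tensor\eta}=\braket{\eta\mid\braket{S\xi\mid S\xi}\eta}$, and vanishing for every $\eta\in X$ is equivalent to $\braket{S\xi\mid S\xi}\cdot X=0$ in $X$, i.e.\ to $\braket{S\xi\mid S\xi}\in\ker(A\act X)$. Second, writing $S=\sum_i\ket{x_i}a_i\bra{y_i}$ with $a_i\in\max(X,A)$, the inner product expands as
\[
  \braket{S\xi\mid S\xi}=\sum_{ij}\braket{\xi\mid y_i}\,a_i^*\,\braket{x_i\mid x_j}\,a_j\,\braket{y_j\mid\xi}\in A\cdot\max(X,A)\cdot A,
\]
and this stays inside $\max(X,A)$ because $\max(X,A)$ is itself an ideal of $A$: it is the intersection of $\ker(A\act X)^\perp$ (an ideal by Proposition~\ref{MAXIMAL-FAITHFUL-ACTION}) with $A\cap\ketbra{X}{X}$ (an ideal since compact operators on $X$ absorb adjointable ones). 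Combining both steps yields $\braket{S\xi\mid S\xi}\in\max(X,A)\cap\ker(A\act X)=0$, so $S\xi=0$ for every $\xi\in X^n$ and hence $S=0$.

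The main obstacle I anticipate is the equivalence in the first step between $S\tensor 1_X=0$ and $\braket{S\xi\mid S\xi}\in\ker(A\act X)$: one must pass from $\braket{\eta\mid\braket{S\xi\mid S\xi}\eta}=0$ for all $\eta$ to $\braket{S\xi\mid S\xi}\cdot X=0$ using positivity of the inner product (so that $\braket{\eta\mid\braket{S\xi\mid S\xi}\eta}=\braket{|S\xi|\eta\mid|S\xi|\eta}$) together with the invariance characterisation \eqref{INVARIANCE-IMAGE}. Once this dictionary is in place the rest of the proof is the two-line calculation above plus the identity $\ker^\perp\cap\ker=0$, which is precisely the feature that singled out $\max(X,A)$ as the correct ceiling for covariance ideals in the faithful setting.
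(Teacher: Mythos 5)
Your argument is correct and is essentially the paper's own proof in elementwise form: where you compute $\braket{S\xi\mid S\xi}=\bra{\xi}S^*S\ket{\xi}$, show it lies in $\max(X,A)$ by the ideal property and in $\ker(A\act X)$ from $S\tensor 1_X=0$, and conclude from $\max(X,A)\cap\ker(A\act X)\subset\ker(A\act X)^\perp\cap\ker(A\act X)=0$, the paper performs the identical compression $\bra{X^n}(\,\cdot\,)\ket{X^n}$ on the kernel ideal and invokes the same orthogonality, recovering $S=0$ from $S^*S$ just as you do. The only (routine) point left implicit in your write-up is passing from finite sums $\sum_i\ket{x_i}a_i\bra{y_i}$ to the closed span, which is handled by continuity of $S\mapsto\braket{S\xi\mid S\xi}$ and closedness of $\max(X,A)$.
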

\begin{proof}[Proof from \cite{KATSURA-SEMINAL}:]
  We note for the kernel (in Dirac braket notation)
  \begin{gather*}
    \bra{X^n}\ker\bigg[\ket{X^n}\max(X,A)\bra{X^n}\act\ket{X^n}\tensor\ket{X}\bigg]\ket{X^n}\\
    \subset\ker\bigg[\braket{X^n|X^n}\max(X,A)\braket{X^n|X^n}\act\ket{X}\bigg]\\
    \subset \ker(A\act X)\cap \ker(A\act X)^\perp=0
  \end{gather*}
  where we have used the obvious inclusion
  \[
    \braket{X^n|X^n}\max(X,A)\braket{X^n|X^n}\subset\max(X,A)\subset\ker(A\act X)^\perp.
  \]
  (Note the intersection reflects also the first level as in proposition \ref{MAXIMAL-FAITHFUL-ACTION}.)\\
  For an ideal such as the kernel above it holds however
  \[
    \bra{X^n}\bigg(\ker[\ldots]=\ker[\ldots]^*\ker[\ldots]\bigg)\ket{X^n}=0
    \TAB\implies\TAB
    \ker[\ldots]\ket{X^n}=0.
  \]
  As such we found the inclusion
  \begin{gather*}
    \ker\bigg[\ket{X^n}\max(X,A)\bra{X^n}\act\ket{X^n}\tensor\ket{X}\bigg]
    \subset\ker\bigg[\ketbra{X^n}{X^n}\act\ket{X^n}\bigg]=0.
  \end{gather*}
  In particular, the same holds true for any covariance below the maximal.
\end{proof}

%%%%%%%%%%%%%%%%%%%%%%%%%%%%%%%%%%%%%%%%%%%
%%  RELATIVE PIMSNER: KERNEL-COVARIANCE  %%
%%%%%%%%%%%%%%%%%%%%%%%%%%%%%%%%%%%%%%%%%%%
With Katsura's observation at hand we may now verify the desired kernel and covariance relation as in \eqref{RELATIVE-PIMSNER:KERNEL-COVARIANCE} which will resolve the second half of our classification of relative Cuntz--Pimsner algebras. We note here that the kernel relation is already due to Katsura as established in \cite{KATSURA-SEMINAL}:

%%%%%%%%%%%%%%%%%%%%%%%%%%%%%%%%%%
%%  THEOREM: KERNEL-COVARIANCE  %%
%%%%%%%%%%%%%%%%%%%%%%%%%%%%%%%%%%
\begin{BREAKTHM}[Relative Cuntz--Pimsner algebras: Kernel and Covariance]
  \label{THEOREM:RELATIVE-PIMSNER:KERNEL-COVARIANCE}
  For the relative Cuntz--Pimsner algebra as above it holds
  \begin{gather*}
    \ker\left( \begin{tikzcd}[column sep=small]
      A\to \TOEPLITZ \to \PIMSNER(X;I)
    \end{tikzcd} \right)=0\\
    \cov\left( \begin{tikzcd}[column sep=small]
      X\to \TOEPLITZ \to \PIMSNER(X;I)
    \end{tikzcd} \right)=I
  \end{gather*}
  As a consequence, the kernel--covariance pairs are also classifying.
\end{BREAKTHM}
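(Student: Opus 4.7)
The plan is to reduce both assertions to the concrete intersection problems \eqref{KERNEL-RELATION:TRIVIAL-INTERSECTION} and \eqref{COVARIANCE-RELATION:TRIVIAL-INTERSECTION} on Fock space, and then establish a single common diagonal identity
\[
  \COMPACTS\!\left[\mediumMATRIX{A\\X\\\ldots}I\right]\cap\mediumMATRIX{\ADJOINTS(A)\\&\ADJOINTS(X)\\&&\ldots} = \mediumMATRIX{I\\&XIX^*\\&&\ldots}
\]
(with the diagonal entries necessarily vanishing in norm), as was foreshadowed in the discussion preceding Proposition~\ref{KERNEL-RELATION:KATSURAS-EMBEDDING}. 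Once this intersection is settled, both claims drop out by inspection of the diagonal entries, so the bulk of the argument is concentrated in this single computation.

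For the intersection formula, I would start with the easier forward inclusion. Proposition~\ref{CUNTZ--PIMSNER--IDEAL} identifies the covariance ideal with $\COMPACTS[\FOCK(X)I]$, so any element is approximable by finite-rank sums of the form $\sum_k\ket{\xi_k\cdot i_k}\bra{\eta_k}$ with $\xi_k\in X^{n_k}$, $\eta_k\in X^{m_k}$, $i_k\in I$; truncating the Fock levels and then pinching down to the diagonal produces entries of the claimed form $X^nIX^{*n}$ with tails going to zero in norm. The converse inclusion is the main obstacle: a priori, a diagonal element arising as a limit could carry level-$n$ content sitting strictly outside $X^nIX^{*n}$, and this is precisely where Katsura's observation (Proposition~\ref{KERNEL-RELATION:KATSURAS-EMBEDDING}) is indispensable. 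Since $I\subset\max(X,A)$, the compressions $\ket{X^n}I\bra{X^n}$ embed faithfully into their action on one further tensor factor; iterating this faithfulness along the tower of Fock levels rules out any such accidental leakage and pins the $n$-th diagonal contribution of the ideal down to exactly $X^nIX^{*n}$.

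With the intersection formula in hand, both assertions follow almost immediately. For the kernel relation, an element $a\smallMATRIX{1\\&1\\&&\ldots}$ of the covariance ideal lies in the above intersection, so its constant diagonal entries would need $\|a\|\to 0$, forcing $a=0$ and hence \eqref{KERNEL-RELATION:TRIVIAL-INTERSECTION}. For the covariance relation, the inclusion $I\subset\cov\bigl(X\to\PIMSNER(X;I)\bigr)$ is built into the very construction, since the difference morphism $\phi-\tau$ annihilates $I$ by design. Conversely, any $a\in A\cap XX^*$ belonging to the covariance places $\smallMATRIX{a\\&0\\&&\ldots}$ inside the covariance ideal, and reading off the first diagonal entry via the intersection formula yields $a\in I$, establishing \eqref{COVARIANCE-RELATION:TRIVIAL-INTERSECTION}. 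The final classification consequence is then immediate: given $\PIMSNER(K,I)=\PIMSNER(K',I')$, comparing kernels at the coefficient-algebra level recovers $K=K'$, after which comparing covariances on the common quotient correspondence $(X/XK,A/K)$ recovers $I=I'$.
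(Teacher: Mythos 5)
Your proposal is correct and follows essentially the same route as the paper: reduce both claims to the intersection problems \eqref{KERNEL-RELATION:TRIVIAL-INTERSECTION} and \eqref{COVARIANCE-RELATION:TRIVIAL-INTERSECTION}, compute the covariance ideal concretely via Proposition~\ref{CUNTZ--PIMSNER--IDEAL}, read off the diagonal corners, and invoke Katsura's embedding (Proposition~\ref{KERNEL-RELATION:KATSURAS-EMBEDDING}) iterated along the Fock levels. The only slight difference in bookkeeping is that the diagonal intersection formula itself already follows from compactness of the corners (no Katsura needed there); the place where Katsura's observation is genuinely load-bearing is exactly the step you label \enquote{constant diagonal entries}, namely $\|a\|=\|a\act X\|=\ldots=\|a\act X^n\|$ against the vanishing tails.
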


%%%%%%%%%%%%%%%%%%%%%%%%%%%%%%%%
%%  PROOF: KERNEL-COVARIANCE  %%
%%%%%%%%%%%%%%%%%%%%%%%%%%%%%%%%
\begin{proof}
  %%%%%%%%%%%%%%%%%%%%%%%%%%%%%%%%%%%%%%%%%
  %%  RELATIVE PIMSNER: KERNEL RELATION  %%
  %%%%%%%%%%%%%%%%%%%%%%%%%%%%%%%%%%%%%%%%%
  We begin with \textbf{the kernel relation} due to Katsura from \cite{KATSURA-SEMINAL}:\\
  As in the beginning discussion we need to verify the relation \eqref{KERNEL-RELATION:TRIVIAL-INTERSECTION}:
  \[
    I\subset \max(X,A):\TAB\TAB A\mediumMATRIX{1\\&1\\&&\ldots}\cap\TOEPLITZ X\mediumMATRIX{I\\&0\\&&\ldots}\TOEPLITZ X=0~?
  \]
  For this we first revealed in proposition \ref{CUNTZ--PIMSNER--IDEAL} that the ideal generated by our covariance (within the Toeplitz algebra) agrees with compact operators.\linebreak
  As such the intersection with the coefficient algebra reads
  \[
    A\mediumMATRIX{1\\&1\\&&\ldots}\cap
    \TOEPLITZ X\mediumMATRIX{I\\&0\\&&\ldots}\TOEPLITZ X =
    A\mediumMATRIX{1\\&1\\&&\ldots}\cap
    \mediumMATRIX{I\\&\ket{X}I\bra{X}\\&&\to0}.
  \]
  In contrast, Katsura's observation from proposition \ref{KERNEL-RELATION:KATSURAS-EMBEDDING} tells us that these embed along the diagonal and as such the norm remains also constant throughout,
  \[
    \mediumMATRIX{a\\&a\\&&\ldots}\in \mediumMATRIX{I\\&\ket{X}I\bra{X}\\&&\ldots}:\TAB\TAB
    \|a\| = \|a\act X\| = \ldots = \|a\act X^n\|.
  \]
  Both the vanishing of compact operators along the diagonal and the constant norm
  are only possible for the trivial intersection and as such the trivial kernel.

  %%%%%%%%%%%%%%%%%%%%%%%%%%%%%%%%%%%%%%%%%%%%%
  %%  RELATIVE PIMSNER: COVARIANCE RELATION  %%
  %%%%%%%%%%%%%%%%%%%%%%%%%%%%%%%%%%%%%%%%%%%%%
  We continue with \textbf{the covariance relation} from \eqref{RELATIVE-PIMSNER:KERNEL-COVARIANCE}:
  For this we may now simply verify the common intersection as in \eqref{COVARIANCE-RELATION:TRIVIAL-INTERSECTION} also using proposition \ref{CUNTZ--PIMSNER--IDEAL}:
  \begin{gather*}
    \mediumMATRIX{A\cap XX^*\\&0\\&&\ldots}\cap
    \TOEPLITZ X\mediumMATRIX{I\\&0\\&&\ldots}\TOEPLITZ X
    = \\[2\jot] =
    \mediumMATRIX{A\cap XX^*\\&0\\&&\ldots}\cap
    \mediumMATRIX{I&IX^*\\XI&XIX^*\\&&\ldots} =
    \mediumMATRIX{I\\&0\\&&\ldots}.
  \end{gather*}
  As such the covariance \textbf{does not increase} and the theorem is proven.
\end{proof}

We have thus established also the \textbf{second half in our classification:}\\
More precisely, we have first found that the class of relative Cuntz--Pimsner algebras exhausts the gauge-equivariant representations (which was the content of the gauge-invariant uniqueness theorem). On the other hand we now found that the parametrisation via kernel--covariance pairs \textbf{is also classifying:}
\[
  \PIMSNER(K,I)=\PIMSNER(K',I')\TAB\implies\TAB (K,I)=(K',I')\TAB\checkmark
\]
\textbf{Altogether we have thus found:} the lattice of kernel--covariance pairs parametrises the entire lattice of gauge-equivariant representations (as points).

%%%%%%%%%%%%%%%%%%%%%%%%%
%%  LATTICE STRUCTURE  %%
%%%%%%%%%%%%%%%%%%%%%%%%%
\section{Lattice structure}

While our discussion (so far) captured the lattice of gauge-equivariant representations \textbf{as individual points} along the lattice,
this still leaves open how the lattice structure of kernel--covariance pairs
\textbf{reflects the lattice structure} of gauge-equivariant representations (among each other) to which we now get:\linebreak
Recall for this that our kernel--covariance pairs encode the covariance for the quotient correspondence (which rendered the representation faithful)
\[
  \begin{tikzcd}[column sep=small]
    (X,A)\rar&B
  \end{tikzcd}:\TAB\TAB
  K=\ker\Bigl(A\to B\Bigr)
  \TAB\implies\TAB
  I=\cov\Bigl(\tfrac{X}{XK}\to B\Bigr)
\]
and its intrinsic characterisation on the quotient (as bounded ideal)
\[
  \Bigl(~K\ILEQ A~\Big|~X^*KX\subset K~\Bigr)
  \TAB\implies\TAB
  \Bigl(~I\ILEQ A/K~\Big|~I\subset\max\left( \tfrac{X}{XK} \right)~\Bigr).
\]
We therefore begin with a translation of our kernel--covariance pairs which lives on the original correspondence.
This allows us to give an intrinsic order on kernel--covariance pairs reflecting the lattice structure of representations.\linebreak
Along this translation we further reveal Katsura' mysterious T-pairs as nothing but our original kernel--covariance pairs (with maximal covariance in disguise).

For our translation we first recall that the covariance for an embedding (faithful representation) may be simply read off as common intersection within the ambient algebra (as in proposition \ref{COVARIANCE:SUBCORRESPONDENCES})
\[
  (X,A)\subset B:\TAB\TAB
  \cov(X\to B) =
  \im(A\to B)\cap
  \im(XX^*\to B).
\]
More precisely, one may take the portion of the coefficient algebra
\[
  \cov(X\to B)=\Bigl\{~a\in A~\Big|~a\in\im(XX^*\to B)~\Bigr\}=A\cap\im(XX^*).
\]
That is however the same amount of information as the actual intersection,\linebreak
as long as one keeps track of the embedding for the coefficient algebra:
\begin{center}
  \begin{tikzpicture}[scale=0.9]
    \draw (-3,-2) rectangle (3,3);
    \node at (2.25,0.75) {\Large$B$};
    \begin{scope}[shift={(-8,1)}]
      \fill[shift={(0,-2.5)},gray!20]
        (0,0) node[black]{$XX^*$} ellipse (3/2 and 1);
      \draw[shift={(0,1)},clip]
        (0,0) node[shift={(0,0.5)}]{$A$} ellipse (1 and 4/3);
      \fill[shift={(0,0)},gray!20]
        (0,0) node[shift={(0,1/3)},black] {\footnotesize$\im(XX^*)$}  ellipse (3/2 and 1);
    \end{scope}
    \draw[double equal sign distance,shift={(0,0.25)}] (-6.5,1.75) to (-1.5,1.25);
    \draw[<->,shift={(1/3,0)},bend right=15,gray!50] (-6.5,-1.5) to (-2,-2/3);
    \fill[shift={(0,-0)},gray!20] (0,0) node[shift={(0,1/3)},black] {\footnotesize$\im(XX^*)$} ellipse (3/2 and 1);
    \draw[shift={(0,+1)},clip] (0,0) node[shift={(0,0.5)}]{\footnotesize$\im(A)$} ellipse (1 and 4/3);
  \end{tikzpicture}
\end{center}
With this picture in mind, we continue on some general representation
\[
  \begin{tikzcd}
    (X,A)\rar& \left( \frac{X}{XK},\frac{A}{K} \right)\rar[dashed]& B
  \end{tikzcd}:
  \TAB\TAB
  K=\ker(A\to B).
\]
We first note that for a quotient (i.e.~surjective mapping) there is absolutely no loss of generality when pulling back any ideal along the quotient since for
\[
  I\ILEQ A/K\TAB\rightsquigarrow\TAB
  \left( A\to\tfrac{A}{K} \right)\inv I\ILEQ A:\TAB\TAB
  I=\left( A\to\tfrac{A}{K} \right)\left( A\to\tfrac{A}{K} \right)\inv I
\]
and so we may use the equivalent intrinsic definition of kernel--covariance pairs
as those with covariance below the maximal covariance within the quotient:
\begin{equation}
  \label{COVARIANCE-IDEAL:QUOTIENT-PULLBACK}
  (I+K)\ILEQ  A:\TAB\TAB
  I/K\subset\max\left( \tfrac{X}{XK} \right)\TAB\text{!!}
\end{equation}
We wrote the covariance ideal (here and below) as sum with the kernel ideal simply to guarantee that the ideal arises indeed as pullback from the quotient.

On the other hand, note that the amount of covariance (as described above) does not change either in the sense of how much common intersection the coefficient algebra has with compact operators since (see also proposition \ref{SHORT-EXACT:COMPACTS}):
\begin{gather*}
  \im\Bigl( \begin{tikzcd}[column sep=normal]
    A\rar& \frac{A}{K} \rar& B
  \end{tikzcd} \Bigr) =
  \im\Bigl( \begin{tikzcd}[column sep=normal]
    \frac{A}{K} \rar& B
  \end{tikzcd} \Bigr), \\[2\jot]
  \im\Bigl( \begin{tikzcd}[column sep=small]
  XX^*\rar& \left(\frac{X}{XK}\right)\left(\frac{X}{XK}\right)^* \rar& B
  \end{tikzcd} \Bigr) =
  \im\Bigl( \begin{tikzcd}[column sep=small]
    \left(\frac{X}{XK}\right)\left(\frac{X}{XK}\right)^* \rar& B
  \end{tikzcd} \Bigr).
\end{gather*}
So there is really no loss of generality from this perspective either.\\
As such we obtain another equivalent \textbf{extrinsic definition} of kernel--covariance pairs as those with
covariance ideal describing the amount of common intersection between the coefficient algebra and compact operators:
\[
  \begin{tikzcd}[column sep=small]
    (X,A)\rar&B
  \end{tikzcd}:\TAB\TAB
  (I+K) = \im(A\to B)\cap\im(XX^*\to B).
\]
For comparison between kernel--covariance pairs we however take from now on the portion within the coefficient algebra as above, that is
\[
  (I+K)=\Big\{~a\in A~\Big|~a\in\im(XX^*\to B)~\Big\} = A\cap\im(XX^*\to B)
\]
and we note this agrees with our intrinsic definition (somewhat obvious now).\linebreak
Meanwhile we also keep in mind the viewpoint on the covariance as amount of common intersection
as it provides an interesting perspective on representations.

%%%%%%%%%%%%%%%%%%%%%%%%%%%%%%%%%%%%%%
%%  REPRESENTATIONS: LATTICE ORDER  %%
%%%%%%%%%%%%%%%%%%%%%%%%%%%%%%%%%%%%%%
With both these definitions at hand (the intrinsic and the extrinsic) we may now get to the lattice of gauge-equivariant representations.
Given a pair of representations we define the usual order of representations as
\[
  \Bigl(~(X,A)\to B~\Bigr)
  \leq
  \Bigl(~(X,A)\to B'~\Bigr):\TAB\TAB
  \begin{tikzcd}
    (X,A)\rar& B\rar[dashed]& B'.
  \end{tikzcd}
\]
More precisely, that is the representation factors over the other and note that the sole existence of such a factorisation entails a unique such as the representations are all generated as an operator algebra by (the image of) the correspondence:
\[
  B=\CSTAR(A\cup X)
  \TAB\implies\TAB
  \begin{tikzcd}[column sep=small]
    B \rar[dashed]& B'
  \end{tikzcd}~\text{uniquely}\TAB\checkmark
\]
Given a factorisation we now easily infer for their kernel and covariance
\begin{gather*}
  \ker(A\to B)\subset \ker(A\to B\to B'),\\[2\jot]
  A\cap\im(XX^*\to B)\subset A\cap \im(XX^*\to B\to B').
\end{gather*}
Indeed the latter may be easily seen as (somewhat trivially)
\begin{gather*}
  \im(a\to B)\in\im(XX^*\to B)
  ~\implies~
  \im(a\to B\to B')\in\im(XX^*\to B\to B').
\end{gather*}
Schematically the amount of intersection could look something like:
\begin{center}
  \begin{tikzpicture}[scale=2/3]
    \begin{scope}[shift={(-5,0)}]
      \draw (-4,-2) rectangle (4,2) node [anchor=north east] {$B$};
      \fill[shift={(+1,0)},gray!20] (0,0) ellipse (5/3 and 1);
      \draw[shift={(-1,0)}] (0,0) ellipse (5/3 and 1);
    \end{scope}
    \begin{scope}[shift={(+5,0)}]
      \draw (-4,-2) rectangle (2,2) node [anchor=north east] {$B'$};
      \fill[shift={(-0.5,0)},gray!20] (0,0) ellipse (5/3 and 1.2);
      \draw[shift={(-1.5,0)}] (0,0) ellipse (5/3 and 1.2);
    \end{scope}
    \draw[->] ($(4,0)-(5,0)$) to ($(-4,0)+(5,0)$);
  \end{tikzpicture}
\end{center}
So we have found the following converse direction (using theorem \ref{THEOREM:RELATIVE-PIMSNER:KERNEL-COVARIANCE}):
\[
  \Bigl(~K\subset L~\Big|~I+K\subset J+L~\Bigr)
  \TAB\impliedby\TAB
  \PIMSNER(K,I)\leq\PIMSNER(L,J)
  \TAB\checkmark
\]
What about the forward direction? That is assume we have an inclusion of kernel--covariance pairs as above.
As we have an inclusion of kernel ideals we obtain in particular for their quotient correspondence
\begin{TIKZCD}[column sep=2cm]
  (X,A)\rar&
  \left( \frac{X}{XK},\frac{A}{K} \right) \rar\dar&
  \left( \frac{X}{XL},\frac{A}{L} \right) \dar \\
  & \PIMSNER(K,I) & \PIMSNER(L,J)
\end{TIKZCD}
and so we may replace our correspondence as usual by the quotient
\begin{gather*}
  \left( \tfrac{X}{XK},\tfrac{A}{K} \right)\rightsquigarrow(X,A)
  \TAB\implies\TAB
  \PIMSNER(K,I)\rightsquigarrow\PIMSNER(0,I).
\end{gather*}
Recall that the relative Cuntz--Pimsner algebra satisfies (by definition)
\[
  \begin{tikzcd}[column sep=small]
    (X,A)\rar& \PIMSNER(X;I) \rar[dashed]&B
  \end{tikzcd}
  \TAB\iff\TAB
  I\subset \cov(~X\to B~)
\]
and as such we need to verify the least amount of covariance
\[
  I\subset\cov\left( \begin{tikzcd}[column sep=small]
    (X,A)\rar& \left( \tfrac{X}{XL},\tfrac{A}{L} \right) \rar&\PIMSNER(L,J)
  \end{tikzcd} \right)~?
\]
This basically follows now from our study of kernel morphisms and covariance ideals from section \ref{sec:COVARIANCES},
which we recall now for more clarity in our context:\linebreak
At first we found that kernel and cokernel morphisms have full covariance.
That is in our context the fully commutative diagram for the quotient morphism and on the other hand the covariance diagram for the quotient representation:
\[
  \begin{tikzcd}
    A\cap XX^*\rar\dar& A/L\dar \\
    XX^*\rar& \left(\frac{X}{XL}\right)\left(\frac{X}{XL}\right)^*
  \end{tikzcd}
  \TAB\text{and}\TAB
  \begin{tikzcd}
    J/L\rar\dar& \PIMSNER(L,J)\dar[equal]\\
    \left(\frac{X}{XL}\right)\left(\frac{X}{XL}\right)^* \rar&\PIMSNER(L,J).
  \end{tikzcd}
\]
Combing these with our assumption on covariance ideals we obtain
\begin{gather*}
  \begin{tikzcd}[column sep=2cm]
    (I+0)\rar\dar& J/L\rar\dar&\PIMSNER(L,J)\dar[equal]\\
    XX^*\rar& \left(\frac{X}{XL}\right)\left(\frac{X}{XL}\right)^* \rar&\PIMSNER(L,J)
  \end{tikzcd}\\[2\jot]
\end{gather*}
and as such the desired amount of covariance,
\[
  (I+0)\subset (J+L)\TAB\implies\TAB I\subset\cov\Bigl(~(X,A)\to \PIMSNER(L,J)~\Bigr).
\]
As such we also found the forward direction and so the order isomorphism,\linebreak
which is the \textbf{main conclusion of this article:}

%%%%%%%%%%%%%%%%%%%%%%%%%%%%%%%%%
%%  MAIN THEOREM: LATTICE ISO  %%
%%%%%%%%%%%%%%%%%%%%%%%%%%%%%%%%%
\begin{BREAKTHM}[Kernel--covariance pairs: Order isomorphism]
  The kernel--covariance pairs as in \eqref{COVARIANCE-IDEAL:QUOTIENT-PULLBACK}
  define the order isomorphism
  \begin{equation}
    \label{LATTICE-ISOMORPHISM}
    \Bigl(~K\subset L~\Big|~I+K\subset J+L~\Bigr)
    \TAB\iff\TAB
    \PIMSNER(K,I)\leq\PIMSNER(L,J)
  \end{equation}
  and as such the lattice of \textbf{kernel--covariance pairs} with its natural order by inclusion
  describes the entire \textbf{lattice of gauge-equivariant} representations,
  equivalently the entire \textbf{lattice of gauge-invariant ideals.}
\end{BREAKTHM}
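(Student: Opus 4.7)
The plan is to prove the two implications of \eqref{LATTICE-ISOMORPHISM} separately, with the substantive work concentrated in the reverse direction. For the forward direction, assume a factorisation $\PIMSNER(K,I)\leq\PIMSNER(L,J)$ exists as a connecting morphism. By Theorem \ref{THEOREM:RELATIVE-PIMSNER:KERNEL-COVARIANCE}, the kernel of $A\to\PIMSNER(K,I)$ is precisely $K$ and its extrinsic covariance (the pullback to $A$ describing common intersection with compact operators) is precisely $I+K$, with analogous identifications $L$ and $J+L$ for $\PIMSNER(L,J)$. Since post-composition with the connecting morphism can only enlarge the kernel and only enlarge the amount of common intersection of $\im(A)$ with $\im(XX^*)$, we immediately read off $K\subset L$ and $I+K\subset J+L$.

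For the converse, suppose $(K\subset L\mid I+K\subset J+L)$. The inclusion $K\subset L$ first yields a canonical cokernel morphism $(X/XK,A/K)\to(X/XL,A/L)$, allowing one to replace $(X,A)$ by its quotient and reduce immediately to the case $K=0$, so it remains to construct $\PIMSNER(0,I)\to\PIMSNER(L,J)$. By the universal property of relative Cuntz--Pimsner algebras, this morphism exists precisely when $I\subset\cov\bigl((X,A)\to\PIMSNER(L,J)\bigr)$. Here I would invoke the full covariance of cokernel morphisms from Proposition \ref{COVARIANCE:KERNEL-COKERNEL}: the commutative covariance square for $(X,A)\to(X/XL,A/L)$ maps $A\cap XX^*$ onto $(A/L)\cap(X/XL)(X/XL)^*$ cleanly. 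Chaining this square with the covariance square for $(X/XL,A/L)\to\PIMSNER(L,J)$, whose covariance equals exactly $J/L$ (again by Theorem \ref{THEOREM:RELATIVE-PIMSNER:KERNEL-COVARIANCE}), the hypothesis $I\subset J+L$ translates under the quotient $A\to A/L$ to $I$ landing inside $J/L$, placing $I$ into the composed commuting diagram and delivering the required covariance inclusion.

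The main obstacle I expect is the bookkeeping between the intrinsic description of the covariance component (as an ideal in $A/K$ bounded by $\max(X/XK)$) and its extrinsic pullback description (as the ideal $I+K\subset A$ recording common intersection with compacts), because the two Pimsner algebras on either side live over \emph{different} quotient correspondences and their covariance ideals must be compared within a common ambient algebra. Proposition \ref{COVARIANCE:ORIGINAL=QUOTIENT-PULLBACK}, which identifies the covariance of a composition as the pullback of the covariance downstream, is exactly what makes this translation effortless and lets the diagram chase of the previous paragraph go through verbatim. Once the two directions are combined, the resulting bijection between kernel--covariance pairs and gauge-equivariant representations is automatically order-preserving in both directions, hence an order isomorphism.
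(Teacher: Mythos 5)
Your proposal is correct and follows essentially the same route as the paper: one direction by monotonicity of kernels and of the common intersection with compacts under post-composition together with Theorem \ref{THEOREM:RELATIVE-PIMSNER:KERNEL-COVARIANCE}, and the other by reducing to $K=0$ and chaining the full-covariance square of the cokernel morphism with the covariance square of $(X/XL,A/L)\to\PIMSNER(L,J)$. The only cosmetic difference is that you route the intrinsic/extrinsic translation through Proposition \ref{COVARIANCE:ORIGINAL=QUOTIENT-PULLBACK} where the paper works directly from Proposition \ref{COVARIANCE:KERNEL-COKERNEL}; the substance is the same.
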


%%%%%%%%%%%%%%%%%%%%%%%%%%%%%%%%%%%
%%  GRAPH EXAMPLE: GAUGE IDEALS  %%
%%%%%%%%%%%%%%%%%%%%%%%%%%%%%%%%%%%
Let us give an example of our result for some graph algebra,\\
also to illustrate the ease of working with such kernel--covariance pairs:
\begin{BREAKEXM}[Graph correspondences: gauge-invariant ideals]
  Consider a graph correspondence as in example \ref{GRAPH-CORRESPONDENCES:TENSOR-POWERS},
  \[
    X=\ell^2\Big(E=\edges\Big),\TAB A=c_0(\mathrm{vertices})
  \]
  and recall its quotient graphs as in \ref{GRAPH-CORRESPONDENCES:QUOTIENT-GRAPHS}
  (given by hereditary ideals as in \ref{GRAPH-CORRESPONDENCES:HEREDITARY-IDEALS})\linebreak
  as well as their covariance ideals given by their sets of regular vertices as in \ref{GRAPH-CORRESPONDENCES:MAXIMAL-COVARIANCE}.\\
  As an example consider the following graph and its quotient graphs
  \vspace*{-\baselineskip/2}
  \[\begin{tikzcd}[column sep=1.5cm]
    \begin{gathered}
      K=0:\\
      \left(\begin{tikzpicture}[baseline=8pt]
        \node (a) at (0,0) {$a$} edge [loop] ();
        \node (b) at (1,0) {$b$} edge [loop] ();
        \draw[->] (a)--(b);
      \end{tikzpicture}\right)\\[\baselineskip]
    \end{gathered}
    \rar&
    \begin{gathered}
      K=(a):\\
      \left(\begin{tikzpicture}[baseline=8pt]
        \node (b) at (1,0) {$b$} edge [loop] ();
      \end{tikzpicture}\right)\\[\baselineskip]
    \end{gathered}
    \rar&
    \begin{gathered}
      K=(a\cup b):\\
      \left(\begin{tikzpicture}
        \node {$\emptyset$};
      \end{tikzpicture}\right).\\[1.2\baselineskip]
    \end{gathered}
  \end{tikzcd}
  \vspace*{-1.5\baselineskip}\]
  Then its lattice of gauge-equivariant representations reads (as Hasse diagram)
  \[\hspace{-1cm}\begin{tikzcd}[column sep=small,row sep=1.5cm,arrows={crossing over}]
    & ~K=0: && ~K=(a): && ~K=(a\cup b): \\[-1.2cm]
    & I=0 \arrow[rr]\dlar\drar && I=(a) \arrow[dd]\arrow[drr] && \\
    I=(a) \drar\arrow[urrr] && I=(b) \dlar\drar &&& I=(a\cup b)\\
    & I=(a\cup b) \arrow[rr] && I=(a\cup b) \arrow[urr]
  \end{tikzcd}\]
  and so equivalently also the lattice of gauge-invariant ideals.\\
  Note how we now easily read off the order via kernel--covariance pairs.
\end{BREAKEXM}

Before we continue let us make a few remarks on the order isomorphism:\\
In particular the following discussion on connecting morphisms will basically cover the notion of suprema and infima as addressed in the following remark:
% (We will moreover provide an example for these in \ref{EXAMPLE:JOIN-KERNEL-COVARIANCE-PAIRS} further below.)

%%%%%%%%%%%%%%%%%%%%%%%%%%%%%%%%%
%%  REMARKS: SUPREMA + INFIMA  %%
%%%%%%%%%%%%%%%%%%%%%%%%%%%%%%%%%
\begin{BREAKREM}[Lattice isomorphism: suprema and infima]
Note that as both lattices are order isomorphic they will be also lattice isomorphic as unions and intersections (finite or arbitrary) as well as top and bottom elements are determined as suprema and infima respectively
\[
  \mathrm{order~iso}\left(\sup_s a_s\right)=\sup_s\Bigl(\mathrm{order~iso}a_s\Bigr)
\]
which aside their existence depend only on the given order.\\
Put in other words the notion of a lattice is really \textbf{just a pure property} and defines no additional structure
so there is \textbf{really no difference between} the lattice of gauge-equivariant representations and the lattice of kernel--covariance pairs.
We will however later discover that arbitrary suprema and infima of kernel--covariance pairs do not necessarily always arise as intersections and sums of their kernel and covariance ideals, that is we only have
\[
  \inf_s(K_s|I_s)\leq\left(~\bigcap_sK_s~\bigg|~\bigcap_sI_s~\right)
  \TAB\text{and}\TAB
  \sup_s(K_s|I_s)\geq\left(~\sum_sK_s~\bigg|~\sum_s I_s~\right).
\]
Indeed while the intersection und sum of invariant ideals remain invariant
\[
  \bra{X}\left(\bigcap K_s\right)\ket{X}\subset \bigcap K_s
  \TAB\text{and}\TAB
  \bra{X}\left(\sum K_s\right)\ket{X}\subset \sum K_s
\]
the intersection and sum of covariance ideals may not always end up below the maximal covariance,
\[
  \bigcap I_s\subseteq \max\left( \frac{X}{X\left(\bigcap K_s\right)} \right)
  \TAB\text{and}\TAB
  \sum I_s\subseteq \max\left( \frac{X}{X\left(\sum K_s\right)} \right)~?
\]
Schematically that is the next possible kernel--covariance pair may lie just further beyond (as seen within the lattice of any pairs of ideals) such as
\begin{TIKZCD}[row sep=small]
  \Bigl(~K:~\ldots~\Big|~I\subset\max(\frac{X}{XK})~\checkmark~\Bigr) \\
  \Bigl(~(K_1+K_2):~\ldots~\Big|~ (I_1+I_2)\nsubseteq\max\left( \tfrac{X}{X(K_1+K_2)} \right)~\Bigr) \uar \\
  \Bigl(~K_s:~\ldots~\Big|~I_s\subset\max\left(\tfrac{X}{XK_s}\right)~\checkmark~\Bigr). \uar
\end{TIKZCD}
For example the requirement to have at least as much covariance as all the given covariance ideals \textbf{can force larger kernel} than just the sum of given kernel ideals,
or put more drastically there may exist \textbf{no connecting morphism} from each relative Cuntz--Pimsner algebra to the cokernel strand over the sum of kernel ideals. We will see such an example in \ref{EXAMPLE:JOIN-KERNEL-COVARIANCE-PAIRS} below involving already even just a pair of kernel--covariance pairs:
\begin{TIKZCD}[column sep=large]
  (X,A) \rar\dar &
    \left(\frac{X}{XK_1},\frac{A}{K_1}\right) \rar\dar&
    \left(\frac{X}{XK_2},\frac{A}{K_2}\right) \rar\dar&
    \left(\frac{X}{XK},\frac{A}{K}\right) \dar\\
  \TAB\mathstrut\ldots\mathstrut\TAB \dar&
    \PIMSNER(K_1,I_1) \dar\arrow[dr,dashed,lightgray,bend right=15,"\times"description]\arrow[r,dashed,lightgray,"\times"description]&
    \PIMSNER(K_2,I_2) \dar\rar&
    \PIMSNER(K,I) \arrow[from=ll,bend right=20,crossing over]\dar \\
  \TAB\mathstrut\ldots\mathstrut\TAB &
    \TAB\mathstrut\ldots\mathstrut\TAB &
    \TAB\mathstrut\ldots\mathstrut\TAB &
    \TAB\mathstrut\ldots\mathstrut\TAB
\end{TIKZCD}
Furthermore we note that from either description the order defines a partial order (as opposed to just a preorder) as easily seen from
\begin{gather*}
  \hspace{0.5cm}\left(\begin{smalltikzcd}
    (X,A)\rar& \CSTAR(X\cup A)=B
  \end{smalltikzcd}\right)
  \TAB\text{and}\TAB
  \left(\begin{smalltikzcd}
    (X,A)\rar& \CSTAR(X\cup A)=B'
  \end{smalltikzcd}\right):\\[2\jot]
  \left(\begin{tikzcd}[row sep=small]
    (X,A) \rar[equal]\dar& (X,A) \rar[equal]\dar& (X,A) \dar\\
    B \rar& B' \rar& B
  \end{tikzcd}\right) =
  \left(\begin{tikzcd}[row sep=small]
    (X,A) \rar[equal]\dar& (X,A) \dar\\
    B \rar[equal]& B
  \end{tikzcd}\right)
\end{gather*}
so one is a retract of the other and similarly the other way around,
or one may equivalently also argue using their kernel from the Toeplitz algebra
\[
  \left(\begin{tikzcd}
    \TOEPLITZ X\rar& B\rar& B'
  \end{tikzcd}\right)
  \TAB\implies\TAB
  \ker\Bigl(\TOEPLITZ X\to B\Bigr)
  \subset
  \ker\Bigl(\TOEPLITZ X\to B'\Bigr)
\]
from which they had been already the same quotient
\[
  \ker\Bigl(\TOEPLITZ X\to B\Bigr) =
  \ker\Bigl(\TOEPLITZ X\to B'\Bigr)
  \TAB\implies\TAB
  \left(\begin{tikzcd}
    \TOEPLITZ X\rar& B=B'
  \end{tikzcd}\right).
\]
Or one may now also argue using kernel--covariance pairs,
\[
  \Bigl(~K\subset L\subset K~\Big|~I\subset J\subset I~\Bigr)
  \TAB\implies\TAB
  \Bigl(~K=L~\Big|~I=J~\Bigr).
\]
Altogether we remark that the lattice of gauge-equivariant representations coincides entirely with the lattice of kernel--covariance pairs,
while suprema and infima of kernel--covariance pairs may lay only beyond of just the intersection and sum of their kernel and covariance ideals.
\end{BREAKREM}

%%%%%%%%%%%%%%%%%%%%%%%%%%%%%%%%%%%%%
%%  LATTICE: CONNECTING MORPHISMS  %%
%%%%%%%%%%%%%%%%%%%%%%%%%%%%%%%%%%%%%
With this in mind we continue on the remaining questions from section \ref{sec:RELATIVE-PIMSNER}:
More precisely, we wish to find suitable characterisations when morphisms exists between different quotient strands (based on kernel--covariance pairs)
\begin{TIKZCD}[column sep=1.5cm]
  \left(\frac{X}{XK},\frac{A}{K}\right) \rar\dar& \PIMSNER(K,0) \rar\dar& \PIMSNER(K,I) \rar\dar[dashed]& \PIMSNER(K,\max)\\
  \left(\frac{X}{XL},\frac{A}{L}\right) \rar& \PIMSNER(L,0) \rar& \PIMSNER(L,J) \rar& \PIMSNER(L,\max)
\end{TIKZCD}
and we warn ahead that these \textbf{won't always exist.}
Now at first we note that clearly the existence infers the inclusion of kernel ideals and we may for simplicity replace the original correspondence with the quotient
\begin{TIKZCD}[column sep=1cm]
  (X,A):=\left(\frac{X}{XK},\frac{A}{K}\right) \rar& \PIMSNER(K=0,0) \rar& \PIMSNER(K=0,I) \rar& \ldots
\end{TIKZCD}
In particular there always exist connecting morphism from at least the Toeplitz algebra and so also some further relative Cuntz-Pimsner algebras
\begin{TIKZCD}[column sep=1.5cm,row sep=1.5cm]
  \TOEPLITZ\left(\frac{X}{XK}\right) = \PIMSNER(K,0) \rar\dar\arrow[drr,"{\checkmark}"description]&
  \ldots\rar\arrow[dr,"{\checkmark}"description]&
  \PIMSNER(K,I=?) \rar\arrow[d,"{\checkmark}"description]&
  \ldots\arrow[dl,dashed,"{\times}"description]\\
  \TOEPLITZ\left(\frac{X}{XL}\right) = \PIMSNER(L,0) \arrow[rr]&&
  \PIMSNER(L,J) \rar& \PIMSNER(L,\max).
\end{TIKZCD}
As such the first question is to find the smallest relative Cuntz--Pimsner algebra from which connecting morphisms exist.
This may be now easily solved as
\[
  \SHORTTIKZCD{ \PIMSNER(K,I) \rar[dashed]& \PIMSNER(L,J) }\TAB\iff\TAB (I+K)\subset (J+L)
\]
and as such the largest covariance ideal (i.e.~within the maximal covariance) simply arises as intersection with the given covariance from the quotient,
\[
  I= \max\left( \tfrac{X}{XK} \right)\cap\bigl(A/K\to A/L\bigr)\inv J =\max\left( \tfrac{X}{XK} \right)\cap (J+K).
\]
Schematically the intersection can look something like this:
\begin{center}
  \begin{tikzpicture}[scale=0.6]
    \draw (-4,-3) rectangle (4,3) node[anchor=north east]{$A/K$};
    \draw[->] (4.5,0) -- (5.5,0);
    \fill[gray!10] (1.4,0.4) ellipse (1.2 and 0.9);
    \node[gray!75] at (-0.7,0.2) {\scriptsize$J\cap\max$};
    \begin{scope}
      \path[clip] (-0.35,-0.55) ellipse (2.6 and 1.4);
      \node at (-0.35,-1.3) {\scriptsize$\max\left(\tfrac{X}{XK}\right)$};
      \fill[gray!40] (1.4,0.4) ellipse (1.2 and 0.9);
    \end{scope}
    \draw (-0.35,-0.55) ellipse (2.6 and 1.4);
    \begin{scope}[shift={(10,0)}]
      \draw (-4,-3) rectangle (4,3) node[anchor=north east]{$A/L$};
      \draw (1.8,0.5) ellipse (1.8 and 1.3);
      \fill[gray!20] (1.4,0.4) node[black]{$J$} ellipse (1.2 and 0.9);
      \node at (1.8,-1.3) {\scriptsize$\max\left(\tfrac{X}{XL}\right)$};
    \end{scope}
  \end{tikzpicture}
\end{center}
The reader may easily find some examples with (using graph algebras as above):
\[
  \max\left(\tfrac{X}{XK}\right)\neq0:\TAB (J\cap\max)=0~~/~~0\neq(J\cap\max)\neq\max~~/~~(J\cap\max)=\max~~?
\]
Moreover one may now easily guess the \textbf{meet of kernel--covariance pairs:}
\[
  \bigwedge_s(~K_s~|~I_s~)= \left(~K=\bigcap K_s~\bigg|~I=\bigcap I_s\cap \max\left(\tfrac{X}{XK}\right)~\right)
\]
So far about connecting morphisms \textbf{from preceeding cokernel strands.}\\
The other direction however is more interesting:
That is a relative Cuntz--Pimsner does not necessarily connect to every following quotient correspondence
and in there not even beginning at every relative Cuntz-Pimsner algebra either,
\begin{TIKZCD}[column sep=1.5cm]
  \PIMSNER(K,0) \dar\rar& \PIMSNER(K,I)\rar
  \dar[dashed]
  \drar[dashed]&
  \PIMSNER(K,\max) \\
  \PIMSNER(L=?,0) \rar& \PIMSNER(L=?,J=?) \rar& \PIMSNER(L=?,\max).
\end{TIKZCD}
Note this also describes the lattice of gauge-invariant ideals for the given relative Cuntz--Pimsner algebra (simply as each such defines a quotient).\linebreak
We note however that as soon as it connects to another relative Cuntz--Pimsner algebra (in some following quotient correspondence) then it certainly also does so to the absolute Cuntz--Pimsner algebra for that quotient correspondence.\linebreak
As such this introduces an obstruction which may be now handle using \eqref{LATTICE-ISOMORPHISM}:
\[
  \PIMSNER(K,I)\leq \PIMSNER(L,\max)
  \TAB\iff\TAB
  J_{\min}:=\Bigl(A/K\to A/L\Bigr)I\subset\max\left(\tfrac{X}{XL}\right)
\]
This condition fails from time to time (we provide a simple example below).\\
In case this condition is met we obtain \textbf{as smallest solution}
\begin{TIKZCD}[column sep=1.5cm]
  \PIMSNER(K,0) \dar\arrow[rr]&[-0.5cm]& \PIMSNER(K,I)
  \arrow[dl,"{\times}"description,dashed]
  \arrow[d,"{\checkmark}"description]
  \arrow[dr,"{\checkmark}"description]
  \arrow[drr,"{\checkmark}"description]
  \rar&\ldots \\
  \PIMSNER(L,0) \rar&\ldots\mathstrut\rar& \PIMSNER(L,J_{\min}) \rar&\ldots\mathstrut\rar& \PIMSNER(L,\max)
\end{TIKZCD}
while in case the condition fails then there simply is \textbf{no connecting morphism.}\linebreak
As such we also obtain the lattice for any relative Cuntz-Pimsner algebra
which is really \textbf{just our main result restated} (while this also generalizes O-pairs):

%%%%%%%%%%%%%%%%%%%%%%%%%%%%%%%%%%%
%%  COROLLARY: RELATIVE LATTICE  %%
%%%%%%%%%%%%%%%%%%%%%%%%%%%%%%%%%%%
\begin{BREAKCOR}[Relative Cuntz-Pimsner algebra: gauge-invariant ideals]
  Consider a relative Cuntz-Pimsner algebra (as described in section \ref{sec:RELATIVE-PIMSNER})
  \begin{TIKZCD}
    (X,A)\rar&\left(\tfrac{X}{XK},\tfrac{A}{K}\right) \rar&\PIMSNER(K,0)\rar&\PIMSNER(K,I)
  \end{TIKZCD}
  for some kernel--covariance pair as in \eqref{COVARIANCE-IDEAL:QUOTIENT-PULLBACK} above.\\
  % \[
  %   \Bigl(~K\ILEQ A:~X^*KX\subset K~\Big|~I\ILEQ A/K:~I\subset\max\left(\tfrac{X}{XK}\right)~\Bigr).
  % \]
  Then its lattice of gauge-invariant ideals simply runs over pairs as in \eqref{LATTICE-ISOMORPHISM}
  \[
  \Bigl(~K\subset L~\Big|~I+K\subset J+L~\Bigr)
  \iff
  \Bigl(~K\subset L~\Big|~J_{\min}\subset J\subset\max\left(\tfrac{X}{XL}\right)~\Bigr)
  \]
  or in words simply over all larger kernel--covariance pairs.
\end{BREAKCOR}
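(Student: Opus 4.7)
The plan is a direct application of the order isomorphism \eqref{LATTICE-ISOMORPHISM} established as the main theorem of this section; everything else is unravelling definitions.

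First I would note that gauge-invariant ideals of $\PIMSNER(K,I)$ correspond bijectively to its gauge-equivariant quotients, since any quotient by a gauge-invariant ideal automatically inherits the circle action. Each such quotient is itself a gauge-equivariant representation of the original correspondence $(X,A)$, so by the classification assembled in the preceding sections---the gauge-invariant uniqueness theorem \ref{UNIQUENESS-THEOREM} together with theorem \ref{THEOREM:RELATIVE-PIMSNER:KERNEL-COVARIANCE}---it arises uniquely as $\PIMSNER(L,J)$ for some kernel-covariance pair $(L,J)$. A quotient morphism $\PIMSNER(K,I)\to\PIMSNER(L,J)$ exists precisely when $\PIMSNER(K,I)\leq\PIMSNER(L,J)$, so the task reduces to describing this upward interval in the lattice of kernel-covariance pairs.

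Second, I would apply the order isomorphism \eqref{LATTICE-ISOMORPHISM} directly: $\PIMSNER(K,I)\leq\PIMSNER(L,J)$ holds if and only if $K\subset L$ and $I+K\subset J+L$. This is already the first equivalent form stated in the corollary, so it remains only to translate the covariance comparison into an intrinsic bound on $J$ alone. For this I would write $J_{\min}$ for the image of $I$ under the induced surjection $A/K\to A/L$. Since both $I+K$ and $J+L$ are the pullbacks to $A$ of the ideals $I$ (of $A/K$) and $J$ (of $A/L$) respectively, the inclusion $I+K\subset J+L$ inside $A$ is equivalent to $J_{\min}\subset J$ inside $A/L$. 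Combined with the admissibility condition $J\subset\max(X/XL)$ from the intrinsic definition of a kernel-covariance pair in \eqref{COVARIANCE-IDEAL:QUOTIENT-PULLBACK}, this yields the second form $J_{\min}\subset J\subset\max(X/XL)$.

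The only conceptual obstacle---not a technical step in the proof, but worth flagging---is the question of whether $J_{\min}$ itself satisfies $J_{\min}\subset\max(X/XL)$ for a given $L\supset K$. When this fails no admissible $J$ exists and the entire quotient strand over $L$ admits no connecting morphism from $\PIMSNER(K,I)$; this is precisely the obstruction already discussed in the paragraphs preceding the corollary. Whenever $J_{\min}\subset\max(X/XL)$ does hold, the admissible covariances fill out the entire interval $[J_{\min},\max(X/XL)]$, and the corollary follows immediately.
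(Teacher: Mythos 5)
Your proposal is correct and follows essentially the same route as the paper: the corollary is presented there as a direct restatement of the order isomorphism \eqref{LATTICE-ISOMORPHISM}, with the identification of gauge-invariant ideals as gauge-equivariant quotients, the translation of $I+K\subset J+L$ into $J_{\min}\subset J$ via pullback along $A\to A/L$, and the obstruction $J_{\min}\subset\max\left(\tfrac{X}{XL}\right)$ all carried out in the paragraphs immediately preceding the corollary exactly as you do. Nothing is missing.
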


Let us give an example for when there is no connecting morphism:
\begin{BREAKEXM}[Graph correspondences: no connecting morphism]
  \label{EXAMPLE:JOIN-KERNEL-COVARIANCE-PAIRS}
  Consider as an example the following graph and its quotient graphs
  \vspace*{-\baselineskip/2}
  \[\begin{tikzcd}[column sep=1.5cm]
    \begin{gathered}
      K=0:\\
      \Bigl(~~a\to b~~\Bigr)\\[\baselineskip]
    \end{gathered}
    \rar&
    \begin{gathered}
      K=(a):\\
      \Bigl(~~b~~\Bigr)\\[\baselineskip]
    \end{gathered}
    \rar&
    \begin{gathered}
      K=(a\cup b):\\
      \Bigl(~~\emptyset~~\Bigr).\\[\baselineskip]
    \end{gathered}
  \end{tikzcd}\vspace*{-1.5\baselineskip}\]
  Then there is no connecting morphism for the absolute Cuntz-Pimsner algebras between the first and second quotient (by simply reading off covariance ideals):
  \begin{TIKZCD}[column sep=normal,row sep=1.5cm,arrows={crossing over}]
    \TOEPLITZ\Bigl(~ X=\ell^2(a\to b) ~\Bigr) \dar\rar &[0.5cm] \TOEPLITZ\Bigl(~ X=\ell^2(b) ~\Bigr) \dar[equal]\rar& \TOEPLITZ(X=0)\dar[equal] \\
    \PIMSNER\Bigl(~ X=\ell^2(a\to b) ~\Bigr)
    \arrow[urr]\arrow[r,"{\times}"description,dashed]
    &\PIMSNER\Bigl(~ X=\ell^2(b) ~\Bigr) & \PIMSNER(X=0).
  \end{TIKZCD}
  Indeed the obstruction fails for the covariance ideal:
  \begin{gather*}
    I_1=(b)=\reg\bigl(~a\to b~\bigr)\TAB\text{and}\TAB K_2=(a)\subset\her(a\to b):\\[2\jot]
    (A\to A/K_2) I_1=(b)\nsubseteq \reg\bigl(~\mathrm{quotient~graph}=b~\bigr).
  \end{gather*}
  The issue here is that the hereditary ideal is simply not saturated.
  Alternatively one may note that the first defines the simple algebra of $2{\times}2$ matrices.\\
  In particular, we obtain for the \textbf{join of kernel--covariance pairs}
  \[
    \Bigl(~K_1=0~\Big|~I_1=(b)~\Bigr) \vee
    \Bigl(~K_2=(a)~\Big|~I_2=(a)~\Bigr)
    = \Bigl(~K=(a\cup b)~\Big|~\ldots~\Bigr).
  \]
  In other words, the join as \textbf{next possible kernel--covariance pair} lies only beyond of just the sum of kernel and covariance ideals.
  So we found an example for the issue (mentioned further above) that suprema and infima will be generally \textbf{beyond just intersections and sums of ideals.}
\end{BREAKEXM}

%%%%%%%%%%%%%%%%%%%%%%%%%%%%%%%%%%
%%  TRANSLATION: KATSURA PAIRS  %%
%%%%%%%%%%%%%%%%%%%%%%%%%%%%%%%%%%
We finish this section with a widely missed identification of Katsura's work:
That is we clarify how \textbf{Katsura's T-pairs (and O-pairs)} are nothing but the pullback version of our kernel--covariance pairs from above.
More precisely, we elaborate \textbf{Katsura's cryptic requirement}
\[
  J(K):=\Bigl\{~a\in A~\Big|~\ldots~\text{and}~ aX\inv(K)\subset K~\Bigr\}:\TAB\TAB K\subset I\subset J(K)
\]
and how this defines a \textbf{translation of the constraint} from proposition \ref{FAITHFUL-REPRESENTATIONS:MAXIMAL-COVARIANCE}:\\
That is any covariance for an embedding into an operator algebra is necessarily orthogonal to the kernel (for its left action) which read in our case
\[
  \cov\left(~\tfrac{X}{XK}\to B~\right)\perp\ker\left(~\tfrac{A}{K}\act \tfrac{X}{XK}~\right)
\]
and as such these cannot exceed the maximal covariance (a.k.a. Katsura's ideal)
\[
  \cov\left(~\tfrac{X}{XK}\to B~\right)\subset
  \left[\left(\tfrac{X}{XK}\right)\left(\tfrac{X}{XK}\right)^*\cap
  \ker\left(\tfrac{A}{K}\act\tfrac{X}{XK}\right)^\perp\right]=
  \max\left(\tfrac{X}{XK}\right).
\]
As such we found instead our kernel--covariance pairs as given \textbf{by invariant ideals as kernel} (which defines some sort of discrete range for kernel ideals)
\[
  K\ILEQ A:\TAB\TAB X^*KX\subset K
\]
together with \textbf{ideals bounded from above} as covariance (which defines an upper bound on the range of covariance ideals)
\[
  I\ILEQ A/K:\TAB\TAB 0\subset I\subset\max\left(\tfrac{X}{XK}\right)
\]
while we found in the \textbf{second half of our classification} that each such kernel--covariance pair indeed arises itself (more precisely theorem~\ref{THEOREM:RELATIVE-PIMSNER:KERNEL-COVARIANCE}).\linebreak
In order to establish Katsura's requirement we first note the obvious
\[
  I=\Bigl(A\to A/K\Bigr)\inv J
  \TAB\iff\TAB
  K\subset I
\]
and one the other hand the inclusion (for quotient maps)
\[
  J\subset \max\left(\tfrac{X}{XK}\right)
  \TAB\iff\TAB
  \Bigl(A\to A/K\Bigr)\inv J\subset\Bigl(A\to A/K\Bigr)\inv\max\left(\tfrac{X}{XK}\right).
\]
As such \textbf{Katsura's condition} simply states (see \cite[lemma~5.2]{KATSURA-GAUGE-IDEALS})
\[
  J(K)=(A\to A/K)\inv \max\left(\tfrac{X}{XK}\right)
\]
for which it further suffices to verify (see also \cite[lemma~5.2]{KATSURA-GAUGE-IDEALS})
\[
  \Bigl\{~aX\inv(K)\subset K~\Bigr\}=\Bigl(A\to A/K\Bigr)\inv\ker\left(\tfrac{A}{K}\act\tfrac{X}{XK}\right)^\perp
\]
since the dotted condition represents nothing but compactly acting coefficients.
This is however now easily verified:
Consider for this the pullback (which contains the same information)
\begin{gather*}
  \ker\left(A\act\tfrac{X}{XK}\right) =
  \Bigl(A\to A/K\Bigr)\inv\ker\left(\tfrac{A}{K}\act\tfrac{X}{XK}\right):\\[3\jot]
  \Bigl(A\to A/K\Bigr)\Bigl(A\to A/K\Bigr)\inv(~\ldots~)=(~\ldots~)
\end{gather*}
and which further reads (as in proposition \ref{SHORT-EXACT:ADJOINTABLES})
\[
  \ker\left(A\act\tfrac{X}{XK}\right) = \left\{~a\tfrac{X}{XK}=0~\right\} = \{aX\subset XK\} = \{X^* aX\subset K\} = X\inv(K).
\]
Put together we obtain the desired relation for the orthogonal complement.\\
We note that the relation has been worked out by Katsura in \cite[lemma~5.2]{KATSURA-GAUGE-IDEALS}\linebreak
which however has been not continued further on: Katsura chose to work with the cryptic requirement instead of pursuing their \mbox{kernel--covariance} counterpart.
Possibly because they got \textbf{only partially recognized} as covariance ideals.

Finally the author notes that the results here arose from a more detailed study of \cite{KATSURA-GAUGE-IDEALS} which builds on \cite{FOWLER-MUHLY-RAEBURN} and further \cite{KAJIWARA-PINZARI-WATATANI} and \cite{PIMSNER-1997}.
More precisely, the author realized the relations drawn in \cite[lemma~5.10]{KATSURA-GAUGE-IDEALS} (which extend \cite[lemma~2.9]{FOWLER-MUHLY-RAEBURN})
as a partial result on \textbf{categorical kernel} and cokernel morphisms
which led to \textbf{their intrinsic characterization}
(in theorem \ref{THEOREM:KERNEL-COKERNEL-MORPHISMS})
and so also on the \textbf{range of possible kernel ideals.}\\
On the other hand the author realized the first observation made in \cite[proposition~3.3]{KATSURA-SEMINAL} as an intrinsic characterisation on the range of \textbf{possible covariance ideals} for the induced representation on the quotient.\linebreak
These let the author to \textbf{systematically employ} such kernel--covariance pairs,\linebreak
which allowed on one hand to handle \textbf{the general version} of the gauge-invariant uniqueness-theorem \textbf{by reduction to the faithful case} which follows from the \textbf{sleek and simplifying proof} by Evgenios Kakariadis in \cite{KAKARIADIS-GAUGE-THEOREM} (which draws from the second observation made in \cite[proposition~3.3]{KATSURA-SEMINAL}) and on the other hand the \textbf{critical observation} made by Takeshi Katsura in \cite[lemma~4.7]{KATSURA-SEMINAL} in his seminal paper from 2004,
which led the author to \textbf{retrieve kernel--covariance pairs}
from their relative Cuntz-Pimsner algebra (in theorem \ref{THEOREM:RELATIVE-PIMSNER:KERNEL-COVARIANCE}).
\textbf{The main difference} however is that we didn't need to build any ad-hoc semi-kind-of categorical pushout for correspondences as was handled in \cite{KATSURA-GAUGE-IDEALS}. Instead it is all based on \textbf{the simple idea of reduction} to faithful representations using \textbf{kernel and cokernel morphisms.}

%%%%%%%%%%%%%%%%%%%%%%%%%
%%  PIMSNER DILATIONS  %%
%%%%%%%%%%%%%%%%%%%%%%%%%
\section{Pimsner dilations}
\label{sec:PIMSNER-DILATIONS}

This final section introduces the notion of dilations and verifies the existence of the maximal dilation \textbf{as Hilbert bimodule.}
We further reveal Katsura's construction as a \textbf{particular nonmaximal dilation} and illustrate the lack of minimal dilations.
Meanwhile, the author would like to take this opportunity \textbf{to thank Ralf Meyer} for sharing his enlightening perspective on the Pimsner dilation \textbf{as maximal dilation.}

We begin with the concept of dilations:
More precisely that is any \textbf{gauge-equivariant} factorisation over some \textbf{intermediate correspondence} such as
\begin{TIKZCD}[column sep=large]
  & (Y,B)\arrow[dr,dashed] \\
  (X,A)\arrow[ur,dashed]\arrow[rr]&&\PIMSNER(K,I)
\end{TIKZCD}
where the gauge-equivariance boils down to simply
\[
  \begin{smalltikzcd}
    Y\rar&\PIMSNER(K,I)(1)
  \end{smalltikzcd}
  \TAB\text{and}\TAB
  \begin{smalltikzcd}
    B\rar&\PIMSNER(K,I)(0).
  \end{smalltikzcd}
\]
As the original correspondence generates the relative Cuntz--Pimsner algebra, so does also the intermediate one
\[
  \CSTAR(X\cup A)=\PIMSNER(K,I)\TAB\implies\TAB\CSTAR(Y\cup B)=\PIMSNER(K,I)
\]
whence the factorisation defines a relative Cuntz--Pimsner algebra itself:
\[
  \begin{tikzcd}
    (Y,B)\rar&\PIMSNER(K,I)=\PIMSNER\Bigl(Y,B\Big|~L=?~J=?~\Bigr)
  \end{tikzcd}
\]
As such the task is now to find dilations which generate the relative Cuntz--Pimsner algebra as an \textbf{absolute Cuntz--Pimsner algebra:}
\begin{TIKZCD}[column sep=0.7cm]
  (X,A) \rar[dashed]& \Bigl(~Y=?,B=?~\Bigr) \rar[dashed]& \PIMSNER(K,I)=\PIMSNER\Bigl(Y,B~\Big|~L=0,J=\max~\Bigr)~?
\end{TIKZCD}
As the kernel should be trivial we have no choice than to look within the relative Cuntz--Pimsner algebra itself. Also we may as usual assume that our original correspondence embeds itself (simply by replacing our original correspondence by its quotient correspondence).
As such our intermediate correspondence necessarily arises \textbf{as an intermediate subspace}
\[
  X\subset Y\subset \PIMSNER(K=0,I)(1)\TAB\text{and}\TAB A\subset B\subset\PIMSNER(K=0,I)(0).
\]
On the other hand we found (as a well-known description) that the absolute Cuntz--Pimsner algebra arises as the smallest gauge-equivariant quotient for which the coefficient algebra faithfully embeds into (and so also the correspondence) or in other words the coefficient algebra detects the gauge-invariant ideals within the absolute Cuntz--Pimsner algebra. So we aim to find an \textbf{intermediate subspace which detects} the remaining gauge-invariant ideals
\[
  J\ILEQ\PIMSNER(K=0,I):\TAB\TAB B\cap J=0\implies J=0~?
\]
By our main result (theorem \ref{THEOREM:RELATIVE-PIMSNER:KERNEL-COVARIANCE}) we found a parametrisation for the entire lattice of gauge-equivariant representations
\begin{TIKZCD}
  \PIMSNER(K=0,I=0)\dar\rar       & \PIMSNER(K\neq0,I=0)  \dar\rar         & \mathstrut\ldots\mathstrut \dar\\
  \PIMSNER(K=0,I)\dar\rar[dashed] & \mathstrut\ldots\mathstrut    \dar\rar[dashed] & \mathstrut\ldots\mathstrut \dar\\
  \PIMSNER(K=0,\max)\rar[dashed]  & \mathstrut\ldots\mathstrut \rar[dashed]     & \mathstrut\ldots\mathstrut
\end{TIKZCD}
given by kernel--covariance pairs
\[
  \Bigl(~~K\ILEQ A:~X^*KX\subset K~~\Big|~~I\ILEQ\tfrac{A}{K}:~I\subset\max\left(\tfrac{X}{XK}\right)~~\Bigr).
\]
and so also of gauge-invariant ideals (within the Toeplitz algebra) whence also for the relative Cuntz-Pimsner algebra.
Our original coefficient algebra however already detects the kernel component:
\[
  A\cap \TOEPLITZ(X,A)(K\subset A)\TOEPLITZ(X,A)=0\TAB\implies\TAB K=0\TAB\checkmark
\]
As such the only gauge-invariant ideals which our original coefficient algebra cannot detect are precisely the covariance ideals (with trivial kernel component)
\[
  I\subset\max(X,A):\TAB\TAB A\cap\TOEPLITZ(X,A)\mediumMATRIX{I\\&0}\TOEPLITZ(X,A)=0.
\]
Furthermore we have already taken a quotient for some covariance (which brought us to our relative Cuntz--Pimsner algebra) and so the remaining gauge-invariant ideals arise as remaining quotients
\begin{TIKZCD}
  \PIMSNER(K=0;I=0)\rar&\PIMSNER(K=0,I)\rar&~\ldots~\rar&\PIMSNER(K=0,\max).
\end{TIKZCD}
As such we need to find an intermediate coefficient algebra which detects all of the remaining covariance ideals beyond the given one $I\subset J\subset\max(X,A)$:
\begin{equation}\label{DILATIONS:COVARIANCE-DETECTION}
  A\subset B\subset\PIMSNER(K,I)(0):\TAB\TAB B\cap\PIMSNER(K,I)\mediumMATRIX{J\\&0}\PIMSNER(K,I)\neq0
\end{equation}
with an intermediate subspace as correspondence (as described in section \ref{sec:CORRESPONDENCES})
\[
  X\subset Y\subset\PIMSNER(K,I)(1):\TAB\TAB Y^*Y\subset B,\TAB BY\subset Y,\TAB YB\subset Y.
\]
As such one may first choose an intermediate subalgebra which detects the remaining covariance ideals (and if desired also a chosen subspace) and from there simply enlarge the chosen subalgebra to form a correspondence, for instance as the smallest correspondence above:
\[
  \bigcap\Bigl\{~\big(Y_0\subset Y\big|B_0\subset B\big)~\Big|~Y^*Y\subset B,~BY\subset Y,~YB\subset Y~\Bigr\}.
\]
Indeed the intersection of any class of correspondences forms a correspondence:
\[
  \hspace{-1cm}\left(~Y=\bigcap Y_n~\Big|~ B=\bigcap B_n~\right):\TAB\TAB Y_n^*Y_n\subset B_n,~\ldots~\implies~Y^*Y\subset B,~\ldots
\]
We meanwhile need to verify that there always exists one above:
For this we simply consider the maximal dilation \textbf{(also known as Pimsner dilation)}
\[
  \Bigl(~Y=\PIMSNER(K,I)(1)~\Big|~B=\PIMSNER(K,I)(0)~\Bigr).
\]
Indeed the maximal dilation defines \textbf{even a Hilbert bimodule} and so also a correspondence simply as Fourier spaces define Fell bundles (confer section \ref{sec:GAUGE-ACTIONS}):
\begin{align*}
  \PIMSNER(K,I)(-1)\PIMSNER(K,I)(1)&\subset\PIMSNER(K,I)(-1+1=0),\\
  \PIMSNER(K,I)(0)\PIMSNER(K,I)(1) &\subset\PIMSNER(K,I)(0+1=1),\TAB\ldots
\end{align*}
On the other hand the maximal dilation is also easily seen to detect all of the remaining covariance simply as each is generated from the fixed point algebra:
\begin{equation}\label{MAXIMAL-DILATION:COVARIANCE-DETECTION}
  \begin{gathered}
    \mediumMATRIX{J\\&0}\subset A\mediumMATRIX{1\\&1} + XX^*\mediumMATRIX{0\\&1}\subset \PIMSNER(K,I)(0)\\[2\jot]
    \hspace{-1cm}\implies\TAB
    \mediumMATRIX{J\\&0}\subset\PIMSNER(K,I)(0)\cap\biggl[~\PIMSNER(K,I)\mediumMATRIX{J\\&0}\PIMSNER(K,I)~\biggr]
  \end{gathered}
\end{equation}
\textbf{In fact one may argue more generally:}
Given any operator algebra with a given circle action and consider its fixed point algebra (as in section \ref{sec:GAUGE-ACTIONS})
\[
  \TORUS\act B:\TAB\TAB B(n=0)=\Bigl\{~b~\Big|~\bigl(b(z):=z\act b~\bigr)\equiv b~\Bigr\}.
\]
Then its fixed point algebra detects every gauge-invariant subalgebra:
\[
  \Bigl(~A=\TORUS\act A~\Bigr)\subset B:\TAB\TAB A\cap B(0)=0\implies A=0.
\]
Indeed this follows using the conditional expectation (confer section \ref{sec:GAUGE-ACTIONS})
\[
  \TORUS\act B:\TAB E(b)=\int_\TORUS \Big(b(z)=z\act b\Big)\dd z.
\]
As the subalgebra is gauge-invariant the conditional expectation does not leave the subalgebra (using its construction as Bochner integral)
\[
  \Bigl(~A=\TORUS\act A~\Bigr) \TAB\implies\TAB
  E(A) = \int_\TORUS \Bigl(~z\act A\subset A~\Bigr)\dd z \subset A.
\]
On the other hand, every operator algebra is spanned by its positive portion,
\[
  A=\pos(A) - \pos(A) + i\pos(A) - i\pos(A),\TAB\TAB\pos(A):=\{0\leq a\in A\}.\hspace{-1cm}
\]
The conditional expectation (given by averaging) is however faithful on the positive portion and as such we have have found the detection
\[
  E(\pos A)\subset A\cap B(0)=0
  \TAB\implies\TAB
  \pos(A)=0
  \TAB\implies\TAB
  A=0.
\]
This well-known technique is \textbf{quite worthwhile in other context.}\\
As such we have found the following familiar result with ease
(note there was basically nothing left to prove anymore):

%%%%%%%%%%%%%%%%%%%%%%%%%%%%%%%%%
%%  THEOREM: MAXIMAL DILATION  %%
%%%%%%%%%%%%%%%%%%%%%%%%%%%%%%%%%
\begin{BREAKTHM}[Maximal dilation: absolute Cuntz--Pimsner algebra]
  The maximal dilation realises relative Cuntz--Pimsner algebras as absolute one
  \begin{TIKZCD}[column sep=large]
    \PIMSNER(K,I)=\PIMSNER\Bigl(~Y=\PIMSNER(K,I)(1)~\Big|~ B=\PIMSNER(K,I)(0)~\Bigr).
  \end{TIKZCD}
  and further defines the maximal Hilbert bimodule.
\end{BREAKTHM}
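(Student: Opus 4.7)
The plan is to invoke the kernel--covariance classification (theorem~\ref{THEOREM:RELATIVE-PIMSNER:KERNEL-COVARIANCE}) for the intermediate pair $(Y,B):=(\PIMSNER(K,I)(1),\PIMSNER(K,I)(0))$ viewed as an embedding into $\PIMSNER(K,I)$. First, the semi--saturated Fell bundle structure recalled in section~\ref{sec:GAUGE-ACTIONS} gives $\PIMSNER(K,I)(n\geq1)=Y^nB$ (and conjugates for $n\leq-1$), so $\CSTAR(Y\cup B)=\PIMSNER(K,I)$. Next, $(Y,B)$ is already a Hilbert bimodule: the Fell bundle identity $\PIMSNER(K,I)(1)\PIMSNER(K,I)(-1)\subset\PIMSNER(K,I)(0)$ gives $YY^*\subset B$, and the containment $\ketbra{Y}{Y}\subset\ker(B\act Y)^\perp$ is automatic since any $b\in B$ with $bY=0$ satisfies $b\cdot YY^*=(bY)Y^*=0$ in the ambient algebra (using that the kernel is a self--adjoint ideal, proposition~\ref{MAXIMAL-FAITHFUL-ACTION}). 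The intrinsic characterisation theorem~\ref{HILBERT-BIMODULES} thus upgrades $(Y,B)$ to a Hilbert bimodule.

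For the covariance, the pullback description of proposition~\ref{COVARIANCE:SUBCORRESPONDENCES} combined with $YY^*\subset B$ yields $\cov(Y\to\PIMSNER(K,I))=B\cap YY^*=YY^*$. Katsura's bound from proposition~\ref{FAITHFUL-REPRESENTATIONS:MAXIMAL-COVARIANCE} forces $\cov\subset\max(Y,B)\subset YY^*=\cov$, hence $\cov=\max(Y,B)$. The kernel on $B$ is trivial since $B=\PIMSNER(K,I)(0)\subset\PIMSNER(K,I)$ by construction, so theorem~\ref{THEOREM:RELATIVE-PIMSNER:KERNEL-COVARIANCE} concludes $\PIMSNER(K,I)=\PIMSNER(Y,B\mid 0,\max)$, realising the relative Cuntz--Pimsner algebra as the absolute Cuntz--Pimsner algebra of the Hilbert bimodule $(Y,B)$.

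Maximality among Hilbert bimodule dilations is automatic from the Fourier perspective: any gauge--equivariant intermediate correspondence $(X,A)\subset(Y',B')\subset\PIMSNER(K,I)$ must live in the appropriate Fourier degrees, so $Y'\subset\PIMSNER(K,I)(1)=Y$ and $B'\subset\PIMSNER(K,I)(0)=B$, i.e.~$(Y',B')\subset(Y,B)$. The one genuinely new input is the Fell bundle identification $YY^*\subset B$ that grants the Hilbert bimodule property; everything else is assembly of the previously established classification and the intrinsic bimodule characterisation, echoing the author's remark that there was basically nothing left to prove.
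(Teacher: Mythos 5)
Your proposal is correct, but it follows a genuinely different route from the paper's. The paper first reduces the problem to a detection criterion: by the classification, the only gauge-invariant ideals of $\PIMSNER(K,I)$ that the original coefficient algebra cannot detect are the remaining covariance ideals $I\subset J\subset\max$, so a dilation realises $\PIMSNER(K,I)$ as an absolute Cuntz--Pimsner algebra precisely when its coefficient algebra detects all of these (this is \eqref{DILATIONS:COVARIANCE-DETECTION}); the theorem is then a one-liner because each such $J$ already lies inside the fixed point algebra $\PIMSNER(K,I)(0)$ (this is \eqref{MAXIMAL-DILATION:COVARIANCE-DETECTION}), backed up by the conditional-expectation argument that a fixed point algebra detects every gauge-invariant subalgebra. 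You instead compute the kernel--covariance pair of the Fourier-space correspondence directly: the Fell bundle inclusion $YY^*\subset B$ together with self-adjointness of $\ker(B\act Y)$ yields the Hilbert bimodule property via theorem \ref{HILBERT-BIMODULES}, the intersection description of the covariance for embeddings gives $\cov\bigl(Y\to\PIMSNER(K,I)\bigr)=YY^*=\max(Y,B)$, semi-saturation gives $\CSTAR(Y\cup B)=\PIMSNER(K,I)$, and the uniqueness theorem closes the argument. Your route buys the explicit identification $\cov=\max(Y,B)=YY^*$ (which simultaneously explains why the maximal dilation is a Hilbert bimodule with full Katsura ideal), whereas the paper's route buys a reusable detection criterion that it applies again to the smaller Katsura dilation in theorem \ref{THEOREM:KATSURA-DILATION=COVARIANCE-DETECTION}. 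One citation should be fixed: the concluding step --- a gauge-equivariant representation whose kernel--covariance pair is $(0,\max)$ coincides with the absolute Cuntz--Pimsner algebra on its range --- is theorem \ref{UNIQUENESS-THEOREM}, not theorem \ref{THEOREM:RELATIVE-PIMSNER:KERNEL-COVARIANCE}; the latter runs in the opposite direction (it recovers the pair from a given relative Cuntz--Pimsner algebra) and is not the statement you invoke there.
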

\begin{proof}
  This is now an immediate consequence from \eqref{MAXIMAL-DILATION:COVARIANCE-DETECTION} satisfying \eqref{DILATIONS:COVARIANCE-DETECTION}.
\end{proof}

This dilation however is rather large in the sense that there is not much control over its behavior (besides its universal description). For instance one may think of the maximal dilation similar to maximal Furstenberg boundary.
Instead we therefore seek for some dilation small enough \textbf{to be tractable combinatorially} while large enough to detect covariance.
As explained above, one may for this simply begin with a small subalgebra which detects covariance and simply enlarge the chosen subalgebra to form a correspondence.
In practice one may for instance \textbf{attempt to run the algorithm}
\begin{equation}
  \label{DILATION:ALGORITHM}
  \begin{gathered}
    Y= BX + X+ XB +BXB
    \TAB\implies\TAB
    B'= B + Y^*Y\\[3\jot]
    \implies\TAB
    Y'= B'Y + Y+ YB'
    \TAB\implies\TAB\ldots\hspace{1cm}
  \end{gathered}
\end{equation}
with implicit closed linear span as usual.\\
While there always exist a smallest correspondence above (as we found above) \textbf{this process may never halt} and whence leave us clueless about its combinatorial behavior.
\textbf{In good cases however,} the algorithm halts and thus allows for its combinatorial description.
This happens in particular for the canonical subalgebra \textbf{given by the maximal covariance itself:}
\begin{equation}
  \label{KATSURA-DILATION:COVARIANCE-DETECTION}
  A\subset\biggl[~B=A+\mediumMATRIX{\max(X,A)\\&0}~\biggr]\subset \PIMSNER(K,I)(0)
\end{equation}
whose sum defines a subalgebra since
\[
  \mediumMATRIX{\max(X,A)\\&0}A\mediumMATRIX{1\\&1}\subset\mediumMATRIX{\max(X,A)A\\&0}\subset \mediumMATRIX{\max(X,A)\\&0}
\]
which is nothin but the relation (from proposition \ref{HIGHER-MIXED})
\[
  (\phi-\tau)\max(X,A)\cdot\phi(A) = (\phi-\tau)\Bigl(\max(X,A)A\Bigr)\subset(\phi-\tau)\max(X,A).
\]
On the other hand its left action \textbf{keeps the space invariant}
\begin{equation}
  \label{KATSURA-DILATION:INVARIANT-SPACE}
  \mediumMATRIX{\max(X,A)\\&0}X\subset(A + XX^*)X\subset X
\end{equation}
and as such the algorithm halts right after the first round,
\[
  Y= X + X\mediumMATRIX{\max(X,A)\\&0}=XB:\TAB\TAB Y^*Y=BX^*XB\subset B
\]
and as such we got the \textbf{canonical dilation as a combinatorial object.}\\
Using the left and right shift (as in section \ref{sec:FOCK-SPACE}) we further even note
\begin{equation}
  \label{KATSURA-DILATION:LEFT-ACTION=TRIVIAL}
  \mediumMATRIX{\max(X,A)\\&0} \act X = \max(X,A)(1-RL)\cdot XR = 0
\end{equation}
which is nothing but the obvious relation (see also proposition \ref{HIGHER-MIXED})
\[
  \tau(XX^*)\tau(X)=\tau(XX^* X)\TAB\implies\TAB(\phi-\tau)\Bigl(A\cap XX^*\Bigr)\tau(X)=0.
\]
This resambles Katsura's construction from \cite{KATSURA-SEMINAL}
and so we refer to the canonical dilation given by the maximal covariance as \textbf{Katsura dilation}
(and note also here that there was basically nothing left to prove anymore):

%%%%%%%%%%%%%%%%%%%%%%%%
%%  KATSURA DILATION  %%
%%%%%%%%%%%%%%%%%%%%%%%%
\begin{BREAKTHM}[Katsura dilation: absolute Cuntz--Pimsner algebra]\label{THEOREM:KATSURA-DILATION=COVARIANCE-DETECTION}
  The canonical dilation given by the maximal covariance realises a relative Cuntz--Pimsner algebra as an absolute Cuntz--Pimsner algebra
  \[
    \PIMSNER(K=0,I)=\PIMSNER\biggl(~Y = X + X\mediumMATRIX{\max(X,A)\\&0}~\Big|~B=A+\mediumMATRIX{\max(X,A)\\&0}~\biggr)
  \]
  and the analogous dilation for kernel--covariance pairs with kernel ideal.\\
  This dilation may well fail to define a minimal dilation (detecting covariance)
  and even if minimal, it generally fails to be the only minimal dilation.
\end{BREAKTHM}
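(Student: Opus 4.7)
The plan is to assemble this from three ingredients: verifying that $(Y,B)$ forms a well-defined intermediate correspondence inside $\PIMSNER(K=0,I)$, confirming that it detects every remaining covariance $J$ with $I\subset J\subset \max(X,A)$, and then invoking the detection framework \eqref{DILATIONS:COVARIANCE-DETECTION}. The first ingredient is essentially spelled out by the computations preceding the theorem: $B$ is a subalgebra by \eqref{KATSURA-DILATION:COVARIANCE-DETECTION}, and then $Y=XB$ satisfies $YB=XB^2\subset Y$ trivially, $Y^*Y=BX^*XB\subset BAB\subset B$ using $X^*X\subset A\subset B$, and $BY\subset Y$ by the invariance $\mediumMATRIX{\max(X,A)\\&0}X\subset X$ from \eqref{KATSURA-DILATION:INVARIANT-SPACE}. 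So $(Y,B)$ is indeed an intermediate correspondence.

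For the second ingredient, any covariance $J$ with $I\subset J\subset \max(X,A)$ yields an element $\mediumMATRIX{J\\&0}$ which sits inside $B$ by construction and simultaneously inside the gauge-invariant ideal $\PIMSNER(K,I)\mediumMATRIX{J\\&0}\PIMSNER(K,I)$ that it generates. This is precisely the detection criterion \eqref{DILATIONS:COVARIANCE-DETECTION}, so the general argument via the faithful conditional expectation onto the fixed point algebra (as used for the maximal dilation theorem) immediately yields the equality $\PIMSNER(K=0,I)=\PIMSNER(Y,B)$. The extension to kernel--covariance pairs with $K\neq 0$ reduces to this case by first passing to the quotient correspondence $(X/XK,A/K)$, which renders the kernel trivial while preserving the relative Cuntz--Pimsner algebra, and then applying the same construction there.

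For the non-minimality assertion the approach is to exhibit a graph-algebra example (in the spirit of Example~\ref{EXAMPLE:JOIN-KERNEL-COVARIANCE-PAIRS}) where $I$ is strictly contained in $\max(X,A)$, so that $B$ already carries strictly more than is needed to detect the chain of covariances above $I$: one then produces a strictly smaller intermediate subalgebra still satisfying \eqref{DILATIONS:COVARIANCE-DETECTION}, for instance by restricting the second summand to the subideal generated by $I$ itself rather than all of $\max(X,A)$. For the non-uniqueness among minimal dilations, the idea is to choose two incomparable subspaces of $\mediumMATRIX{\max(X,A)\\&0}$ which each individually detect the remaining covariances above $I$ and close each under the algorithm \eqref{DILATION:ALGORITHM}, yielding two incomparable minimal dilations realising the same absolute Cuntz--Pimsner algebra.

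The main obstacle will be designing the graph-algebra counterexamples small enough to verify both non-minimality and incomparability among minimal dilations by direct inspection; the algebraic verifications, by contrast, are essentially content-free, being immediate consequences of \eqref{KATSURA-DILATION:COVARIANCE-DETECTION}, \eqref{KATSURA-DILATION:INVARIANT-SPACE}, \eqref{KATSURA-DILATION:LEFT-ACTION=TRIVIAL} together with the covariance-detection framework developed in the discussion preceding the maximal dilation theorem.
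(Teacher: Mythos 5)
Your first two ingredients are exactly the paper's route: the paper's own proof is a one-line reference to the fact that \eqref{KATSURA-DILATION:COVARIANCE-DETECTION} satisfies the detection criterion \eqref{DILATIONS:COVARIANCE-DETECTION}, with the well-definedness of $(Y,B)$ and the halting of the algorithm \eqref{DILATION:ALGORITHM} established in the displayed computations immediately preceding the theorem. Your detection argument (for $I\subsetneq J\subset\max(X,A)$ the class of $J$ in the difference copy is nonzero in $\PIMSNER(K=0,I)$ by theorem~\ref{THEOREM:RELATIVE-PIMSNER:KERNEL-COVARIANCE}, lies in $B$, and lies in the ideal it generates) and the reduction of the kernel case to the quotient correspondence are both correct and match the paper.

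The gap is in your plan for the minimality assertions. You propose graph-algebra counterexamples, but the paper proves the opposite for graphs: in the corollary \enquote{Katsura dilation: minimal dilation} the canonical dilation \emph{is} minimal for every graph correspondence, because the maximal covariance $c_0(\mathrm{regular})$ is an orthogonal sum over a discrete set and any subalgebra detecting every covariance ideal must contain each one-dimensional summand, hence all of $c_0(\mathrm{regular})$. So no graph example can witness non-minimality. Your concrete suggestion --- replacing the second summand by \enquote{the subideal generated by $I$} --- also fails: inside $\PIMSNER(K=0,I)$ the difference copy of $I$ is exactly what has been quotiented to zero, so this shrinks $B$ to $A$ itself, which by construction detects no remaining covariance ideal. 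The paper's actual counterexamples (example~\ref{EXAMPLE:MINIMAL-DILATIONS:FAILURE}) live where the maximal covariance has \emph{non-discrete} spectrum, e.g.\ the correspondence of an integer action on $C_0(\REALS)$: there $\max(X,A)=C_0(\REALS)$, the proper ideals $\ker(r)=C_0(\REALS\setminus\{r\})$ already detect every covariance ideal (so the canonical dilation is not minimal), and a decreasing chain $\ker(q_1)\supset\ker(q_1\cup q_2)\supset\ldots$ over an enumeration of $\RATIONALS$ detects covariance at every finite stage while its intersection is $0$, so no minimal dilation exists at all. If you want to keep your strategy, you must move from graphs to correspondences over connected (or at least non-discrete) spaces.
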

\begin{proof}
  This is now an immediate consequence from \eqref{KATSURA-DILATION:COVARIANCE-DETECTION} satisfying \eqref{DILATIONS:COVARIANCE-DETECTION}.\\
  We further provide examples for the failure of minimality in example \ref{EXAMPLE:MINIMAL-DILATIONS:FAILURE}.
\end{proof}

%%%%%%%%%%%%%%%%%%%%%%%%%%%%%%%%%%%
%%  KATSURA DILATION: INTRINSIC  %%
%%%%%%%%%%%%%%%%%%%%%%%%%%%%%%%%%%%
\begin{BREAKCOR}[Katsura dilation: intrinsic description]
  \label{THEOREM:KATSURA-DILATION:INTRINSIC-DESCRIPTION}
  The canonical dilation given by the maximal covariance allows the intrinsic description
  as the operator algebra freely generated by their abstract copies
  \[
    \biggl(~A=A\mediumMATRIX{1\\&1}~\biggr)
    \cup
    \biggl(~\max(X,A)/I=\mediumMATRIX{\max(X,A)\\&0}~\biggr)
  \]
  with multiplication given by
  \[
    A\mediumMATRIX{1\\&1}\cdot\mediumMATRIX{\max(X,A)\\&0}
    \subset
    \mediumMATRIX{A\max(X,A)\\&0}
    \subset
    \mediumMATRIX{\max(X,A)\\&0}
  \]
  and similarly for the correspondence itself.\\
  The analogous expression holds for kernel--covariance pairs with kernel ideal.\\
  This further recovers the particular description from \cite[definition~6.1]{KATSURA-GAUGE-IDEALS}.
\end{BREAKCOR}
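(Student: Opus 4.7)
The strategy is to exhibit a universal $\CSTAR$-algebra $\tilde B$ freely generated by commuting/compatible copies of $A$ and the quotient ideal $\max(X,A)/I$ subject to the stated cross-multiplication rule, then produce a canonical surjection $\tilde B \to B$ onto the concrete Katsura coefficient algebra of theorem~\ref{THEOREM:KATSURA-DILATION=COVARIANCE-DETECTION}, and finally argue injectivity via the gauge-invariant uniqueness theorem applied to the enlarged correspondence.

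First I would make precise what \emph{freely generated with the stated multiplication} means. Namely, $\tilde B$ is the universal $\CSTAR$-algebra generated by a $*$-homomorphism $\iota\colon A\to\tilde B$ and a $*$-homomorphism $\kappa\colon\max(X,A)\to\tilde B$ (factoring through the quotient $\max(X,A)/I$) subject to $\iota(a)\kappa(m)=\kappa(am)$ and $\kappa(m)\iota(a)=\kappa(ma)$ for all $a\in A$, $m\in\max(X,A)$. The point of the factorisation through $\max(X,A)/I$ is that on the concrete side, every $m\in I$ already coincides, as an element of $\smallMATRIX{XX^*\\&0}$, with its image under $\iota$ restricted to $I\subset A$, hence its class $\smallMATRIX{m\\&0}=0$ in $B$.

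Next I would construct the morphism $\tilde B\to B$. On the concrete side, the maps
\[
  a\mapsto a\mediumMATRIX{1\\&1},\TAB m\mapsto \mediumMATRIX{m\\&0}
\]
are clearly $*$-homomorphisms into $B\subset\PIMSNER(K,I)(0)$ and the cross-multiplication rule is exactly the computation preceding \eqref{KATSURA-DILATION:INVARIANT-SPACE}. Moreover $\kappa$ factors through $\max(X,A)/I$ precisely because $(\phi-\tau)(I)=0$ in $\PIMSNER(K,I)$ by definition of the covariance. By universality of $\tilde B$ this gives a $*$-homomorphism $\pi\colon\tilde B\to B$; surjectivity is immediate since $A$ and $\max(X,A)/I$ together generate $B=A+\smallMATRIX{\max(X,A)\\&0}$ linearly (and the sum is closed by \eqref{CLOSED-SUM:SUBALGEBRA-PLUS-IDEAL} applied to the ideal component).

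The main obstacle, and the heart of the proof, is injectivity of $\pi$. The plan is to enlarge $\pi$ to a $*$-homomorphism at the level of correspondences by adjoining $\tilde Y:=\iota(A)\cdot\tilde X\cdot \iota(A)+\kappa(\max)\cdot \tilde X$ (where $\tilde X$ is an abstract copy of $X$ inside the Toeplitz algebra over $\tilde B$) with the natural left and right actions, use \eqref{KATSURA-DILATION:INVARIANT-SPACE} and \eqref{KATSURA-DILATION:LEFT-ACTION=TRIVIAL} to verify that $(\tilde Y,\tilde B)$ defines a correspondence satisfying exactly the covariance relations of the Katsura dilation, and endow it with the standard gauge action. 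Then $(\tilde Y,\tilde B)$ represents into $\PIMSNER(K,I)$ with kernel and covariance calculated exactly from theorem~\ref{THEOREM:RELATIVE-PIMSNER:KERNEL-COVARIANCE}, so theorem~\ref{UNIQUENESS-THEOREM} applied to this gauge-equivariant representation identifies $\tilde B$ with its image $B$. The analogous statement for general kernel--covariance pairs follows verbatim by first passing to the quotient correspondence as in \eqref{REDUCTION:FAITHFUL-REPRESENTATION}, and the comparison with \cite[definition~6.1]{KATSURA-GAUGE-IDEALS} is then a routine unwinding of notation.
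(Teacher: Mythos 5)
Your setup (a universal algebra $\tilde B$ generated by the two copies subject to the absorption relations, plus a canonical surjection onto the concrete coefficient algebra) matches the paper's intent, but your injectivity argument has a genuine gap. Appealing to the gauge-invariant uniqueness theorem is circular here: theorem~\ref{UNIQUENESS-THEOREM} identifies a gauge-equivariant representation with the relative Cuntz--Pimsner algebra \emph{for its own kernel--covariance pair}, so to conclude that $\pi\colon\tilde B\to B$ is injective you would first have to know that $\ker\bigl(\tilde B\to\PIMSNER(K,I)\bigr)=0$ --- which is exactly the statement you are trying to prove. The construction of the auxiliary correspondence $(\tilde Y,\tilde B)$ does not resolve this; it only repackages the unknown kernel as the kernel component of a new kernel--covariance pair. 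Moreover, theorem~\ref{THEOREM:RELATIVE-PIMSNER:KERNEL-COVARIANCE} computes kernel and covariance for the \emph{original} correspondence $(X,A)$, not for your dilated $(\tilde Y,\tilde B)$, so it cannot be cited to calculate the pair you need.

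The paper's route is far more elementary and avoids the uniqueness theorem entirely. First, because $A\cdot\max(X,A)/I\subset\max(X,A)/I$ is absorbing, every word in the generators collapses, so the universal algebra is nothing more than the closed linear sum $A+\max(X,A)/I$ (closedness by \eqref{CLOSED-SUM:SUBALGEBRA-PLUS-IDEAL}). Faithfulness of any concrete representation of such a sum then reduces to basic linear algebra: it suffices that each summand is represented faithfully and that the two images intersect trivially. The first point is exactly theorem~\ref{THEOREM:RELATIVE-PIMSNER:KERNEL-COVARIANCE} (no added kernel, no added covariance); the second is the computation
\[
  A\mediumMATRIX{1\\&1}\cap\mediumMATRIX{\max(X,A)\\&0}
  = \mediumMATRIX{\ker(A\act X)\\&0}\cap\mediumMATRIX{\max(X,A)\\&0}=0,
\]
which holds since $\max(X,A)\subset\ker(A\act X)^\perp$. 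If you replace your uniqueness-theorem step with this trivial-intersection argument, your proof closes up and coincides with the paper's.
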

\begin{proof}
  By part of our main result (the nontrivial part of theorem \ref{THEOREM:RELATIVE-PIMSNER:KERNEL-COVARIANCE}) we found that the relative Cuntz--Pimsner algebra does not introduce additional kernel which is the faithful copy of the coefficient algebra (as a familiar result):
  \[
    \begin{smalltikzcd}
      (X,A)\rar&\PIMSNER(K=0,I):
    \end{smalltikzcd}\TAB\TAB
    A= A\mediumMATRIX{1\\&1}.
  \]
  On the other hand the relative Cuntz--Pimsner algebra also does not introduce additional covariance (asides the already given covariance) which reads
  \[
    \mediumMATRIX{A\cap XX^*\\&0}\cap\TOEPLITZ(X,A)\mediumMATRIX{I\\&0}\TOEPLITZ(X,A)=\mediumMATRIX{I\\&0}.
  \]
  and as such the added maximal covariance defines a faithful copy up to
  \[
    \max(X,A)/I = \mediumMATRIX{\max(X,A)\\&0}\subset\PIMSNER(K=0,I).
  \]
  Note that the maximal covariance absorbs the coefficient algebra and as such their sum already defines a closed and thus complete operator algebra
  (see the quick proof \eqref{CLOSED-SUM:SUBALGEBRA-PLUS-IDEAL} on the sum of an algebra and ideal from section \ref{sec:GAUGE-THEOREM}):
  \[
    A\mediumMATRIX{1\\&1} + \mediumMATRIX{\max(X,A)\\&0} = \overline{A\mediumMATRIX{1\\&1} + \mediumMATRIX{\max(X,A)\\&0}}\subset\PIMSNER(K=0,I).
  \]
  In fact this holds in any representation as also their universal:
  \begin{gather*}
    \CSTAR\Bigl(~ A\cup\max(X,A)/I~\Big|~A\max(X,A)/I\subset\max(X,A)/I~\Bigr)\\[2\jot]
     = A + \max(X,A)/I = \overline{A + \max(X,A)/I}.
  \end{gather*}
  As such any concrete representation which provides faithful disjoint copies for the coefficient algebra and the maximal covariance (mod covariance ideal) defines a faithful representation for their freely generated operator algebra:\\
  \[
    A\cap\max(X,A)/I=0\subset B\TAB\implies\TAB \CSTAR(A\cup\max(X,A)/I)\subset B
  \]
  Indeed this simply follows by some basic linear algebra.
  This holds in particular for their copy in the relative Cuntz--Pimsner algebra
  \[
    A\mediumMATRIX{1\\&1}\cap\mediumMATRIX{\max(X,A)\\&0} = \mediumMATRIX{\ker(A\act X)\\&0}\cap \mediumMATRIX{\max(X,A)\\&0} = 0
  \]
  where we have used their trivial intersection
  \[
    \ker(A\act X)\cap \max(X,A)\subset\ker(A\act X)\cap \ker(A\act X)^\perp=0
  \]
  and as such the dilation arises as universal representation
  \begin{gather*}
    \CSTAR\Bigl(~ A\cup\max(X,A)/I~\Big|~A\max(X,A)/I\subset\max(X,A)/I ~\Bigr) \\[2\jot]
    = A\mediumMATRIX{1\\&1} + \mediumMATRIX{\max(X,A)\\&0} \subset\PIMSNER(K=0,I).
  \end{gather*}
  Note this defines a quite general argument which applies also in other context.\\
  Finally Katsura's description is nothing but the isomorphism
  \[
    \CSTAR\Bigl(~A\cup M~\Big|~AM\subset M~\Bigr) = \Bigl\{~a\oplus(\im a+m)\in A\oplus (\im A+M\subset B)~\Bigr\}
  \]
  which simply enforces faithful disjoint copies for any $A\to B$:
  \[
    A = A(1\oplus1),\TAB
    A(1\oplus1)\cap (0\oplus M) = 0,\TAB
    M = 0\oplus(M\subset B).
  \]
  There is however nothing special about this choice of formal description.\\
  Instead it is the universal description as freely generated copies with one absorbing which captures its properties:
  \begin{gather*}
    \CSTAR\Bigl(~ A\cup\max(X,A)/I~\Big|~A\max(X,A)/I\subset\max(X,A)/I ~\Bigr) \\[2\jot]
    = A\mediumMATRIX{1\\&1} + \mediumMATRIX{\max(X,A)\\&0} \subset\PIMSNER(K=0,I).
  \end{gather*}
  The reader may now similarly argue for the correspondence.
\end{proof}

%%%%%%%%%%%%%%%%%%%%%%%%%%%%%%%%%%%%%%%%%%%%%%
%%  KATSURA DILATION: GRAPH CORRESPONDENCE  %%
%%%%%%%%%%%%%%%%%%%%%%%%%%%%%%%%%%%%%%%%%%%%%%
We may now easily recover the classical result that
any gauge-equivariant quotient for some (possibly relative) graph algebra \textbf{remains a graph algebra.}\linebreak
For this we first recall that any quotient correspondence (as kernel component)
arises as a quotient graph (confer example \ref{GRAPH-CORRESPONDENCES:QUOTIENT-GRAPHS})
\begin{gather*}
  \hspace{1cm}A/K=c_0(V)/c_0\Bigl(~H=\hereditary~\Bigr)=c_0\Bigl(~W=V\setminus H~\Bigr)\\[2\jot]
  X/XK=\ell^2(E)/\ell^2(EH)=\ell^2\Bigl(~F:=WE~\Bigr)
\end{gather*}
and as such we may replace the original graph by the quotient graph
\[
  X=\ell^2(E:=F),\TAB A=c_0(V:=W)\TAB\implies\TAB\PIMSNER(K=0,I).\hspace{-1cm}
\]
On the other hand recall that any covariance ideal for a graph (in our case the quotient graph)
arises simply as a regular set of vertices (confer example \ref{GRAPH-CORRESPONDENCES:MAXIMAL-COVARIANCE}):
\[
  \max(X,A)=c_0\Bigl(~\regular~\Bigr)\TAB\implies\TAB I=c_0\Bigl(~R\subset\regular~\Bigr).
\]
With this notation in mind we may now find the canonical dilation as a graph.
We note for this that as it was given \textbf{by the algorithm above} we may recover the canonical dilation as a combinatorial object \textbf{from the original data,}\linebreak
which boils down in our case to the canonical dilation arising as a graph:

\begin{BREAKCOR}[Katsura dilation: graph correspondences]
  \label{THEOREM:KATSURA-DILATION:GRAPH-CORRESPONDENCE}
  The canonical dilation given by the maximal covariance as in \eqref{KATSURA-DILATION:COVARIANCE-DETECTION}
  \[
    \biggl(~XB = X + X\mediumMATRIX{\max(X,A)\\&0}~\Big|~B=A+\mediumMATRIX{\max(X,A)\\&0}~\biggr)
  \]
  arises as the following canonical graph (with notation from above):\\
  Its coefficient algebra arises as the orthogonal sum of vertices
  \[
    W=\singular\mediumMATRIX{1\\&1} \cup \mediumMATRIX{0\\&\regular} \cup \mediumMATRIX{\regular\setminus R\\&0}
  \]
  together with the correspondence given by the graph
  \[
    EW=E\biggl(~\singular\mediumMATRIX{1\\&1} \cup \mediumMATRIX{0\\&\regular} \cup \mediumMATRIX{\regular\setminus R\\&0}~\biggr)
  \]
  and its left action given by (whence defining the range of edges)
  \[
  \mediumMATRIX{a\\&a}EW=(aE)W
    \TAB\text{and}\TAB
    \mediumMATRIX{0\\&b}EW=\mediumMATRIX{b\\&b}EW=(bE)W
  \]
  while trivially acting for the left over last summand.\\
  As such the graph reads in more classical terms
  \begin{gather*}
    W=\Bigl(~\mathrm{all~vertices}=\singular\cup\regular~\Bigr)\amalg\Bigl(~\regular\setminus R~\Bigr)\\[2\jot]
    EW = E\Bigl(~\mathrm{all~vertices}~\Bigr)\amalg E\Bigl(~\regular\setminus R~\Bigr)
  \end{gather*}
  with range and source map given by
  \begin{gather*}
    s(\BLANK\amalg\emptyset)=s(\BLANK)\amalg\emptyset
    \TAB\text{and}\TAB
    s(\emptyset\amalg\BLANK)=\emptyset\amalg s(\BLANK),\\[2\jot]
    r(\BLANK\amalg\emptyset)=r(\BLANK)\amalg\emptyset= r(\emptyset\amalg\BLANK).
  \end{gather*}
  Note that the combined summands basically recover the original graph
  whereas the last provides an additional copy to make up for the maximal covariance.\linebreak
  On the other hand the edges \textbf{all point into the original copy of vertices}.
  As such this recovers the familiar construction for graph algebras:
  Any gauge-equivariant quotient arises as a graph algebra itself.
\end{BREAKCOR}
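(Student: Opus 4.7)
The plan is to deduce the graph structure of the canonical dilation directly from the intrinsic description in corollary~\ref{THEOREM:KATSURA-DILATION:INTRINSIC-DESCRIPTION}, which presents $B$ as the $C^*$-algebra freely generated by faithful disjoint copies of $A$ and $\max(X,A)/I$ subject to the absorption relation $A \cdot (\max(X,A)/I) \subset \max(X,A)/I$. Since $A = c_0(V)$ and $\max(X,A)/I = c_0(\regular \setminus R)$ are both commutative and the absorption relation amounts to multiplication of commuting projections, the generated $C^*$-algebra $B$ itself is commutative and hence of the form $c_0(W)$ for some discrete set $W$. The task is then first to enumerate $W$ and second to identify $Y = X + X \cdot (\max(X,A)/I)$ as the $\ell^2$-space of a set of edges between vertices in $W$.

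For the spectrum $W$, each $v \in V$ contributes a projection $p_v = \mediumMATRIX{v\\&v}$ coming from $A$, and each $v \in \regular \setminus R$ contributes additionally $q_v = \mediumMATRIX{v\\&0}$ coming from $\max(X,A)/I$. The absorption relation forces $q_v \leq p_v$, while the faithful disjoint copies property forces the complementary projection $p_v - q_v = \mediumMATRIX{0\\&v}$ to remain nonzero. For $v \in R$ the projection $q_v$ vanishes in the quotient, so $p_v = \mediumMATRIX{0\\&v}$ collapses to a single projection; for $v \in \singular$ only $p_v = \mediumMATRIX{v\\&v}$ occurs. Gathering all minimal projections recovers exactly the three claimed classes $\singular\mediumMATRIX{1\\&1}$, $\mediumMATRIX{0\\&\regular}$, and $\mediumMATRIX{\regular \setminus R\\&0}$, the middle class absorbing both $p_v$ for $v \in R$ and $p_v - q_v$ for $v \in \regular \setminus R$.

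For the correspondence, the summand $X = \ell^2(E)$ already contributes the original edges with source and range among the first two classes of $W$. The new summand $X \cdot (\max(X,A)/I)$ contributes an extra copy of those edges whose graph-theoretic source lies in $\regular \setminus R$; in this extra copy the source now sits on the projections $\mediumMATRIX{\regular \setminus R\\&0}$ while the range remains in the original copy of $V$. The key fact is relation~\eqref{KATSURA-DILATION:LEFT-ACTION=TRIVIAL}, which ensures that $\mediumMATRIX{\regular \setminus R\\&0}$ annihilates $Y$ from the left, so these extra projections can appear only as sources of edges and never as ranges. Reading source and range off the resulting subspace decomposition then yields precisely the formulas in the statement.

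The main technical step is exactly this left-action verification: one must check that $\mediumMATRIX{\regular \setminus R\\&0}$ really acts trivially on both summands of $Y$. Both cases reduce to~\eqref{KATSURA-DILATION:LEFT-ACTION=TRIVIAL}, since the left action on a new edge $e \cdot m$ with $m \in \max(X,A)/I$ factors as $(m' \cdot e) \cdot m$ with $m' \cdot e = 0$ already. Once this is in place, the remaining identifications are purely combinatorial bookkeeping: orthogonal sums of projections become disjoint unions of vertex sets, each new edge pairs with its source in the extra copy $\regular \setminus R$, and every range lies in the original copy of $V$, reproducing the stated source and range maps.
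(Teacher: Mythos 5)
Your proposal is correct and follows essentially the same route as the paper: both decompose the coefficient algebra into the same orthogonal sum of minimal vertex projections (using the intrinsic description of Corollary~\ref{THEOREM:KATSURA-DILATION:INTRINSIC-DESCRIPTION}, equivalently Theorem~\ref{THEOREM:RELATIVE-PIMSNER:KERNEL-COVARIANCE}, to see that $\mediumMATRIX{R\\&0}$ vanishes while $\mediumMATRIX{\regular\setminus R\\&0}$ survives), read off the edges from the two summands of $Y=XB$, and use the trivial left action \eqref{KATSURA-DILATION:LEFT-ACTION=TRIVIAL} of the extra summand --- the paper's computation $(\phi-\tau)(c)\,\tau(E)=0$ --- to place all ranges in the original copy of the vertices. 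The only cosmetic difference is that you phrase the vertex decomposition as an enumeration of the spectrum of the commutative algebra $B$, whereas the paper manipulates the diagonal projections directly.
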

\begin{proof}In order to find the canonical dilation as a graph it suffices to recover its coefficient algebra as an \textbf{orthogonal sum of vertices} (see example \ref{GRAPH-CORRESPONDENCES:TENSOR-POWERS}).\linebreak
  In our case the coefficient algebra already reads as a sum of vertices
  \[
    A\mediumMATRIX{1\\&1} + \mediumMATRIX{\max(X,A)\\&0} = c_0\bigl(~\vertices~\bigr)\mediumMATRIX{1\\&1} + \mediumMATRIX{c_0\bigl(~\regular~\bigr)\\&0}
  \]
  which we may decompose now further into an orthogonal sum:\\
  First the singular vertices (that is the nonregular ones) are trivially disjoint from the regular ones and as such define an orthogonal summand,
  \[
    \vertices=\singular\cup\regular:\TAB\TAB\singular\mediumMATRIX{1\\&1}\perp\mediumMATRIX{\regular\\&\regular}.
  \]
  On the other hand the sum on regular vertices may be also taken as
  \[
    \regular\mediumMATRIX{1\\&1} + \mediumMATRIX{\regular\\&0}
    = \mediumMATRIX{0\\&\regular} + \mediumMATRIX{\regular\\&0}
  \]
  whose summands belong to the relative Cuntz--Pimsner algebra since
  \[
    \tau(XX^*)=\mediumMATRIX{0\\&XX^*}\TAB\text{and}\TAB (\phi-\tau)\bigl(A\cap XX^*\bigr) = \mediumMATRIX{A\cap XX^*\\&0}
  \]
  which is available only for the compactly acting portion!
  The latter summand vanishes precisely for the given covariance (see theorem \ref{THEOREM:RELATIVE-PIMSNER:KERNEL-COVARIANCE} or corollary \ref{THEOREM:KATSURA-DILATION:INTRINSIC-DESCRIPTION})
  \[
    \mediumMATRIX{\regular\setminus R\\&0}\neq0\TAB\text{and}\TAB \mediumMATRIX{R\\&0} = 0
  \]
  and as such the sum reduces to the \textbf{non-zero vertices}
  \[
    \mediumMATRIX{0\\&\regular} + \mediumMATRIX{\regular\\&0}
    = \mediumMATRIX{0\\&\regular} + \mediumMATRIX{\regular\setminus R\\&0}.
  \]
  As such the coefficient algebra for our dilation decomposes into
  \begin{gather*}
    \vertices\mediumMATRIX{1\\&1} + \mediumMATRIX{\regular\\&0}
    = \singular\mediumMATRIX{1\\&1} + \mediumMATRIX{0\\&\regular} + \mediumMATRIX{\regular\setminus R\\&0}
  \end{gather*}
  and so we have found the \textbf{vertices for our graph correspondence.}\\
  We may now simply read off the edges from our correspondence as
  \begin{gather*}
    X + X\mediumMATRIX{\max(X,A)\\&0}
    = \ell^2(E)\biggl(~\singular\mediumMATRIX{1\\&1} + \mediumMATRIX{0\\&\regular} + \mediumMATRIX{\regular\setminus R\\&0}~\biggr)
  \end{gather*}
  with the source of edges as evident.
  On the other hand its left action reads (using the induced morphism on compact operators from \ref{HIGHER-MIXED})
  \begin{gather*}
    \mediumMATRIX{a\\&a}EW=\phi(a)\tau(E)W=\tau(aE)W=(aE)W \\[2\jot]
    \mediumMATRIX{0\\&b}EW=\tau(b)\tau(E)W=\tau(bE)W=(bE)W \\[2\jot]
    \mediumMATRIX{c\\&0}EW=(\phi-\tau)c\tau(E)W=0
  \end{gather*}
  and so we have found the desired graph.
\end{proof}

We illustrate the canonical dilation for the following prominent graph:

\begin{BREAKEXM}[Katsura dilation: Toeplitz graph]
  \label{EXAMPLE:KATSURA-DILATION:GRAPH-CORRESPONDENCE}
  Consider the correspondence given by the single loop
  \[
    \left(~X=\ell^2\left(\begin{tikzpicture}[baseline=8pt,every loop/.style={ distance=5mm, out=120, in=60, -> }]
      \node (v) at (1,0) {$a$} edge [loop] node[above]{$x$} ();
    \end{tikzpicture}\right)=\COMPLEX x~\Bigg|~ A=c_0\Bigl(\vertices\Bigr)=\COMPLEX a~\right).
    % \TAB\TAB
    % \begin{gathered}
    %   \Bigl(~X=\COMPLEX x~\Big|~A=\COMPLEX a~\Bigr):\\[2\jot]
    %   ax=x=xa\TAB\text{and}\TAB x^*x=a.
    % \end{gathered}
  \]
  Its Toeplitz algebra recovers the traditional Toeplitz algebra since
  \[
    \TOEPLITZ(X,A)=\CSTAR\Bigl(~x\cup a~\Big|~x^*x=a,~ax=x~\Bigr) = \CSTAR\Bigl(~x^*x=1~\Bigr)=\TOEPLITZ
  \]
  and as such its \textbf{suggestive name as Toeplitz graph.} On the other hand its absolute Cuntz--Pimsner algebra recovers the traditional circle algebra
  \[
    \PIMSNER(X,A)=\CSTAR\Bigl(~x\cup a~\Big|~x^*x=a=xx^*~\Bigr) = \CSTAR\Bigl(~x^*x=1=xx^*~\Bigr)=\CONTINUOUS(\TORUS).
  \]
  These already define \textbf{all its gauge-equivariant representations:}\\
  Indeed there is only a single covariance ideal given by the single vertex
  \[
    \max(X,A)=c_0\Bigl(~\regular=a=\vertices~\Bigr) = A
  \]
  and no further quotient graph (except the trivial one).\\
  As such we found the entire lattice of gauge-equivariant representations.\\
  Further we may now compute \textbf{the graph for the canonical dilation:}\\
  For this we may now simply read off the graph as (confer corollary \ref{THEOREM:KATSURA-DILATION:GRAPH-CORRESPONDENCE})
  \begin{gather*}
    \Bigl(~W=(a)\amalg(a\setminus\emptyset)= a\amalg a
    ~\Big|~
    EW= (xa)\amalg (xa) = x\amalg x~\Bigr):\\[2\jot]
    \left(~~\begin{gathered}
      s(x\amalg\emptyset)=a\amalg\emptyset,~s(\emptyset\amalg x)=\emptyset\amalg a\\[2\jot]
      r(x\amalg\emptyset)=a\amalg\emptyset=s(\emptyset\amalg x)
    \end{gathered}~~\right)
    \implies
    \left(~\begin{tikzpicture}[x=3cm,baseline=13pt]
      \node (ORIGINAL-VERTEX) at (0,0) {$a\amalg\emptyset$} edge [loop] node[above]{$x\amalg\emptyset$} ();
      \node (COPY-VERTEX) at (1,0) {$\emptyset\amalg a$};
      \draw[->] (COPY-VERTEX) -- node[above]{$\emptyset\amalg x$} (ORIGINAL-VERTEX);
    \end{tikzpicture}~\right).
  \end{gather*}
  Put together we found the \textbf{realisation for the Toeplitz algebra:}
  \[
    \TOEPLITZ\left(\begin{tikzpicture}[baseline=10pt]
      \node (v) at (0,0) {$\bullet$} edge [loop] ();
    \end{tikzpicture}\right) =
    \PIMSNER\left(\begin{tikzpicture}[baseline=10pt]
      \node (ORIGINAL-VERTEX) at (0,0) {$\bullet$} edge [loop] ();
      \node (COPY-VERTEX) at (1,0) {$\bullet$} edge [loop,opacity=0] ();
      \draw[->] (COPY-VERTEX) -- (ORIGINAL-VERTEX);
    \end{tikzpicture}\right)
  \]
  The latter appears sometimes as well under the name Toeplitz graph.
\end{BREAKEXM}

We now continue with the issue about the existence of minimal dilations.
For this we begin with the following positive result for relative graph algebras:
\begin{COR}[Katsura dilation: minimal dilation]
  For graph correspondences the canonical dilation is minimal.
  That is roughly speaking, there is no smaller graph which realises the relative graph algebra as an absolute one.
\end{COR}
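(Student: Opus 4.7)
The plan is to show that for graph correspondences, any graph dilation realising $\PIMSNER(K{=}0,I)$ as an absolute Cuntz--Pimsner algebra must already contain the Katsura dilation from corollary \ref{THEOREM:KATSURA-DILATION:GRAPH-CORRESPONDENCE} as a sub--graph correspondence, which makes Katsura's the smallest graph dilation detecting covariance.

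First I would fix an arbitrary graph dilation $(Y,B)=(\ell^2(F),c_0(U))$ with the required containments inside $\PIMSNER(K{=}0,I)$. Since both $A=c_0(V)$ and $B=c_0(U)$ are commutative and generated by their minimal projections (vertices), the inclusion $A\subset B$ forces every vertex $v\in V$ to split as an orthogonal sum of vertices in $U$. After identifying such decomposable vertices back with their sums, I may assume without loss of generality that $V\subset U$, so that the new vertices form a disjoint piece $U\setminus V$ carrying extra projections in the fixed-point algebra, and the edges in $F$ extend those in $E$.

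Next I would invoke theorem \ref{THEOREM:KATSURA-DILATION=COVARIANCE-DETECTION} together with criterion \eqref{DILATIONS:COVARIANCE-DETECTION}: the subalgebra $c_0(U)$ must detect every remaining gauge-invariant ideal, in particular each singleton covariance ideal
\[
  J_v = \PIMSNER(K{=}0,I)\cdot\mediumMATRIX{v\\&0}\cdot\PIMSNER(K{=}0,I),\qquad v\in\regular\setminus R.
\]
For each such $v$, detection forces $c_0(U)\cap J_v\neq 0$, and since $c_0(U)$ is atomic commutative this intersection contains a nonzero minimal projection $u_v$. By theorem \ref{THEOREM:RELATIVE-PIMSNER:KERNEL-COVARIANCE} the original coefficient algebra $A$ intersects $J_v$ trivially (as $v\notin R$ means $\smallMATRIX{v\\&0}$ is nonzero but orthogonal to $A$), so $u_v$ must be a \emph{new} vertex in $U\setminus V$. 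Distinct $v,v'\in\regular\setminus R$ give orthogonal ideals $J_v\perp J_{v'}$ by the Katsura embedding \ref{KERNEL-RELATION:KATSURAS-EMBEDDING}, forcing $u_v\neq u_{v'}$, and the injection $v\mapsto u_v$ exhibits $|U\setminus V|\geq |\regular\setminus R|$.

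The final step is to match edges: tracing through the left action in the Fock picture, $u_v$ absorbs into $\mediumMATRIX{v\\&0}$ under the quotient, and the bimodule structure on $(Y,B)$ forces $Y$ to contain exactly one edge from $u_v$ to (the original copy of) $v$, replicating the Katsura range--source pattern from corollary \ref{THEOREM:KATSURA-DILATION:GRAPH-CORRESPONDENCE}. Thus the canonical dilation sits inside any graph dilation as a subgraph, and minimality follows.

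The main obstacle will be the first step: showing that the coefficient algebra of an arbitrary graph dilation may be normalised so that $V\subset U$ as vertex sets rather than merely as an inclusion of $c_0$--subalgebras, since a priori a vertex of the dilation could straddle several original vertices. This requires using the Hilbert bimodule structure of the dilation together with the fact that each $v\in V$ acts as a minimal central projection on $X\subset Y$ to rule out such straddling, reducing the problem to the clean combinatorial counting above.
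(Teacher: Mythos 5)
Your proposal aims at a stronger statement than the one the paper actually proves, and the step you yourself flag as \enquote{the main obstacle} is a genuine gap rather than a technicality. In the paper, minimality of the canonical dilation means minimality of the \emph{detecting subalgebra}: one takes a subalgebra $B\subset c_0(\regular)$ satisfying the detection criterion \eqref{DILATIONS:COVARIANCE-DETECTION} and shows it must be all of $c_0(\regular)$. The entire argument is two lines: for each regular vertex $r$ the covariance ideal generated by the one-dimensional summand $\COMPLEX(r)$ must be detected, so $B\cap\COMPLEX(r)\neq0$; since $\COMPLEX(r)$ is one-dimensional this forces $\COMPLEX(r)\subset B$, and summing over $r$ gives $B=c_0(\regular)$. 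You do have the germ of exactly this observation in your third paragraph (atomicity of a commutative $c_0$-algebra producing a minimal projection in each intersection), but it is buried inside machinery that the corollary does not require.

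The concrete gaps in your route are these. First, the normalisation $V\subset U$: an inclusion of coefficient algebras $c_0(V)\subset c_0(U)$ only tells you that each $v\in V$ is a finite orthogonal sum of minimal projections of $U$, and \enquote{identifying such decomposable vertices back with their sums} changes the object --- the sum $u_1+u_2$ is not a vertex of $(\ell^2(F),c_0(U))$, so after the identification you are no longer comparing graph correspondences. You acknowledge this but do not resolve it, and it is precisely where a comparison of two \emph{different} graph presentations would have to do real work. Second, the edge-matching step (\enquote{$Y$ contains exactly one edge from $u_v$ to $v$}) is asserted, not proved; a dilation is only required to detect covariance and may contain arbitrarily many additional edges, so no uniqueness of edges can be extracted from the bimodule structure alone. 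If you want the stronger claim --- that no graph dilation whatsoever is properly smaller --- you would first need to make precise what \enquote{smaller} means across distinct vertex sets; for the corollary as the paper intends it, the short atomicity argument above suffices and none of the subgraph-embedding apparatus is needed.
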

\begin{proof}
  Consider a subalgebra detecting each covariance $B\subset c_0(\regular)$:
  \[
    B\cap c_0\Bigl(S\subset\regular\Bigr)=0\TAB\implies\TAB c_0\Bigl(S\subset\regular\Bigr)=0.
  \]
  Then the subalgebra necessarily contains each summand and so also
  \[
    c_0\Bigl(S=\{r\}\Bigr)=\COMPLEX(r)\subset B
    \TAB\implies\TAB
    B={\bigoplus}_r\COMPLEX(r)=c_0\Bigl(\regular\Bigr).
  \]
  As such the maximal covariance defines a minimal dilation.
\end{proof}

The previous observation suggests now the following result for correspondences over spaces (as coefficient algebra)
and in particular for integer actions:

\begin{BREAKCOR}[Katsura dilation: correspondences over spaces]
  It holds for correspondences with abelian maximal covariance (and so also for correspondences over spaces):
  The canonical dilation given by the maximal covariance defines a minimal dilation if and only if the maximal covariance has discrete spectrum
  (resp.~the maximal covariance defines a discrete subspace).\linebreak
  Maximal covariances with totally disconnected spectra are not sufficient.
\end{BREAKCOR}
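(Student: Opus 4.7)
The plan is to identify every intermediate dilation via the intrinsic description from corollary~\ref{THEOREM:KATSURA-DILATION:INTRINSIC-DESCRIPTION}: any $A\subset B'\subset B$ forming a covariance-detecting dilation must take the form $B'=A\mediumMATRIX{1\\&1}+\mediumMATRIX{M_0\\&0}$ with subspace $Y'=X+X\mediumMATRIX{M_0\\&0}$, for some $A$-invariant closed subalgebra $M_0\subset\max(X,A)/I$. The correspondence axioms reduce to $AM_0\subset M_0$ together with the vanishing $\mediumMATRIX{M_0\\&0}\act X=0$ of~\eqref{KATSURA-DILATION:LEFT-ACTION=TRIVIAL}, both routine. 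The detection requirement~\eqref{DILATIONS:COVARIANCE-DETECTION} then becomes the purely intrinsic condition $M_0\cap J\neq 0$ for every nonzero ideal $J\ILEQ\max(X,A)/I$, and the Katsura dilation is minimal precisely when no proper such $M_0$ exists.

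For the forward direction I would assume the Gelfand spectrum $S$ of the commutative algebra $\max(X,A)/I$ is discrete. Each singleton $\{s\}\subset S$ is then open and gives a minimal nonzero ideal $\COMPLEX\chi_s\ILEQ C_0(S)$. Any detecting $M_0$ therefore contains $\chi_s$ for every $s\in S$, and by norm closure $M_0\supset C_0(S)=\max(X,A)/I$. Hence no proper subdilation detects covariance, so the Katsura dilation is minimal.

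For the converse assume some $s_0\in S$ is not isolated and set $M_0:=\{f\in C_0(S):f(s_0)=0\}$, a proper closed ideal of $C_0(S)$ and hence an $A$-invariant subalgebra. Given any nonzero ideal $J=C_0(U)$ for nonempty open $U\subset S$: if $s_0\notin U$ then $M_0\cap J=J\neq 0$; otherwise $U\setminus\{s_0\}\neq\emptyset$ since $s_0$ is not isolated, and Urysohn's lemma on the locally compact Hausdorff space $S$ yields a nonzero $f\in C_0(U)$ with $\supp f\subset U\setminus\{s_0\}$, so $f\in M_0\cap J\neq 0$. This produces a proper covariance-detecting subdilation. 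The totally disconnected witness is $S=\{0\}\cup\{1/n:n\geq 1\}\subset\REALS$, compact and totally disconnected but with $0$ not isolated, to which the preceding construction applies.

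The expected obstacle is not mathematical but notational: one must carefully articulate the correspondence check for $(Y',B')$ within the matrix description of the canonical dilation, tracking that the extra $\mediumMATRIX{M_0\\&0}$-corner is absorbed by $A$ on the left (since $AM_0\subset M_0$) and annihilated by $X$ on the right (by~\eqref{KATSURA-DILATION:LEFT-ACTION=TRIVIAL}), so that $Y'^*Y'\subset A+\mediumMATRIX{M_0\\&0}=B'$ follows from $X^*X\subset A$, $AM_0\subset M_0$, and $M_0 A M_0\subset M_0$.
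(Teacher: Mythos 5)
Your proposal is correct and follows essentially the same route as the paper: reduce minimality to the question of which closed subalgebras of the commutative algebra $\max(X,A)/I$ meet every nonzero ideal, then split on whether the spectrum has a non-isolated point, using the same witness $\ker(s_0)$ for the non-discrete direction. The only differences are cosmetic --- you argue the discrete direction directly via the characteristic functions $\chi_s$ rather than via the Stone--Weierstrass classification of proper closed subalgebras, and you supply the explicit totally disconnected witness $\{0\}\cup\{1/n\}$ that the paper leaves to the reader.
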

\begin{proof}Note that the entire problem deals with an abelian operator algebra.\linebreak
  By Stone--Weierstrass it thus suffices to focus on any ideal over some point in the spectrum and any subalgebra over some pair of points of the form
  \[
    \ker\Bigl(\omega: B\to\COMPLEX\Bigr)=\Bigl\{b(\omega)=0\Bigr\}
    \TAB\text{and}\TAB
    \mathrm{eq}\Bigl(\omega_i:B\rightrightarrows\COMPLEX\Bigr)=\Bigl\{b(\omega_1)=b(\omega_2) \Bigr\}.
  \]
  For the ideal we first observe for a pair of closed subspaces $E,F\subset X=\Gamma B$:
  \[
    \ker(E)\cap\ker(F)=\ker(E\cup F)=0\TAB\iff\TAB E\cup F = X.
  \]
  As such the only ideals some non-isolated point cannot detect are
  \[
    \ker(\omega)\cap\ker(F)=0:\TAB \ker\Bigr(F= \overline{X-\omega}=X\Bigr)=0
  \]
  whence the non-isolated point would already detect every ideal.\\
  Suppose on the other hand that the spectrum defines a discrete space.\\
  Then every subalgebra as above fails detect even two ideals:
  \[
    \mathrm{eq}\bigl(\omega_1,\omega_2\bigr)\cap \ker(X-\omega_i)=\Bigl\{b(X-\omega_i)=0=b(\omega_i)\Bigr\}=0
  \]
  The corollary now follows from combining these two observations.\\
  We leave the final statement about totally disconnected spaces to the reader.
\end{proof}

With the previous corollary in mind, we may now illustrate some negative examples on the existence of minimal dilations altogether:

\begin{EXM}[Minimal dilations: failures of existence]
  \label{EXAMPLE:MINIMAL-DILATIONS:FAILURE}
  For this we may now simply consider any integer action as our correspondence,
  \[
    \INTEGERS\act C_0(\SPACE)\TAB\implies\TAB X=C_0(\SPACE)=A.
  \]
  Note that any integer action defines a regular correspondence and as such the coefficient algebra defines the maximal covariance.
  Consider now some continuous space such as for instance the real line for which we thus obtain:\\
  a) The canonical dilation \textbf{fails to be minimal} as for instance the following ideals already detect each covariance
  \[
    C_0(\REALS-r)=\ker(r) \subset C_0(\REALS):\TAB\TAB C_0(\REALS-r)\cap I=0\TAB\implies\TAB I=0.
  \]
  b) On the other hand none of those is minimal either since furthermore detection of covariance already happens for those with discrete complement such as for
  \[
    C_0\Bigl(\REALS-\{r_1,\ldots,r_n\}\Bigr)=\ker\Bigl(r_1\cup\ldots\cup r_n\Bigr)\subset C_0(\REALS).
  \]
  Consider now some enumeration of rational numbers $\{q_1,q_2,\ldots\}=\RATIONALS$.\\
  As such we obtain some decreasing sequence of ideals with detection:
  \[
    \ker(q_1)\supset \ker(q_1\cup q_2)\supset \ldots\supset\ker(q_1\cup\ldots\cup q_n)\supset\ldots
  \]
  Their intersection however fails to detect any covariance since
  \[
    \ker\Bigl(\RATIONALS=\{q_1,q_2,\ldots\}\Bigr)=\ker\Bigl(\overline{\{q_1,q_2,\ldots\}}=\REALS\Bigr)=0.
  \]
  As such the given sequence of ideals admits no minimal dilation.\\
  In fact we have even found that \textbf{the axiom of choice fails to apply!}
\end{EXM}

%%%%%%%%%%%%%%%%%%%%%%%%
%%  Acknowledgements  %%
%%%%%%%%%%%%%%%%%%%%%%%%
%%%%%%%%%%%%%%%%%%%%%%%%
%%  ACKNOWLEDGEMENTS  %%
%%%%%%%%%%%%%%%%%%%%%%%%
\section*{Acknowledgements}

The author would like to thank his supervisor S{\o}ren Eilers for his kind support and encouragements,
as well as Evgenios Kakariadis for pointing out to \cite{KATSURA-GAUGE-IDEALS} as a valuable follow-up article by Takeshi Katsura, which inspired the author on finding the missing components.
Moreover, the author acknowledges the support under the Marie–Curie Doctoral~Fellowship~No.~801199.

%%%%%%%%%%%%%%%%
%%  Appendix  %%
%%%%%%%%%%%%%%%%
\appendix

%%%%%%%%%%%%%%%%%%%%
%%  Bibliography  %%
%%%%%%%%%%%%%%%%%%%%
\bibliography{Bibliography}
\bibliographystyle{alpha-all}

\end{document}